\definecolor{dkgreen}{rgb}{0,0.6,0}
\newcommand{\red}[1]{\textcolor{red}{#1}}
\newcommand{\f}{\frac}
\newcommand{\del}{\partial}
\newcommand{\im}{\operatorname{Im}}
\newcommand{\re}{\operatorname{Re}}
\renewcommand{\epsilon}{\varepsilon}
\newcommand{\dist}{\operatorname{dist}}
\newcommand{\bs}{\backslash}
\newcommand{\R}{\mathbb R}
\newcommand{\C}{\mathbb C}
\newcommand{\N}{\mathbb N}
\newcommand{\Z}{\mathbb Z}
\newcommand{\diam}{\operatorname{diam}}
\renewcommand{\i}{\mathrm{i}}
\newcommand{\diag}{\operatorname{diag}}
\newcommand{\cO}{\mathcal{O}}
\newcommand{\cN}{\mathcal{N}}
\newcommand{\cH}{\mathcal{H}}
\newcommand{\cE}{\mathcal{E}}
\newcommand{\cK}{\mathcal{K}}
\newcommand{\cB}{\mathcal{B}}
\newcommand{\cT}{\mathcal{T}}
\newcommand{\cC}{\mathcal{C}}
\newcommand{\cS}{\mathcal{S}}
\newcommand{\cM}{\mathcal{M}}
\newcommand{\cZ}{\mathcal{Z}}
\newcommand{\cI}{\mathcal{I}}
\renewcommand{\d}{\mathrm{d}}
\newcommand{\wto}{\rightharpoonup}
\newcommand{\Mto}{\xrightarrow{M}}
\newcommand{\Esol}{\cE^n_{\mathrm{sol}}}
\newcommand{\Emat}{\cE^n_{\mathrm{mat}}}
\newcommand{\Esum}{\cE^n_{\mathrm{sum}}}
\newcommand{\Om}{\Omega}
\newcommand{\VhND}{V^n_{\mathrm{ND}}(\Om_n)}
\newcommand{\HND}{H^1_{\mathrm{ND}}(\Om)}
\newcommand{\DND}{\Delta_{\mathrm{ND}}^{\Om}}
\DeclarePairedDelimiter{\abs}{\lvert}{\rvert}
\DeclarePairedDelimiter{\norm}{\lVert}{\rVert}
\DeclarePairedDelimiter{\inner}{\langle}{\rangle}
\DeclarePairedDelimiter{\set}{\lbrace}{\rbrace}
\DeclarePairedDelimiter{\br}{(}{)}
\newcommand{\Omout}{\Om_{\mathrm{out}}}
\newcommand{\Min}{M_{\mathrm{in}}}
\newcommand{\hMinn}{\hat{M}^n_{\mathrm{in}}}
\newcommand{\Mout}{M_{\mathrm{out}}}
\newcommand{\intt}{\mathrm{int}}
\newcommand{\cW}{\mathcal{W}}
\newcommand{\cX}{\mathcal{X}}
\numberwithin{equation}{section}
\theoremstyle{definition}
\newtheorem{de}{Definition}[section]
\newtheorem{example}{Example}
\theoremstyle{plain}
\newtheorem{prop}[de]{Proposition}
\newtheorem{lemma}[de]{Lemma}
\newtheorem{theorem}[de]{Theorem}
\newtheorem{assumption}[de]{Assumption}
\theoremstyle{remark}
\newtheorem{remark}[de]{Remark}
\theoremstyle{plain}
\providecommand{\customgenericname}{}
\newcommand{\newcustomtheorem}[2]{%
	\newenvironment{#1}[1]
	{%
		\renewcommand\customgenericname{#2}%
		\renewcommand\theinnercustomgeneric{##1}%
		\innercustomgeneric
	}
	{\endinnercustomgeneric}
}
\pgfplotsset{compat=newest}
\newcommand{\arg@parser}[1]{%
	\advance\arg@count\@ne
	\expandafter\let\csname arg\romannumeral\arg@count\endcsname\comma@entry
}
\newcommand\res[1]{% Default is empty and will be configured later
	% Set default values
	\arg@count=\z@
	\comma@parse{ \lambda,A }\arg@parser % Default values
	% Mandatory arguments
	\arg@count=\z@
	\comma@parse{#1}\arg@parser
	\ifnum\arg@count>2 %
	\@latex@error{Too many arguments}{%
		The macro \string\mycmd\space got \the\arg@count\space
		arguments,\MessageBreak
		but expected are 2 arguments.\MessageBreak
		\@ehd
	}%
	\fi
	% Either using \argi, \argii, \argiii, \argiv, \argv
	% or
	% \@mycmd\argi\argii\argiii\argiv\argv
	% or
	\edef\process@me{%
		\noexpand\@res
		{\etex@unexpanded\expandafter{\argi}}%
		{\etex@unexpanded\expandafter{\argii}}%
	}%
	\process@me
}
\newcommand{\@res}[2]{%
	\ensuremath\left( #1 - #2 \right)^{-1}
}
\newcommand\p[1]{% Default is empty and will be configured later
	% Set default values
	\arg@count=\z@
	\comma@parse{  }\arg@parser % Default values
	% Mandatory arguments
	\arg@count=\z@
	\comma@parse{#1}\arg@parser
	\ifnum\arg@count=2 %
	\else
	\@latex@error{Wrong number of mandatory arguments}{%
		The macro \string\p\space got \the\numexpr\arg@count-2\relax\space
		mandatory arguments,\MessageBreak
		but expected are 3 mandatory arguments.\MessageBreak
		\@ehd
	}%
	\fi
	% Either using \argi, \argii, \argiii, \argiv, \argv
	% or
	% \@mycmd\argi\argii\argiii\argiv\argv
	% or
	\edef\process@me{%
		\noexpand\@p
		{\etex@unexpanded\expandafter{\argi}}%
		{\etex@unexpanded\expandafter{\argii}}%
	}%
	\process@me
}
\newcommand{\@p}[2]{%
	\ensuremath \left\langle #1 , #2 \right\rangle
}
\thanks{The authors would like to thank Marco Marletta for helpful discussions. }
\title{Computing scattering resonances of rough obstacles}
\author{Frank R\"osler}
\email{frank.roesler@posteo.de}
\address{Department of Mathematics, University of Bern, Alpeneggstrasse 22, 3012 Bern, Switzerland}
\author{Alexei Stepanenko}
\email{as3304@cam.ac.uk}
\address{Department of Applied Mathematics and Theoretical Physics, Centre for Mathematical Sciences, Wilberforce Road, Cambridge CB3 0WA, United Kingdom}
\begin{document}

 %\graphicspath{{../}}

\maketitle
\begin{abstract}
	This paper is concerned with the numerical computation of scattering resonances of the Laplacian for Dirichlet obstacles with rough boundary. We prove that under mild geometric assumptions on the obstacle there exists an algorithm whose output is guaranteed to converge to the set of resonances of the problem. 
	The result is formulated using the framework of Solvability Complexity Indices. 
The proof is constructive and provides an efficient numerical method. The algorithm is based on a combination of a Glazman decomposition, a polygonal approximation of the obstacle and a finite element method. Our result applies in particular to obstacles with fractal boundary, such as the Koch Snowflake and certain filled Julia sets.
Finally, we provide numerical experiments in MATLAB for a range of interesting obstacle domains. 
	
\end{abstract}

\section{Introduction}

Scattering resonances of the Laplace operator play a fundamental role in quantum mechanics and acoustics, encoding the behaviour of the wave and free Schr\"odinger equations \cite{resonances}. 
The non-self-adjoint nature of the problem poses challenges for the construction of stable and convergent numerical algorithms. Current methods include complex scaling (as well as the closely related method of perfectly matched layers), boundary integral methods and various modifications of the finite element method.  The study of the foundations of computation for scattering resonances was recently initiated in \cite{seashell,JEMS} within the framework of Solvability Complexity Indices \cite{sci1,sci2,sci3,sci4,sci5}.

Meanwhile, in the context of wave propagation and spectral theory, fractal boundaries are responsible for a variety of striking phenomena. Notably, they are particularly efficient for wave absorption \cite{absorb} and  give rise to exotic eigenvalue asymptotics (see \cite{asym1,asym2,asym3,asym4,asym5} for instance and references therein). 
In our previous article \cite{spectralpaper}, we investigated the computability of eigenvalues of the Dirichlet Laplacian on bounded domains.  Our results revealed that eigenvalues are computable in one limit for a wide class of domains with rough boundaries, however, for large enough classes of domains, with sufficiently irregular boundaries, the problem becomes non-computable.

In this article, we study the numerical computation of scattering resonances for the Laplacian on exterior domains in the plane, that is domains of the form $U^c = \R^2 \bs U$ for closed and bounded $U \subset \R^2$, endowed with Dirichlet boundary conditions.  
This setting is often referred to as \emph{obstacle scattering} and $U$ as the \emph{obstacle}.
The novelty of our article is that we only impose very mild geometric conditions on the boundary of the domain, allowing for a wide variety of fractal boundaries.
Our primary contributions are as follows:
\begin{enumerate}
	\item We introduce a fast and simple numerical algorithm for scattering resonances with rough boundaries. 
	\item We prove convergence of this algorithm (see Theorem \ref{th:main}), from which we deduce an upper bound for the computational complexity of the problem (see Theorem \ref{th:SCI}). Our sole geometric assumption on the domain that its boundary is a union of a finite number of closed Jordan curves with zero area. 
	\item  We perform numerical investigations for scattering resonances for Koch snowflake and filled Julia set obstacles. See Section \ref{sec:numerics}. 
\end{enumerate}

Our algorithm is a modification of one introduced by Levitin and Marletta \cite{LevitinMarletta} (see Remark \ref{rem:lev-marl}).
It is based on domain decomposition, Neumann-to-Dirichlet (NtD) operators, spectral expansions and the finite element method (FEM). This gives rise to several sources of approximation error, including:
\begin{itemize}
	\item Geometric error due to the approximation of rough boundaries by polygonal ones.
	\item FEM discretisation error. 
	\item Truncation error due to approximation of NtD operators by finite matrices. 
\end{itemize} 
These sources of error are estimated and linked in order to ensure convergence in one limit (see Assumption \ref{ass:param}). The proof of our main convergence theorem is based on utilising the notion of Mosco convergence in conjunction with Gohberg-Sigal theory.

Our SCI result (Theorem \ref{th:SCI}) generalises the aforementioned work \cite{seashell}. While the result in \cite{seashell} proves existence of a convergent algorithm for scattering resonances of obstacles with $C^2$ boundary, we deal with a vastly wider class of obstacles. 
Other closely related works include similar algorithms in other settings \cite{hyperbolic,roddick}, the boundary element method for fractal screens \cite{CW21} and shape optimisation for rough domains \cite{shape}.

\section{Overview of results}\label{sec:overview}

In this section, we provide the necessary background for scattering resonances, state our main results and provide details of our numerical method.  

\subsection{Scattering resonances}

Let $U \subset \R^2$ denote a closed, bounded set and consider the Laplacian $H$ on the the exterior domain $U^c = \R^2 \bs U$ endowed with homogeneous Dirichlet boundary conditions on $\partial U$,
\begin{equation*}
	H = - \Delta, \qquad D(H) = \set{u \in H^1_0(U^c): \Delta u \in L^2(U^c)}
\end{equation*}
where $D(H)$ denotes the domain of the operator $H$ and we employ standard notation for Sobolev spaces ($H^1$, $H^1_0$ etc.). 
Let $X > 0$ be large enough so that $U \subset B_{X-1}(0)$ (throughout, $B_r(0)$ shall denotes a ball in $\R^2$ of radius $r > 0$ centred at the origin). For simplicity, we restrict our attention to scattering resonances in $\C_-$. 

 Let $\chi$ be a smooth, compactly supported function such that $\chi \equiv 1$ on $B_X(0)$. Then, by \cite[Theorem 4.4]{resonances}, the analytic operator-valued function
\begin{equation}
	\chi (H - k^2)^{-1} \chi:L^2(\R^2) \to L^2(\R^2), \qquad k \in \C_+,
\end{equation}
admits a meromorphic continuation to $k \in \C \bs (-\infty, 0]$.
\begin{de}\label{def:resonance}
	The \emph{scattering resonances} of $H$ are defined as the poles of $\chi (H - k^2)^{-1} \chi$ in $\C_-$. 
\end{de}

\subsection{Main computational complexity result}\label{subsec:comp}

Consider a computational problem described by the following elements. 
\begin{enumerate}[label = (\Alph*)]
\item Let $\cS$ denote the set of closed, bounded sets $U \subset \mathbb{R}^2$ (representing obstacles) such that the following holds. 
\begin{itemize}
	\item $U$ has a finite number of connected components and is the closure of an open set.
	\item Each connected component of $\partial U$ is path-connected and has zero Lebesgue measure (i.e. zero area).
\end{itemize}
$\cS$ is referred to as the \emph{primary set} and represents the class of admissible obstacles. Examples of elements $U \in \cS$ include the Koch snowflake and certain filled Julia sets (see Section \ref{sec:koch_numerics} and \ref{sec:julia_numerics} respectively).
\item   Define a metric space  $\mathcal{M} := (\mathrm{cl}(\C_-),\d_{\mathrm{AW}})$, where $\mathrm{cl}(\C_-)$ denotes the set of closed, non-empty subsets of $\C_-$ and  $\d_{\mathrm{AW}}$ denotes the
\emph{Attouch-Wets distance}, 
\begin{align}\label{eq:att-wet}
	d_{\text{AW}}(A,B) = \sum_{k=1}^\infty 2^{-k}\min\left\{ 1\,,\,\sup_{|x|<k}\left| \dist(x,A) - \dist(x,B) \right| \right\}.
\end{align} 
Note that if $A,B\subset\R^d$ are bounded, then $d_{\text{AW}}$ is equivalent to the Hausdorff distance.
\item Let $\mathrm{Res}: \cS \to \cM$ denote the map which, for any obstacle $U \in \cS$, gives the corresponding set of scattering resonances in $\C_-$. This is referred to as the \emph{problem function}. 
\item Consider a set of real-valued maps defined by 
\begin{equation}
	\Lambda = \set{U \mapsto \mathbbm{1}_U(x) : x \in \R^2},
\end{equation}
where $\mathbbm{1}_U$ denotes the characteristic function of $U$. 
This is referred to as the \emph{evaluation set} and represents the information available to an algorithm. 
\end{enumerate}

Together, the quadruple $\set{\mathrm{Res}, \cS,\cM, \Lambda}$ formally constitutes a computational problem in the language of Solvability Complexity Indices, which may be summarised as:
\begin{equation*}
	\textrm{Compute the scattering resonances for obstacles in }\cS\textrm{, given access point values of }\mathbbm{1}_U\textrm{.}
\end{equation*} 
Intuitively, an \emph{arithmetic algorithm} for this computational problem is a map $\Gamma: \cS \to \cM$ which produces its output by performing a finite number of arithmetic operations on $\mathbbm{1}_U(x_1)$,...,$\mathbbm{1}_U(x_N)$ for some sample points $x_1,...,x_N \in \R^d$, which may depend on $U$. This notion is formalised as follows. 
\begin{de}[Arithmetic algorithm]\label{def:Algorithm}
	Let $\set{\Xi, \cS,\cM, \Lambda}$ be a computational problem. An arithmetic algorithm is a mapping $\Gamma:\cS\to\mathcal M$ such that for each $U\in\cS$ 
	\begin{enumerate}
		\item[(i)] there exists a finite (non-empty) subset $\Lambda_\Gamma(U)\subset\Lambda$,
		\item[(ii)] the action of $\Gamma$ on $U$ depends only on $\{f(U)\}_{f\in\Lambda_\Gamma(U)}$ and the output is obtained by a finite number of arithmetic operations,
		\item[(iii)] for every $U_2 \in\cS$ with $f(U_2)=f(U_1)$ for all $f\in\Lambda_\Gamma(U_1)$ one has $\Lambda_\Gamma(U_1)=\Lambda_\Gamma(U_2)$.
	\end{enumerate}
\end{de}

Our first result reads as follows. 
\begin{theorem}\label{th:SCI}
There exists a sequence of arithmetic algorithms $\Gamma_n: \cS \to \cM$, $n \in \N$, for the computational problem $\set{\mathrm{Res}, \cS,\cM, \Lambda}$ such that 
\begin{equation}\label{eq:SCI-conv}
	\forall\, U \in \cS: \qquad \d_{\mathrm{AW}}(\Gamma_n(U), \mathrm{Res}(U)) \to 0 \qquad \text{as}\qquad n \to \infty.
\end{equation}
\end{theorem}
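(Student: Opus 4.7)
The plan is to construct the algorithms $\Gamma_n$ by combining the convergent numerical method whose correctness is established in Theorem \ref{th:main} with a grid-sampling procedure that extracts a polygonal approximation of $U$ from finitely many evaluations of $\mathbbm{1}_U$. Since Theorem \ref{th:main} already yields the desired Attouch--Wets convergence for a scheme whose inputs are a polygonal domain together with discretisation parameters, the work to be done is essentially to translate that scheme into an object satisfying Definition \ref{def:Algorithm}.

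For each $n \in \N$, I would fix a uniform grid $G_n$ of spacing $h_n = 2^{-n}$ inside the box $[-n,n]^2$, and query $\mathbbm{1}_U$ at the (finitely many) points of $G_n$. From this binary data one constructs a polygonal obstacle $U_n$, for instance as a union of closed grid squares whose corners lie in $U$, possibly post-processed so that its boundary is a disjoint union of polygonal Jordan curves. The hypotheses defining $\cS$ -- finitely many connected components, each the closure of an open set with path-connected boundary of zero Lebesgue measure -- ensure that the $U_n$ converge to $U$ in the mode required by Theorem \ref{th:main} (in particular that the associated form domains $H^1_0(U_n^c)$ converge to $H^1_0(U^c)$ in the sense of Mosco, as invoked in the proof of Theorem \ref{th:main}). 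With $U_n$ in hand, one then runs the Glazman decomposition, FEM and NtD-truncation scheme with all discretisation parameters chosen as explicit functions of $n$, in accordance with Assumption \ref{ass:param}, so that the geometric, FEM and truncation errors all vanish simultaneously as $n \to \infty$. The resulting $\Gamma_n(U)$ is a finite subset of $\C_-$ produced by finitely many arithmetic operations on the sampled values of $\mathbbm{1}_U$ on $G_n$, and conditions (i)--(iii) of Definition \ref{def:Algorithm} can be checked directly: $\Lambda_{\Gamma_n}(U) := \{U \mapsto \mathbbm{1}_U(x) : x \in G_n\}$ is finite and depends only on $n$, so in particular is independent of $U$, which trivially gives (iii).

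Once this construction is in place, \eqref{eq:SCI-conv} follows from Theorem \ref{th:main}. The main obstacle I anticipate is ensuring that the grid-based reconstruction $U_n$ satisfies the precise geometric hypotheses under which Theorem \ref{th:main} applies: when $\partial U$ is fractal, naive pixelisation can create spurious connected components, bridge distinct parts of the obstacle, or miss thin protrusions, so one must verify carefully -- using the zero-area and Jordan-curve assumptions on each boundary component -- that the polygonal approximants have the right topological structure and that the resulting form-domain approximation is indeed Mosco convergent. A minor technical point is that the output must lie in $\mathrm{cl}(\C_-)$; this is automatic since $\Gamma_n(U)$ is a finite set, provided one augments it with a single fixed auxiliary point whenever the numerical scheme returns no approximate resonances, which does not affect the Attouch--Wets limit.
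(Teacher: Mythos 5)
Your outline correctly assembles the ingredients up to the point where you obtain the analytic function $g_n$ whose zeros converge to $\mathrm{Res}(U)$, but you then assert that ``the resulting $\Gamma_n(U)$ is a finite subset of $\C_-$ produced by finitely many arithmetic operations'' without any justification of how the zero set $\cZ(g_n)$ is to be extracted. This is the genuine gap. Knowing that $\cZ(g_n) \to \mathrm{Res}(U)$ in Attouch--Wets distance (Theorem~\ref{th:main}) does not by itself produce an arithmetic algorithm in the sense of Definition~\ref{def:Algorithm}: the zeros of an analytic function are not finitely computable from point evaluations, and one must output an \emph{approximation} to $\cZ(g_n)$ with a guaranteed a-priori error bound, using only finitely many evaluations of $g_n$. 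The paper devotes all of Section~\ref{subsec:compute-zeros} to precisely this issue: it subdivides a compact exhaustion $\hat{B}_n$ of $\C_-$ into boxes of diameter $\le 2^{-n}$, then for each box applies a discretised argument principle, with explicit Taylor-remainder control of the contour integral $\oint (g_n'/g_n)$ in terms of computable constants $\cC_n$ and lower bounds $L_{n,j}$, in order to decide with certainty whether the box is near a zero or not. Establishing that these error bounds $D(n,j)$ actually tend to zero as $j\to\infty$, in both the zero-on-boundary and no-zero-on-boundary cases, is where the work lies, and your proposal is silent on all of it.

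Two smaller remarks. First, the pixelation scheme and its geometric convergence are not new difficulties to be solved from scratch as your last paragraph suggests: the paper invokes its Example~\ref{ex:pixelation}, which rests on an already-proved Proposition from \cite{spectralpaper}, so citing that resolves the Mosco-convergence concern directly without further topological casework. Second, your proposed pixelation rule (union of squares all of whose \emph{corners} lie in $U$) differs from the paper's (union of squares whose \emph{centres} lie in $U$, cf. \eqref{eq:pixels-defn}); the proof of Hausdorff convergence is tied to the latter convention, so you would need to re-verify Assumption~\ref{ass:Un} for your variant rather than assume it transfers.
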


In the language of SCI, Theorem \ref{th:SCI} may be written 
\begin{equation}
	\set{\mathrm{Res}, \cS,\cM, \Lambda} \in \Delta_2^A,
\end{equation}
where $\Delta_2^A$ refers to the fact that the result states convergence in one limit with arithmetic algorithms.

\subsection{Domain decomposition definition}

Next, we define scattering resonances via an alternative domain decomposition approach, from which our numerical method will naturally follow. 
Decompose $U^c$ into an inner domain, an outer domain and an interface as (see Figure \ref{fig:domain_decomposition})
\begin{equation}
  \label{eq:inner-outer-defn}
  \Om := B_X(0) \bs U,  \qquad \Omout := \R^2 \bs \overline{B_X(0)} \qquad \text{and} \qquad \Gamma := \partial B_X(0).
\end{equation}
Let $\set{e_\alpha}_{\alpha \in \Z}$ denote an orthonormal basis for $L^2(\Gamma)$ defined by
\begin{equation*}
  e_\alpha(\theta) := \frac{1}{\sqrt{2 \pi X}} e^{i \alpha \theta}, \qquad \alpha \in \Z.
\end{equation*}

\begin{figure}[t]
	\centering
	\includegraphics{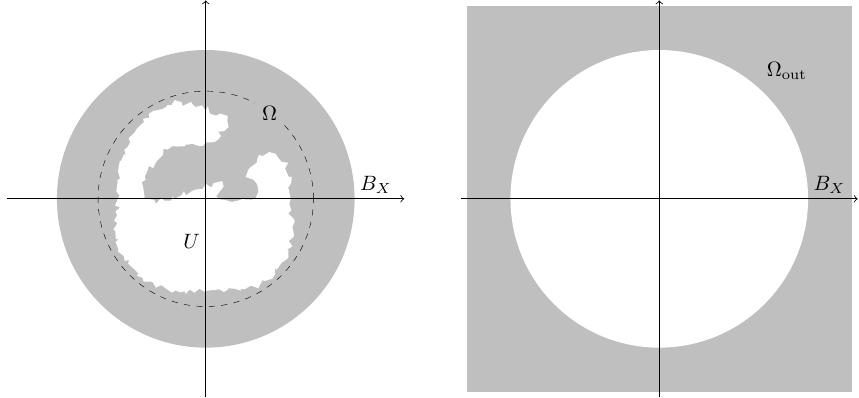}
	\caption{Sketch of the definitions of $\Omega$ and $\Omega_{\text{out}}$. The dashed line in the left figure indicates $B_{X-1}(0)$.}
	\label{fig:domain_decomposition}
\end{figure}

Consider the boundary value problem
\begin{equation}
  \label{eq:BVP-main}
  \begin{cases}
   - \Delta u = k^2 u \quad \text{on} \quad U^c \\
    u|_{\partial U} = 0
  \end{cases}, \qquad k \in \C,
\end{equation}
so that, in particular, $k^2$ is an eigenvalue of $H$ if and only if there exists a non-trivial solution $u$ of \eqref{eq:BVP-main} in $L^2(U^c)$.
Let $H^{(1)}_{n}$ and $H^{(2)}_{n}$ denote the Hankel functions of the first and second kind (of order $n$) respectively. For $k \neq 0$, the general form for solutions of $-\Delta u = k^2 u $ on the outer domain $\Omout$ is
\begin{equation}
  u(x;k) = \sum_{\alpha \in \Z} \br*{q_\alpha H^{(1)}_{|\alpha|}(k |x|) e_{\alpha}(\theta(x)) + \tilde{q}_\alpha H^{(2)}_{|\alpha|}(k |x|) e_{\alpha}(\theta(x)) },
\end{equation}
where $q_\alpha$ and $\tilde{q}_\alpha$ are sequences of complex numbers such that $(q_\alpha H^{(1)}_{|\alpha|}(k |x|))_{\alpha \in \Z}, (\tilde{q}_\alpha H^{(2)}_{|\alpha|}(k |x|))_{\alpha \in \Z} \in \ell^1(\Z)$ and $\theta(x)$ is the angular part of $x$. We select only the ``first kind solutions'' (i.e. the ones with $\tilde{q}_{\alpha} = 0$); these are precisely the set of solutions that are in $L^2(\Omout)$ when $k \in \C_+$ (throughout, $\C_\pm = \set{z \in \C : \pm \im \, z > 0}$).
Furthermore, it is convenient to re-parametrise $q_\alpha$, in order to cancel out the rapid growth of the Hankel functions of first kind for large order,
\begin{equation}\label{eq:Hankel-asym}
H^{(1)}_{\nu}(k X) \sim A_\nu := -i \sqrt{\frac{2}{\pi \nu}} \br*{\frac{e k X}{2 \nu}}^{-\nu} \qquad \text{as} \qquad \nu \to \infty.
\end{equation}
We therefore seek solutions of \eqref{eq:BVP-main} which take the following form on $\Omout$,
\begin{equation}\label{eq:u-target-form}
 u_q(x;k) :=  \sum_{\alpha \in \Z} q_\alpha \frac{1}{A_{|\alpha|}} H^{(1)}_{|\alpha|}(k |x|) e_{\alpha}(\theta(x)), \qquad  q \in \ell^1(\Z),
\end{equation}
where the summability condition on $q$ ensures convergence of the sum.
The solutions $u_q(x;k)$, $x \in \Omout$, are defined  and form analytic functions  for $k \in \C \bs (-\infty, 0]$.
Furthermore, $u_q(\cdot;k)$ are square-integrable if and only if $k \in \C_+$ (in fact, $u_q(\cdot;k)$ is exponentially decaying as $|x|\to \infty$ if $k \in \C_+$ and exponentially growing if $k \in \C_-$).

\begin{de}\label{def:resonance-intuitive}
 We call a number $k \in \C \bs (-\infty,0]$ a \emph{scattering resonance} of $H$ if there exists a non-zero solution of boundary value problem \eqref{eq:BVP-main} taking the form $u_q(\cdot;k)$ in $\Omout$.
\end{de}

\begin{remark}
In particular, the (square roots of) eigenvalues of $H$ correspond to scattering resonances in the upper half plane as per definition \ref{def:resonance-intuitive} (note that $H$ happens not to have any eigenvalues in our setting).
\end{remark}

\begin{remark}
	Definition \ref{def:resonance-intuitive} for scattering resonances can be seen to be equivalent to the definition via meromorphic  continuation of the resolvent (Definition \ref{def:resonance}). Indeed, scattering resonances, as per the definition via meromorphic continuation, are characterised as the points for which there exists a corresponding \emph{resonant state} \cite[Theorem 4.7, Definition 4.8]{resonances}. In turn, resonant states can be shown \cite[Definition 4.9]{resonances} to be characterised as the functions $u \in L^2_{\mathrm{loc}}(U^c)$ with $\chi u \in D(H)$ such that 
	\begin{equation}
		(-\Delta - k^2) u = 0 \qquad \text{on} \qquad U^c, 
	\end{equation}
	and there exists compactly supported $g \in L^2(\R^2)$ such that
	\begin{equation}
		u|_{B_X(0)^c} = R_0(k) g |_{B_X(0)^c},
	\end{equation}
	where $R_0(k)= (- \Delta - k^2)^{-1}$ denotes the analytic continuation of the free resolvent on $L^2(\R^2)$. 
	It can then be readily seen that $u$ is a resonant state corresponding to $k \in \C_- \bs (-\infty, 0]$ if and only if it solves \eqref{eq:BVP-main} and takes the form $u_q(\cdot;k)$ in $\Omout$.

\end{remark}

\subsection{Operator theoretic characterisation}
Next, we express scattering resonances in the lower half plane in terms of a natural interface condition that will form the basis of our numerical method below. Introduce the diagonal operator on $L^2(\Gamma)$,
\begin{align}\label{eq:N_def}
	\cN := \diag(\max\{|\alpha|,1\},\alpha\in\Z).
\end{align}
We define fractional Sobolev spaces on $\Gamma$ as
\begin{equation}
 H^s(\Gamma):= \cN^{-s} L^2(\Gamma), \qquad s \in \R.
\end{equation}

The \emph{inner  NtD operator} $\Min(k)$ is defined as
\begin{equation*}
  \Min(k) g := \gamma_\Gamma u_{\mathrm{in}} \qquad k \in \C_+, \quad g \in H^{-\f12}(\Gamma),
\end{equation*}
where the function $u_{\mathrm{in}} \in \HND :=  \set{u|_{\Om} : u \in H^1_0(U^c)}$ is defined
as the weak solution to the boundary value problem
\begin{equation}
  \label{eq:BVP-in}
  \begin{cases}
   - \Delta u_{\mathrm{in}} = k^2 u_{\mathrm{in}} \quad \text{on} \quad \Om \\
    u_{\mathrm{in}}|_{\partial U} = 0,\quad \partial_\nu|_{\Gamma} u_{\mathrm{in}} = g
  \end{cases}.
\end{equation}
Here, $\gamma_\Gamma$ denotes the trace operator for $\Gamma$.
As is well known, $\Min$ forms an analytic family of bounded operators from $H^{-\f12}(\Gamma)$ to $H^{\f12}(\Gamma)$ on $\C_+$ and admits meromorphic continuation to $\C$ which, in particular, is analytic in $\C_-$.

Furthermore, for $k \in \C \bs (-\infty, 0]$, introduce the diagonal operators
\begin{equation}
 N_1(k) :=  \diag \br*{\frac{1}{A_{|\alpha|}} H^{(1)}_{|\alpha|}(kX),\alpha\in\Z}\quad\text{and}\quad  N_2(k) :=  \diag \br*{\frac{1}{A_{|\alpha|}} (H_{|\alpha|}^{(1)})'(kX),\alpha\in\Z}.
\end{equation}
$N_1(k)$ is a bounded operator from $H^{\f12}(\Gamma)$ to $H^{\f12}(\Gamma)$ whereas, by the large order asymptotics for derivatives of Hankel functions of the first kind,
\begin{equation}
 (H_{\nu}^{(1)})'(kX) \sim - \frac{\nu}{kX} A_\nu \qquad \text{as} \qquad \nu \to \infty,
\end{equation}
$N_2(k)$ is a bounded operator from $H^{\f12}(\Gamma)$ to $H^{-\f12}(\Gamma)$. By definition, we have
\begin{equation}\label{eq:N1-N2-prop}
 N_1(k) q = \gamma_\Gamma u_q(\cdot; k) \qquad \text{and} \qquad N_2(k) q = \partial_\nu|_{\Gamma} u_q(\cdot; k)
\end{equation}

A function $u$ solves the BVP \eqref{eq:BVP-main} if and only if
\begin{equation}
  \label{eq:BVP-out-Min}
  \begin{cases}
   - \Delta u = k^2 u \quad \text{on} \quad \Omout \\
    u|_{\Gamma} = - \Min(k)g ,\quad \partial_\nu|_{\Gamma} u = g
  \end{cases}
\end{equation}
for some $g \in H^{-1/2}(\Gamma)$ hence, in particular, satisfies $u|_{\Gamma} = - \Min(k)\partial_\nu|_{\Gamma} u$.
Therefore, a number $ k \in \C_- $ is a scattering resonance of $H$ (as per Definition \ref{def:resonance-intuitive}) if and only if
\begin{equation}\label{eq:H12-op-resonance-cond}
 \exists\, q \in H^{\f12}(\Gamma): \qquad (N_1(k)+ \Min(k) N_2(k))q = 0.
\end{equation}
We may convert the operator in \eqref{eq:H12-op-resonance-cond} to an equivalent operator on $L^2(\Gamma)$ by bordering with appropriate powers of $\cN$,
\begin{equation}\label{eq:T-defn}
T(k) := \frac{1}{2}\cN^{\f12} (N_1(k)+ \Min(k) N_2(k)) \cN^{-\f12}, \qquad k \in \C_-.
\end{equation}
Note that in Appendix \ref{app:decomp}, we prove that $T(k)$ is Fredholm of index zero.
 We arrive at our operator theoretic characterisation for scattering resonances of $H$.
 
\begin{lemma}\label{def:resonance-operator}
 A number $k \in \C_-$ is a scattering resonance of $H$ if $\ker \, T(k) \neq \set{0}$.
\end{lemma}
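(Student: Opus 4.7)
The plan is to reduce the statement to the equivalence established in equation \eqref{eq:H12-op-resonance-cond}, which already characterises scattering resonances in $\C_-$ as those $k$ for which the operator $N_1(k)+\Min(k)N_2(k):H^{\f12}(\Gamma)\to H^{\f12}(\Gamma)$ has a non-trivial kernel. Since $T(k)$ is obtained from this operator simply by bordering with powers of $\cN$, the whole task is to verify that the bordering preserves (non-)triviality of the kernel.

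First, I would record the mapping properties. By construction, $\cN$ is positive self-adjoint on $L^2(\Gamma)$ with $\cN\ge 1$, and the fractional powers $\cN^{\pm\f12}$ provide isometric isomorphisms $\cN^{-s/2}\colon L^2(\Gamma)\to H^{s/2}(\Gamma)$ for any real $s$, by the very definition $H^s(\Gamma):=\cN^{-s}L^2(\Gamma)$. In particular $\cN^{-\f12}\colon L^2(\Gamma)\to H^{\f12}(\Gamma)$ is a bijection, and $\cN^{\f12}$ is injective on its domain. With these isomorphisms in hand, the identity
\begin{equation*}
T(k)=\tfrac12\,\cN^{\f12}\bigl(N_1(k)+\Min(k)N_2(k)\bigr)\cN^{-\f12}
\end{equation*}
exhibits $T(k)$ on $L^2(\Gamma)$ as similar (via the isomorphism $\cN^{\f12}$, up to the harmless factor $\tfrac12$) to $N_1(k)+\Min(k)N_2(k)$ acting on $H^{\f12}(\Gamma)$.

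Next, I would make the kernel correspondence explicit in both directions. Given $\tilde q\in\ker T(k)\subset L^2(\Gamma)$ with $\tilde q\ne 0$, set $q:=\cN^{-\f12}\tilde q\in H^{\f12}(\Gamma)$; this $q$ is non-zero because $\cN^{-\f12}$ is a bijection, and applying $\cN^{-\f12}$ (which is injective) to $2\,T(k)\tilde q=0$ yields $(N_1(k)+\Min(k)N_2(k))q=0$. Conversely, given a non-zero $q\in H^{\f12}(\Gamma)$ annihilated by $N_1(k)+\Min(k)N_2(k)$, the vector $\tilde q:=\cN^{\f12}q\in L^2(\Gamma)$ is non-zero and satisfies $T(k)\tilde q=0$ by direct substitution. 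Thus $\ker T(k)\ne\{0\}$ in $L^2(\Gamma)$ if and only if $\ker\bigl(N_1(k)+\Min(k)N_2(k)\bigr)\ne\{0\}$ in $H^{\f12}(\Gamma)$.

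Combining this equivalence with the characterisation \eqref{eq:H12-op-resonance-cond} established just above the statement of the lemma (itself a reformulation of Definition \ref{def:resonance-intuitive} via the boundary value problem \eqref{eq:BVP-out-Min} on $\Omout$ and the definition of $\Min(k)$) completes the proof. No substantive obstacle arises: the only point requiring care is that the bordering operators $\cN^{\pm\f12}$ really do map between the correct Sobolev scales, which is immediate from the definition of $H^s(\Gamma)$ in terms of $\cN$. The Fredholm index zero property of $T(k)$ referenced in the paragraph before the lemma is not needed here; it will be used elsewhere (e.g.\ when invoking Gohberg--Sigal theory) to guarantee that the scattering resonances form a discrete set.
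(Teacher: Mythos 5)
Your proof is correct and makes explicit exactly the reasoning the paper leaves implicit: the lemma follows immediately from the characterisation \eqref{eq:H12-op-resonance-cond} together with the observation that conjugation by the bijection $\cN^{\pm\f12}$ transports kernels isomorphically between $H^{\f12}(\Gamma)$ and $L^2(\Gamma)$. The paper does not spell this out, so your write-up is a faithful and slightly more detailed version of the same argument.
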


\subsection{Algorithm for computing scattering resonances}\label{subsec:num-method}

We shall numerically compute scattering resonances by approximating the operator $T(k)$ by a matrix $T_n(k)$ with computable matrix elements.
The index $n$ controls the accuracy of the computation and we anticipate convergence as $n \to \infty$.
There are several simultaneous processes that take place, each giving rise to sources of error.
\begin{itemize}
 \item Finite truncation of the infinite matrix $T(k)$ (expressed in the basis $\set{e_\alpha}$).
 \item Polygonal approximation of the obstacle $U$.
 \item Finite element approximation (FEM) of the inner NtD map $\Min(k)$.
 \item The FEM approximation of $\Min(k)$ is itself efficiently approximated by a truncated eigenfunction expansion and a trick known as \emph{Aitken's acceleration}.
\end{itemize}
Linking these different approximations in a way that ensures convergence is non-trivial and is clarified by the assumptions of our main result Theorem \ref{th:main} below as well as numerical experiments in Section \ref{sec:numerics}.

\subsubsection*{Matrix elements and truncation}
Fix a sequence of parameters $N_n \in \N$, $n \in \N$ which control the  number basis elements taken in matrix truncation.
Consider the orthogonal projection
\begin{equation}\label{eq:Pn}
P_n : L^2(\Gamma) \to \mathrm{span}_\C\set{e_{-N_n},...,e_{N_n}}.
\end{equation}
The inner NtD operator may be expressed as infinite matrices with matrix elements
\begin{equation}
 a_{\alpha\beta}(k) := \br{\Min(k)e_\alpha,e_\beta }_{L^2(\Gamma)},\qquad k \in \C_-, \quad \alpha, \beta \in \Z.
\end{equation}
Then, $P_n T(k) P_n$ is a $(2N_n + 1) \times (2N_n + 1) $ matrix with matrix elements
\begin{equation}\label{eq:T-ab}
T_{\alpha \beta} (k) = \frac{\delta_{\alpha \beta}}{2 A_{|\alpha|}} H^{(1)}_{|\alpha|}(kX) + a_{\alpha \beta}(k) \frac{\max\set{|\alpha|,1}^{-\f12} \max\set{|\beta|,1}^{\f12}}{2A_{|\alpha|}} \br*{ H^{(1)}_{|\alpha|-1}(kX) - \frac{|\alpha|}{kX}H^{(1)}_{|\alpha|}(kX)}
\end{equation}
where $\delta_{\alpha \beta}$ denotes the Kronecker delta symbol and we used the formula
\begin{equation}
 (H^{(1)}_{\nu})'(z) = H^{(1)}_{\nu - 1}(z) - \frac{\nu}{z} H^{(1)}_{\nu}(z).
\end{equation}
The Hankel functions in $T_{\alpha \beta}(k)$ may be efficiently numerically computed using known methods, however, the computation of $a_{\alpha \beta}(k)$ requires more work.

\subsubsection*{Geometric approximation}
Let $U_n\subset B_{X-1}(0)$, $n \in \N,$ denote a sequence of closed, bounded sets with polygonal boundaries approximating the obstacle $U$. Let $B^n_X \subset B_X(0)$ be a sequence of convex, polygonal domains approximating $B_X(0)$  in the sense that
\begin{equation}
  B_X(0) = \bigcup_{n=1}^\infty B_X^n,
\end{equation}
such that the corners of $B^n_X$ lie on $\Gamma$.
Polygonal approximations for the inner domain and interface may then be defined as
\begin{equation}
  \Om_n := B_X^n \bs U_n \qquad \text{and} \qquad \Gamma_n := \partial B_X^n, \qquad n \in \N.
\end{equation}

\subsubsection*{FEM approximation at a fixed point}
 For each $n \in \N$, let $\cT_n$ be a triangulation of $\Om_n$ and consider the P1 finite element spaces
 \begin{align}
  \label{eq:Vh-defn}
  V^n(\Om_n) & := \set*{u \in C(\Om_n): u|_T \text{ affine for all }T \in \cT_n}, \\
\label{eq:VhND-defn}
  \VhND & := \set*{u \in V^n(\Om_n): u|_{\partial U_n} = 0},
\end{align}
where $C(\Om_n)$ denotes the space of continuous, complex-valued functions on $\Om_n$. The inner NtD may be approximated at a fixed point $k_0 \in \C_-$ by an operator $\hMinn(k_0)$ defined by
\begin{equation}
 \hMinn(k_0)g = \gamma_{\Gamma_n} u_{\mathrm{in}}^n, \qquad g \in P_n H^{-\f12} (\Gamma)
\end{equation}
where $u^n_{\mathrm{in}}$ is the FEM approximation of the BVP \eqref{eq:BVP-in} for $u_{\mathrm{in}}$ on the mesh $\cT_n$, that is,
\begin{equation}
\label{eq:u-in-n-defn}
u_{\mathrm{in}}^n \in \VhND:\qquad \inner{\nabla u^n_{\mathrm{in}}, \nabla \phi}_{L^2(\Om_n)} = k_0^2 \inner{ u^n_{\mathrm{in}}, \phi}_{L^2(\Om_n)} + \br{\hat \Pi_n g,\phi|_{\Gamma_n}}_{L^2(\Gamma_n)}, \qquad\forall \phi \in \VhND,
\end{equation}
where $\hat \Pi_n$ is a linear interpolation operator taking continuous functions on $\Gamma$ to piecewise affine functions on $\Gamma_n$ (see \eqref{eq:hat-Pi} for a precise definition).
Let $\hat{a}_{\alpha \beta}^{n}(k)$ be the approximation for $a_{\alpha \beta}(k)$ defined by
\begin{equation}
 \hat{a}_{\alpha \beta}^{n}(k) := (\hMinn(k)e_\alpha, \hat{\Pi}_n  e_\beta)_{L^2(\Gamma_n)}.
\end{equation}

\subsubsection*{Eigenfunction expansion}Performing a FEM computation for every spectral parameter $k$ that we wish to test would be extremely expensive, hence we express $\hat{a}_{\alpha \beta}^{n}(k)$ in terms of an eigenfunction expansion.
This gives an expression which is explicit in $k$ hence only a single FEM computation need to be performed. Since there are a very large amount terms in the eigenfunction expansion, we make a further approximation by truncating the sum.

Let $- \DND$ denote the Laplacian on $L^2(\Om)$ endowed with homogeneous Dirichlet boundary conditions on $\del U$
and homogeneous Neumann boundary conditions on $\Gamma$.
Consider the FEM approximations for the eigenvalues and eigenfunctions (respectively) of $-\DND$ on the mesh $\cT_n$, that is, solutions of
\begin{equation}\label{eq:fem-eigs}
\mu_m^n \in \R \textrm{ and } w_m^n \in \VhND: \qquad
\begin{cases}
   \inner{\nabla w_m^n, \nabla \phi}_{L^2(\Om_n)} = \mu_m^n  \inner{w_m^n,\phi}_{L^2(\Om_n)}, \qquad \forall \phi \in \VhND \\
   \norm{w_m^n}_{L^2(\Om_n)} = 1
\end{cases},
\end{equation}
where $m$ runs from $1$ to $d_n := \dim \VhND$.

Fix a sequence of parameters $J_n \in \set{1,...,d_n}$, $n \in \N$, controlling the number of elements in the eigenfunction expansion sum approximation.
As shown in Lemma \ref{lem:expansion-for-S_n}, the approximations $ \hat{a}_{\alpha \beta}^{n}(k)$ may be expressed in terms of an eigenfunction expansion,
\begin{align}
 \hat{a}_{\alpha \beta}^{n}(k) & = \sum_{m=1}^{d_n} \frac{1}{\mu_m^n - k^2} (\hat \Pi_n  g, \gamma_{\Gamma_n}w_m^n)_{L^2(\Gamma_n)} (\gamma_{\Gamma_n} w^n_m, \hat\Pi_n e_\beta)_{L^2(\Gamma_n)} \nonumber \\
  & = \hat{a}_{\alpha \beta}^{n}(k_0)  + \sum_{m=1}^{d_n} \frac{k^2 - k_0^2}{(\mu^n_m - k^2)(\mu^n_m - k_0^2)}(\hat \Pi_n e_\alpha,\gamma_{\Gamma_n} w^n_m)_{L^2(\Gamma_n)}(\gamma_{\Gamma_n} w^n_m, \hat\Pi_n e_\beta)_{L^2(\Gamma_n)}
\end{align}
where the second line follows by a simple computation.
The sum in the second line converges faster than the first hence is more amenable to approximation.
Let $a_{\alpha \beta}^{n}(k)$ be the approximation for $a_{\alpha \beta}(k)$ obtained by truncating the sum in the above expression
\begin{equation}
\label{eq:a-n-matrix-elements}
a^n_{\alpha\beta}(k) := \hat{a}_{\alpha \beta}^{n}(k_0)  + \sum_{m=1}^{J_n} \frac{k^2 - k_0^2}{(\mu^n_m - k^2)(\mu^n_m - k_0^2)}(\hat \Pi_n e_\alpha,\gamma_{\Gamma_n} w^n_m)_{L^2(\Gamma_n)}(\gamma_{\Gamma_n} w^n_m, \hat\Pi_n e_\beta)_{L^2(\Gamma_n)}.
\end{equation}
We denote by $P_n \Min^n(k) P_n$ the $(2N_n + 1) \times (2N_n + 1)$ matrix with matrix elements $a^n_{\alpha \beta}(k)$, i.e.,
\begin{equation}
\label{eq:Minn-defn}
 \br{\Min^n(k)e_\alpha,e_\beta}_{L^2(\Gamma)} = a^n_{\alpha\beta}(k), \qquad k \in \C_-, \quad \alpha, \beta \in \set{-N_n,...,N_n}.
\end{equation}

\subsubsection*{Numerical approximation of resonances}

We approximate the operator $T(k)$ by the matrices
\begin{equation}\label{eq:T-n-k-defn}
T_n(k) :=  \frac{1}{2} P_n \cN^{\f12} (N_1(k)+ \Min^n(k) N_2(k)) \cN^{-\f12}  P_n, \qquad  n \in \N, \quad k \in \C_-.
\end{equation}
In other words, we truncate $T(k)$ to a finite matrix and replace  $a_{\alpha \beta}(k)$ by $a^n_{\alpha \beta}(k)$ in expression \eqref{eq:T-ab} for the matrix elements. The points $k$ where $\ker T_n(k) \neq \emptyset$ coincide with the zeros of the function
\begin{equation}
g_n(k) := \det T_n(k), \qquad n \in \N,  \quad k \in \C_-.
\end{equation}
The function $g_n$ are explicitly defined and analytic on $\C_-$.

\begin{center}
\fbox{\parbox{14cm}{\centering The zeros of the function $g_n(k)$, serve as numerical approximations for the scattering resonances of $H$.
}}
\end{center}
\noindent

\begin{remark}\label{rem:lev-marl}
	The algorithm presented in \cite{LevitinMarletta} consists in essentially the same approximation procedure applied to the operator $\Min(k) + \Mout(k)$ instead of $N_1(k) + \Min(k) N_2(k)$, where $\Mout(k)$ is the meromorphic continuation of the NtD operator for the Laplacian on $\R^2 \bs B_X(0)$. The advantage of our approach is that we approximate scattering resonances as zeros of an analytic function, whereas the corresponding function \cite{LevitinMarletta} has poles in general, arising due to the poles of $\Mout(k)$. 
\end{remark}

\subsection{Main convergence result}\label{subsec:main-conv}

Our sole geometric assumption on the obstacle  $U \subset \R^2$ is the following.
\begin{assumption}\label{ass:U}
$U$ belongs to the set $\cS$ described in Section \ref{subsec:comp}
\end{assumption}
Furthermore, the approximations $U_n$ and $B_X^n$, $n \in \N$, for the obstacle and the ball must converge geometrically in the following sense.
Examples of admissible approximations are given in Section \ref{subsec:examples}.  

\begin{assumption}\label{ass:Un}
 We have
\begin{equation}\label{eq:Un-conv}
  \dist_H(\partial U_n, \partial U) + \dist_H(B_X(0) \bs U_n, B_X(0) \bs U) \to 0 \quad \text{as} \quad n \to \infty,
\end{equation}
where $\dist_H$ denotes the Hausdorff distance.

\end{assumption}

The following standard assumption for the triangulation $\cT_n$ shall be supposed. Let $|e|$ denote the length of an edge $e$ and $|T|$ denote the area of an element $T$.
  \begin{assumption}[Shape regularity]\label{ass:shape-reg}
    There exists a constant $C_\theta > 0$ independent of $n$ such that for any element $T \in \cT_n$
    and any edge $e \subset T$
    \begin{equation}
     \frac{1}{C_\theta}\leq \frac{|T|^{1/2}}{|e|} \leq C_\theta.
    \end{equation}
  \end{assumption}

  Denote
  \begin{equation}
    h_n := \max_{T \in \cT_n} \diam(T), \qquad n \in \N.
  \end{equation}
  In order to ensure convergence, we need to balance the parameters $N_n$, $h_n$ and $J_n$ in the limit $n \to \infty$.
  \begin{assumption}\label{ass:param}
    The following limits hold
    \begin{equation*}
     \lim_{n \to \infty} N_n = \infty, \qquad  \lim_{n \to \infty} h_n N^{\f54}_n = 0  \qquad \text{and} \qquad \lim_{n \to \infty} J_n N_n^{-2} = \infty.
    \end{equation*}
  \end{assumption}

  \begin{theorem}\label{th:main}
    Suppose that Assumptions \ref{ass:U}-\ref{ass:param} hold.
    \begin{enumerate}[label=(\alph*)]
    \item
     For any resonance $k \in \C_-$ of $H$, there exists $n_0 \in \N$ and a sequence of zeros $k_n\in \C_-$ of $g_n$, $n \geq n_0$, such that $k_n \to k$ as $n \to \infty.$
    \item
    For any bounded, open subset $D \subset \C_-$ such that $\overline{D}$ does not contain any resonances of $H$, there exists $n_0 \in \N$ such that
    $D$ does not contain any zeros of $g_n$ for any $n \geq n_0$.
\end{enumerate}
  \end{theorem}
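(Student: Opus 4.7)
The overall plan is to reduce the theorem to the Gohberg-Sigal stability theory for analytic families of Fredholm operators. By Lemma \ref{def:resonance-operator} and the fact (established in Appendix \ref{app:decomp}) that $T(k)$ is an analytic Fredholm family of index zero on $\C_-$, the resonances in $\C_-$ are exactly the characteristic values of $T$, while the zeros of $g_n(k) = \det T_n(k)$ are the characteristic values of the finite-rank approximation $T_n(k)$ (extended by zero outside $\mathrm{Ran}\,P_n$). Since the mixed Dirichlet-Neumann Laplacian $-\DND$ is self-adjoint with non-negative spectrum, $M_{\mathrm{in}}(k)$ is itself analytic on $\C_-$, so there is no ambiguity in extending $T$ meromorphically. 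The Gohberg-Sigal theorem then states that if $T_n(k) \to T(k)$ in operator norm uniformly on compact subsets of $\C_-$ and $T$ is invertible somewhere in $\C_-$, then the algebraic multiplicities of characteristic values inside any small disc $D$ are preserved in the limit, yielding both (a) (existence of nearby zeros of $g_n$ near any resonance) and (b) (absence of spurious zeros in resolvent regions). The theorem therefore reduces to proving operator-norm convergence $T_n(k) \to T(k)$ locally uniformly on $\C_-$.

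The convergence $T_n(k) \to T(k)$ decomposes into three error channels that are balanced by Assumption \ref{ass:param}. First, \emph{matrix truncation}: since $N_1(k)$ and $N_2(k)$ are diagonal in the basis $\set{e_\alpha}$, the compression $P_n T(k) P_n$ introduces only a tail controlled by the smoothing $M_{\mathrm{in}}(k): H^{-1/2}(\Gamma) \to H^{1/2}(\Gamma)$ together with the $\cN^{\pm 1/2}$ bordering in \eqref{eq:T-defn}, and this tail vanishes as $N_n \to \infty$. Second, \emph{geometric plus FEM error}: the step $\hMinn(k) \to M_{\mathrm{in}}(k)$ would be proved by first establishing Mosco convergence of the mixed Dirichlet-Neumann quadratic forms on $\Om_n$ to the corresponding form on $\Om$; this relies crucially on Assumption \ref{ass:U} (Jordan components of zero Lebesgue measure), which ensures that the $H^1_0$-capacity of $U_n$ converges to that of $U$ under the Hausdorff approximation of Assumption \ref{ass:Un}. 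A standard P1 FEM error estimate then contributes a factor of order $h_n |\alpha|^{5/4}$ when applied to boundary data $e_\alpha$, so taking the supremum over $|\alpha| \le N_n$ gives precisely the exponent in Assumption \ref{ass:param}. Third, \emph{eigenfunction truncation}: the tail of the sum in \eqref{eq:a-n-matrix-elements} beyond $J_n$ is controlled by Weyl's law on the convex polygon $\Om_n$ (uniform in $n$ under Mosco convergence) together with a trace bound on $\Gamma_n$, and is driven to zero by $J_n N_n^{-2} \to \infty$.

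Combining the three estimates yields the required operator-norm convergence $T_n(k) \to T(k)$ uniformly on compact subsets of $\C_-$, after which the analytic Fredholm theorem together with the Gohberg-Sigal stability result gives (a) and (b). The main obstacle I anticipate lies in the geometric step: one must upgrade the $L^2$-convergence of solutions to \eqref{eq:BVP-in} (which Mosco convergence delivers natively on a domain with possibly fractal Dirichlet boundary) to an $H^{1/2}(\Gamma)$-level convergence of their traces on the smooth interface $\Gamma$, since the subsequent bordering by $\cN^{1/2}$ amplifies any loss of regularity. The fact that $\Gamma$ is separated from $\partial U$ by a positive distance (recall $U \subset B_{X-1}(0)$) should supply the interior elliptic regularity needed to perform this upgrade; keeping the associated constants uniform in $n$ under the rough boundary approximation is the technically delicate point.
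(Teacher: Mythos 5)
Your plan is essentially the paper's proof. The paper reformulates the theorem as a statement about zeros of the analytic Fredholm family $T(k) = I + K(k)$ and its finite-rank approximant $\tilde T_n(k)$, reduces everything to locally uniform operator-norm convergence $\tilde T_n(k)\to T(k)$ via a generalized Hurwitz/Rouch\'e argument from Gohberg--Sigal theory (Lemmas \ref{lem:hurwitz} and \ref{lem:point-to-uniform}), and then splits the norm difference into exactly your three error channels $\Emat$, $\Esol$, $\Esum$ (Lemma \ref{lem:initial}) with Mosco convergence, boundary interpolation estimates, trace bounds and Weyl's law carrying the load, as you anticipate.

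Two details in your write-up need correcting before the argument actually closes. First, you say the finite-rank operator $T_n(k)$ should be ``extended by zero outside $\mathrm{Ran}\,P_n$''; that choice would give an operator with infinite-dimensional kernel at every $k$ and would not be an $I$-plus-compact perturbation, so Gohberg--Sigal would be inapplicable. The correct auxiliary object, as the paper takes it, is $\tilde T_n(k) = (I - P_n) + T_n(k)$, i.e.\ extension by the \emph{identity}, which shares the zeros of $T_n$ and remains of the form $I + K_n(k)$. Second, you attribute the constraint $h_n N_n^{5/4}\to 0$ to a ``standard P1 FEM error estimate.'' On a domain with rough boundary there is no such rate: the paper only proves qualitative convergence of the FEM solution operator (Proposition \ref{prop:mosco-to-conv}, via Mosco compactness, no rate). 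The exponent $5/4$ instead comes from the \emph{boundary interpolation} estimate (Lemma \ref{lem:interpolation}, the factor $h_n^2 N_n^{2+s}$ with $s=\tfrac12$), which controls both the sum-truncation error (Proposition \ref{prop:E-sum}) and the interpolation side of the finite-element error. Your anticipated difficulty --- upgrading $L^2$-level Mosco convergence to $H^{1/2}(\Gamma)$-level convergence of traces --- is precisely addressed in the paper by a uniform-in-$n$ trace estimate (Lemma \ref{lem:trace-uniform}), built from the extension operators $E_n$ and the boundary bijections $\hat\iota_n$, rather than by interior elliptic regularity directly.
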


In Section \ref{sec:SCI-proof}, we use this result in conjunction with an algorithm for computing zeros of analytic functions with a-priori error control (developed in Section \ref{subsec:compute-zeros}) to prove Theorem \ref{th:SCI}. 
We reformulate Theorem \ref{th:main} in a slightly more general and precise way in Theorem \ref{th:main}', after introducing Mosco convergence and the notion of multiplicity for scattering resonances.

\begin{figure}[htbp]
	\centering
	\includegraphics[width=0.3\textwidth]{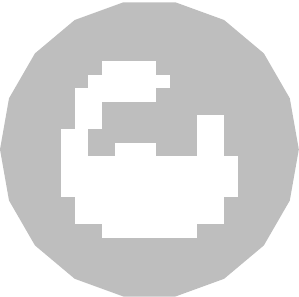}
	\hspace{2cm}
	\includegraphics[width=0.3\textwidth]{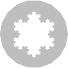}
	\caption{Sketch of a pixelation approximation (left) and pre-fractal approximation (right) of $\Omega$.}
	\label{fig:pixelation}
\end{figure}

\subsection{Examples}\label{subsec:examples}
  In \cite{spectralpaper}, we studied the following general approximation scheme, which is able to produce easy-to-triangulate approximations of $U$,
converging in the sense of Assumption \ref{ass:Un}, provided we have access to the information of whether a given point $x$ lies inside $U$.

\begin{example}[Pixelated domains]\label{ex:pixelation}
Consider an obstacle $U$ satisfying Assumption \ref{ass:U},
Define the \emph{pixelation approximation} of $U$ by (see Figure \ref{fig:pixelation})
\begin{equation}\label{eq:pixels-defn}
 U_n := \bigcup_{j \in L_n} (j + [-\tfrac{1}{2n},\tfrac{1}{2n}]^2), \qquad n \in \N,
\end{equation}
where
\begin{equation}\label{eq:Ln}
L_n := \set{j \in (\tfrac{1}{n}\Z)^2 : j \in U}.
\end{equation}
Then, by \cite[Proposition 4.14]{spectralpaper},  Assumption \ref{ass:Un} for $U_n$ holds true.
\end{example}

In addition, many fractals are naturally defined as the Hausdorrf limit of a sequence of polygonal ``pre-fractal'' sets.
In the case where $\partial U$ is such a fractal, one may use these pre-fractals to construct $U_n$. As a simple example, consider the Koch snowflake.
\begin{example}[Koch snowflake]
Consider the case that the obstacle $U$ is the Koch snowflake, which is defined as the Hausdorrf limit of a sequence of pre-fractals $U_n$, as illustrated in Figure \ref{fig:pixelation}. The boundary of the Koch snowflake is path-connected since it naturally inherits a parametrisation from the boundaries of the pre-fractals, hence Assumption \ref{ass:U} is satisfied.
Furthermore, Assumption \ref{ass:Un} for $U_n$ is satisfied, essentially, by construction.
\end{example}

\subsection{Structure of paper} Our paper is organised as follows:
\vspace{3pt}
\noindent
\emph{Section \ref{sec:prelim}:}  Mosco convergence and Gohberg-Sigal theory are introduced, then slightly more general version of our main result is stated.

\vspace{3pt}
\noindent
\emph{Section \ref{sec:main-conv}:} Theorem \ref{th:main} is proved.

\vspace{3pt}
\noindent
\emph{Section \ref{sec:SCI-proof}:} Theorem \ref{th:SCI} is proved.

\vspace{3pt}
\noindent
\emph{Section \ref{sec:numerics}:} An implementation of the numerical method is described with figures illustrating the scattering resonances of fractal obstacles. 

\vspace{3pt}
\noindent
\emph{Appendix \ref{app:decomp}:} Some necessary properties of the operator $T(k)$ are proved.

\vspace{3pt}
\noindent
Throughout, $C > 0$ shall denote a positive constant that may change from line to line and whose dependence shall be indicated throughout unless specified otherwise (e.g. $C_{s,X}(k)$ depends only on $s$, $X$ and $k$).

\section{Preliminaries and reformulation of the result}\label{sec:prelim}

In this section, we collect the necessary tools we require from Mosco convergence and Gohberg-Sigal theory.  Following this we shall reformulate the main result slightly and reduce its proof of Proposition \ref{prop:Tn}, (which essentially states the locally uniform operator norm convergence of $T_n(k)$ to $T(k)$.
\subsection{Mosco convergence}
The notion of Mosco convergence for Hilbert space plays a central role in our analysis.

\begin{de}\label{def:mosco}
  Let $\cW$ and $\cW_n$, $n \in \N$, be closed subspaces of a Hilbert space $\cH$.
  We say that $\cW_n$ converges to $\cW$ in the Mosco sense as $n \to \infty$, denoted by $\cW_n \Mto \cW$ as $n \to \infty$, if the following holds:
  \begin{enumerate}[label=(\roman*)]
  \item
   For every  $u \in \cW$, there exists $u_n \in \cW_n$, $n \in \N$, such that $\norm{u_n - u}_{\cH} \to 0$ as $n \to \infty$.
\item
   For every subsequence $(\cW_{n_j})_{j \in \N}$ of $(\cW_n)_{n \in \N}$, and every sequence $u_{n_j} \in \cW_{n_j}$ with $u_{n_j} \wto u$
  as $j \to \infty$ for some $u \in \cH$, we have $u \in \cW$.
\end{enumerate}
\end{de}

In \cite{spectralpaper}, we studied the case
\begin{equation}
  \label{eq:spectral-paper}
  \cH = H^1(\R^2), \quad \cW = H^1_0(\cO), \quad \cW_n = H^1_0(\cO_n),
\end{equation}
where $\cO \subset \R^2$ and $\cO \subset \R^2$, $n \in \N$, are bounded domains.

\begin{theorem}[{\cite[Theorem 2.3]{spectralpaper}}]
\label{th:spectral-paper}
If
\begin{enumerate}[label=(\roman*)]
\item $\cO$ is topologically regular (i.e., $\cO = \intt(\overline{\cO})$),
\item $\partial \cO$ has zero Lebesgue measure and a finite number of separated, path-connected components,
\item $\partial \cO_n$ is locally connected for all $n \in \N$ with
  \begin{equation}
    \label{eq:dist_H-cO}
    \dist_H(\cO,\cO_n) + \dist_H(\partial \cO, \partial \cO_n) \to 0 \quad \text{as} \quad n \to \infty,
  \end{equation}
\end{enumerate}
then we have
\begin{equation*}
  H^1_0(\cO_n) \Mto H^1_0(\cO) \quad \text{as} \quad n \to \infty.
\end{equation*}

\end{theorem}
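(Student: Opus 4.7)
The plan is to verify the two Mosco conditions (i) and (ii) separately, with the Hausdorff convergence hypotheses doing the work for (i), and a stability/approximation argument using the regularity of $\cO$ doing the work for (ii).

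\textbf{Recovery sequence (i).} By density of $C_c^\infty(\cO)$ in $H^1_0(\cO)$ and a diagonal argument, it suffices to construct the recovery sequence for test functions $\phi \in C_c^\infty(\cO)$. For such $\phi$, the set $K := \supp(\phi)$ is compact in $\cO$ with $\delta := \dist(K, \partial \cO) > 0$. For $n$ large, $\dist_H(\partial \cO, \partial \cO_n) < \delta/2$, so every $x \in K$ satisfies $\dist(x, \partial \cO_n) \geq \delta/2$, meaning the ball $B(x, \delta/2)$ avoids $\partial \cO_n$ and thus lies in a single connected component of $\R^2 \setminus \partial \cO_n$. The convergence $\dist_H(\cO_n, \cO) \to 0$ places a point of $\cO_n$ inside $B(x,\delta/2)$ for $n$ large, forcing $B(x,\delta/2)\subset \cO_n$. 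Hence $K \subset \cO_n$ and one may take $u_n := \phi \in H^1_0(\cO_n)$.

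\textbf{Closure under weak limits (ii).} Let $u_{n_j} \in H^1_0(\cO_{n_j})$ (extended by zero) with $u_{n_j} \wto u$ in $H^1(\R^2)$. By Rellich-Kondrachov, $u_{n_j} \to u$ strongly in $L^2_{\mathrm{loc}}(\R^2)$. For any open set $V$ with $\overline{V} \subset \R^2 \setminus \overline{\cO}$, the condition $\dist_H(\cO_n, \cO) \to 0$ gives $V \cap \cO_n = \emptyset$ for $n$ large, so $u_{n_j}|_V \equiv 0$ and hence $u|_V = 0$. Exhausting $\R^2 \setminus \overline{\cO}$ by such $V$ and using $|\partial \cO|=0$ yields $u = 0$ a.e.\ on $\cO^c$. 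It then remains to upgrade this to $u \in H^1_0(\cO)$. The argument is a cutoff: approximate $u$ by $u_\epsilon(x) := u(x)\,\eta(\dist(x,\partial\cO)/\epsilon)$ with $\eta\in C^\infty(\R)$, $\eta\equiv 0$ near $0$ and $\eta\equiv 1$ on $[2,\infty)$. Then $u_\epsilon \in H^1_0(\cO)$ and one checks $u_\epsilon \to u$ in $H^1(\R^2)$: the $L^2$ error is controlled by $|\{\dist(\cdot,\partial\cO)<2\epsilon\}|\to 0$ (since $|\partial\cO|=0$); the gradient error contains a dangerous term of size $\epsilon^{-2}\int_{\{\epsilon<\dist<2\epsilon\}}|u|^2$, which is estimated via a capacitary inequality exploiting the fact that each of the finitely many path-connected components of $\partial \cO$ has positive diameter, and hence positive logarithmic capacity, so that $u$ (as an $H^1$ function vanishing in a quasi-everywhere sense on $\partial \cO$) admits Poincar\'e-type control near $\partial \cO$.

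\textbf{Main obstacle.} Part (i) is essentially a geometric exercise; the substance lies in part (ii), and specifically in the step that promotes a.e.\ vanishing on $\cO^c$ to membership in $H^1_0(\cO)$. The measure-zero condition on $\partial \cO$ alone is insufficient for this ``stability'' property: one can construct thin, dust-like boundaries across which functions in $H^1(\R^2)$ vanishing on $\cO^c$ fail to lie in $H^1_0(\cO)$. It is precisely the hypothesis that $\partial \cO$ consists of finitely many separated, path-connected components, together with the topological regularity of $\cO$, that provides the capacitary lower bound needed to rule out such pathologies and close the argument.
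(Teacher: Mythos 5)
Your recovery-sequence argument (i) and the first half of (ii) (deducing $u=0$ a.e.\ on $\cO^c$ from $\dist_H(\cO_n,\cO)\to 0$ and $|\partial\cO|=0$) are correct. The genuine gap is in the decisive step of (ii), and it is circular: to control the term $\eps^{-2}\int_{\{\eps<\dist(\cdot,\partial\cO)<2\eps\}}|u|^2$ you invoke a capacitary Poincar\'e inequality for ``$u$, as an $H^1$ function vanishing in a quasi-everywhere sense on $\partial\cO$''. But quasi-everywhere vanishing on $\partial\cO$ is not available: from $u=0$ a.e.\ on $\cO^c$ one only gets q.e.\ vanishing on the open exterior $\intt(\cO^c)$, and upgrading this to q.e.\ vanishing on $\partial\cO$ is precisely the stability issue you yourself flag as the ``main obstacle''. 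Worse, if you did know $\tilde u=0$ q.e.\ on all of $\cO^c$, you would be finished at once by the standard characterisation $u\in H^1_0(\cO)\iff \tilde u=0$ q.e.\ on $\cO^c$, so the cutoff/Hardy machinery would be superfluous; as written, the proof assumes what it must prove.

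There is also a strategic problem: after the first step your argument discards the approximating domains entirely and attempts to show that hypotheses (i)--(ii) alone make $\cO$ stable, i.e.\ that every $u\in H^1(\R^2)$ vanishing a.e.\ on $\cO^c$ lies in $H^1_0(\cO)$. The hypotheses do not supply the local capacity density such an argument needs: positive logarithmic capacity of each whole boundary component is a global statement, and domains satisfying (i)--(ii) can have complements that are capacitarily thin near a non-polar portion of $\partial\cO$ (comb-like complements whose teeth are super-exponentially thin are compatible with topological regularity, finitely many path-connected boundary components and zero area), so the unconditional stability claim cannot be expected to follow from (i)--(ii). Note that your part (ii) never uses the local connectedness of $\partial\cO_n$ nor $\dist_H(\partial\cO,\partial\cO_n)\to 0$, which is a red flag: these are exactly what allow the quantitative vanishing to be transferred from the approximants. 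The workable route (and, in essence, the one taken in the cited proof of \cite[Theorem 2.3]{spectralpaper}, which the present paper only quotes) is to apply a Maz'ya-type capacitary Poincar\'e inequality to the functions $u_{n_j}$ themselves: near every $x\in\partial\cO$ at scale $\eps$ there is, for large $n$, a connected piece of $\cO_n^c$ of diameter comparable to $\eps$ (Hausdorff closeness of the boundaries plus the connectedness assumptions), hence of capacity $\gtrsim\eps$, on which $u_{n_j}$ vanishes; this yields $\int_{\{\dist(\cdot,\partial\cO)<\eps\}}|u_{n_j}|^2\le C\eps^2\,\|\nabla u_{n_j}\|^2_{L^2}$ uniformly in $j$, a bound that survives the weak limit and then makes your cutoff argument close for $u$. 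Without some such transfer from the approximating domains, the step from a.e.\ vanishing on $\cO^c$ to $u\in H^1_0(\cO)$ is unsupported.
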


An application of this result to the setting in the present paper  gives the following result. This is straightforward to verify, however, for the convenience of the reader we provide a proof.
\begin{lemma}
\label{lem:geom-to-mosco}
   Assumptions \ref{ass:U} and \ref{ass:Un} imply that
  \begin{equation*}
    H^1_0(U_n^c) \Mto H^1_0(U^c) \quad \text{as} \quad n \to \infty.
  \end{equation*}
\end{lemma}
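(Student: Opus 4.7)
The plan is to reduce the lemma to Theorem~\ref{th:spectral-paper} (the bounded-domain case) via a smooth cutoff that localises everything to the ball $B_X(0)$, where all the geometric activity takes place since $U, U_n \subset B_{X-1}(0)$. Throughout, I view $H^1_0(U^c)$ and $H^1_0(U_n^c)$ as closed subspaces of $\cH = H^1(\R^2)$ by extension by zero across $\partial U$ and $\partial U_n$ respectively. Fix $\chi \in C_c^\infty(\R^2)$ with $\chi \equiv 1$ on $\overline{B_{X-1}(0)}$ and $\supp \chi \subset B_X(0)$; then for any $v \in H^1(\R^2)$, the function $(1-\chi)v$ vanishes on $U$ and on every $U_n$.

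For condition~(i) of Definition~\ref{def:mosco}, given $u \in H^1_0(U^c)$, I would decompose $u = \chi u + (1-\chi)u$. The inner part satisfies $\chi u \in H^1_0(B_X(0) \setminus U)$ because it vanishes on $\partial B_X(0)$ (where $\chi = 0$) and on $U$ (where $u = 0$). Applying Theorem~\ref{th:spectral-paper} to $\cO := B_X(0) \setminus U$ and $\cO_n := B_X(0) \setminus U_n$ produces $v_n \in H^1_0(B_X(0) \setminus U_n)$ with $v_n \to \chi u$ strongly in $H^1(\R^2)$. Setting $u_n := v_n + (1-\chi)u$, the trace on $\partial U_n$ vanishes since $\chi \equiv 1$ on the neighbourhood $B_{X-1}(0) \supset U_n$, so $u_n \in H^1_0(U_n^c)$ and $u_n \to u$ in $H^1$.

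For condition~(ii), if $u_{n_j} \in H^1_0(U_{n_j}^c)$ with $u_{n_j} \wto u$ in $H^1(\R^2)$, then multiplication by the smooth compactly supported $\chi$ is weak-to-weak continuous, so $\chi u_{n_j} \wto \chi u$, with $\chi u_{n_j} \in H^1_0(B_X(0) \setminus U_{n_j})$. Applying condition~(ii) of Theorem~\ref{th:spectral-paper} yields $\chi u \in H^1_0(B_X(0) \setminus U)$, and since $\chi \equiv 1$ on $U$, this forces $u = 0$ on $U$, i.e.\ $u \in H^1_0(U^c)$.

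The remaining task is to verify the hypotheses of Theorem~\ref{th:spectral-paper} for $\cO$ and $\cO_n$: topological regularity of $\cO = B_X(0) \setminus U$ follows from $U = \overline{\intt U}$ (Assumption~\ref{ass:U}) combined with the separation $U \subset B_{X-1}(0)$; the boundary $\partial \cO = \partial B_X(0) \sqcup \partial U$ has zero area and finitely many path-connected components by Assumption~\ref{ass:U}; local connectedness of $\partial \cO_n$ holds because $\partial U_n$ is polygonal; and the Hausdorff convergence of $\cO_n$ and of $\partial \cO_n$ is exactly Assumption~\ref{ass:Un}, the $\partial B_X(0)$ piece being identical on both sides. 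I do not anticipate a serious obstacle, as the positive distance $\dist(U, \partial B_X(0)) > 0$ ensures the cutoff argument is clean and the two boundary components never interact.
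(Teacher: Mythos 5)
Your argument is essentially the paper's own proof: the same cutoff/partition-of-unity reduction (with $\chi_1=\chi$, $\chi_2=1-\chi$) of both Mosco conditions to Theorem~\ref{th:spectral-paper} applied to $B_X(0)\setminus U$ and $B_X(0)\setminus U_n$, and your verification of that theorem's hypotheses is fine. One caution on the last step of condition~(ii): the inference ``$\chi\equiv 1$ on $U$ forces $u=0$ on $U$, i.e.\ $u\in H^1_0(U^c)$'' is not valid verbatim for rough $\partial U$ (vanishing a.e.\ on $U$ does not in general give membership in $H^1_0(U^c)$); instead conclude, as the paper does, by writing $u=\chi u+(1-\chi)u$, where $\chi u\in H^1_0(B_X(0)\setminus U)\subset H^1_0(U^c)$ by zero extension and $(1-\chi)u$ vanishes on a neighbourhood of $U$, hence lies in $H^1_0(U^c)$ -- a one-line fix using exactly the decomposition you already set up.
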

\begin{proof}
  By Theorem \ref{th:spectral-paper}, under Assumptions \ref{ass:U} and \ref{ass:Un}, we have
  \begin{equation}\label{eq:H10-BX}
    H^1_0(B_X \bs U_n) \Mto H^1_0(B_X \bs U) \quad \text{as} \quad n \to \infty.
  \end{equation}
  Let $(\chi_1$, $\chi_2)$ be a partition of unity for the open cover $(B_X$, $\R^2 \bs\overline{B}_{X-1})$ of $\R^2$.
  We then necessarily have $\chi_1 \equiv 1$ on $\overline{B}_{X-1}$.
  Focusing on Mosco convergence condition (i), let $u \in H^1_0(U^c)$.
  Then there exists $\tilde{u}_n \in H^1_0(B_x \bs U_n^c)$ such that $\tilde{u}_n \to \chi_1 u$ in $H^1$,
  hence $u_n := \tilde{u}_n + \chi_2 u \to \chi_1 u + \chi_2 u = u$ in $H^1$.
  Focusing on  condition (ii), let $u_{n_j} \in H^1_0(U_{n_j}^c)$, $j \in \N$, such that $u_{n_j} \wto u$ in $H^1$ for some $u \in H^1(\R^2)$.
  Then $\chi_1u_{n_j} \wto \chi_1 u$ in $H^1$ so $\chi_1 u \in H^1_0(B_X \bs U)$. Consequently, $u = \chi_1 u + \chi_2 u \in H^1_0(U^c)$.
\end{proof}

The following lemma shall be used in Lemma \ref{lem:fem-mosco} to establish Mosco convergence properties for $H^1_{\textrm{ND}}$ spaces and the associated finite element spaces.
\begin{lemma}[{\cite[Lemma 2.4]{CW21}}]\label{lem:W_n+}
  Let $\cW$ and $\cW_n$, $n \in \N$, be closed subspaces of a Hilbert space $\cH$. Suppose that the following holds:
  \begin{enumerate}[label=(\roman*)]
  \item There exists a dense subspace $\tilde{\cW} \subset \cW$ such that for every $u \in \tilde{\cW}$, there exists a sequence $u_n \in \cW_n$,
    $n \in \N$, with $\norm{u_n - u}_\cH \to 0$.
  \item There exists a sequence of closed subspaces $\cW_n^+$ of $\cH$ such that $\cW_n \subset \cW_n^+$ for all $n$ and $\cW_n^+ \Mto \cW$.
  \end{enumerate}
  Then, $\cW_n \Mto \cW$ as $n \to \infty$.
\end{lemma}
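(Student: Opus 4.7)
The plan is to verify directly the two conditions of Mosco convergence in Definition \ref{def:mosco} for the pair $\cW_n, \cW$. The first condition will be obtained from hypothesis (i) by a density-plus-diagonal argument, and the second, being a weak-closure type condition, will follow from hypothesis (ii) together with the inclusion $\cW_n \subset \cW_n^+$.

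For the strong approximation condition, I would fix an arbitrary $u \in \cW$ and use density of $\tilde{\cW}$ in $\cW$ to select $\tilde{u}_k \in \tilde{\cW}$ with $\|\tilde{u}_k - u\|_\cH < 1/k$. Hypothesis (i) then supplies, for each $k$, a sequence $(u_n^{(k)})_{n \in \N} \subset \cW_n$ such that $\|u_n^{(k)} - \tilde{u}_k\|_\cH \to 0$ as $n \to \infty$. A standard diagonal extraction — choose indices $n_1 < n_2 < \dots$ so that $\|u_n^{(k)} - \tilde{u}_k\|_\cH < 1/k$ whenever $n \geq n_k$, and define $u_n := u_n^{(k)}$ on the block $n_k \leq n < n_{k+1}$ — yields $u_n \in \cW_n$. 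The triangle inequality gives $\|u_n - u\|_\cH \leq \|u_n^{(k)} - \tilde{u}_k\|_\cH + \|\tilde{u}_k - u\|_\cH \to 0$ as $n \to \infty$, establishing Definition \ref{def:mosco}(i).

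For the weak lower-limit condition, let a subsequence $(\cW_{n_j})_{j \in \N}$ and a sequence $u_{n_j} \in \cW_{n_j}$ with $u_{n_j} \wto u$ in $\cH$ be given. The inclusion $\cW_{n_j} \subset \cW_{n_j}^+$ places each $u_{n_j}$ inside $\cW_{n_j}^+$. Applying condition (ii) of Definition \ref{def:mosco} to the Mosco convergence $\cW_n^+ \Mto \cW$ furnished by hypothesis (ii), with the same subsequence and the same vectors $u_{n_j}$, immediately yields $u \in \cW$. This verifies Definition \ref{def:mosco}(ii) for $\cW_n$.

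I do not anticipate any real obstacle here. The only moving part is the diagonal selection in the first condition, which is entirely routine; the second condition is essentially immediate from monotonicity of the weak lower-limit condition under enlarging the approximating spaces. The role of $\cW_n^+$ is precisely to act as a superset of $\cW_n$ whose weak-closure property is already available, so that one need not reprove it from scratch for $\cW_n$ itself.
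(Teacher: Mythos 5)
Your argument is correct. Both halves are handled appropriately: condition (i) of Definition \ref{def:mosco} via a standard density-plus-diagonalisation step (where, as you note, one only needs to define $u_n$ arbitrarily for $n < n_1$, which is harmless), and condition (ii) by observing it is monotone under enlarging the approximating spaces, so $\cW_n \subset \cW_n^+$ together with $\cW_n^+ \Mto \cW$ gives it immediately. The paper itself does not prove this lemma but cites it from \cite[Lemma 2.4]{CW21}; the proof there follows the same diagonal route, so there is nothing to compare differently.
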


\subsection{Gohberg-Sigal theory}\label{subsec:goh-sig}
Let $\cX$ be a Banach space and let $\Sigma \subset \C$ be open.
Let $K : \Sigma \to \cB(\cX)$ be an analytic operator-valued function such that $K(z)$ is compact for every $z \in \Sigma$.
Then $A(z) := I + K(z)$ is Fredholm of index zero.
Furthermore, assume there exists $\tilde{z} \in \Sigma$ such that $A(\tilde{z})$ is invertible.
Then by the analytic Fredholm theorem, $z \mapsto A(z)^{-1}$ is a meromorphic operator-valued function on $\Sigma$ with poles of finite rank.
These pole exactly coincide with the zeros of $A$, in the sense of the following  definition.
\begin{de}\label{def:zeros}
  Let $\cX$ be a Banach space and $\Sigma \subset \C$ be open.
  The \emph{zeros} of an analytic operator-valued function $A:\Sigma \to \cB(\cX)$, are defined as the points $z \in \Sigma$ such $\ker A(z) \neq \set{0}$.
\end{de}
The following factorisation theorem is at the heart of Gohberg-Sigal theory and allows us to define a notion of multiplicity to zeros of operator-valued functions. Note that we only state a simplified version here.

\begin{theorem}[{\cite[Th. 1.8]{ammari2009layer}}]\label{th:factorisation}
  Let $A$ be as above. Then for any $z_0 \in \Sigma$, there exists:
  \begin{enumerate}[label=(\alph*)]
  \item analytic operator-valued functions $W_1,W_2: \Sigma \to \cB(\cX)$, invertible near $z_0$,
  \item mutually disjoint projection operators $P_0,...,P_M$ with
    \begin{equation*}
      \mathrm{rank} P_j = 1 \quad \text{for} \quad 1 \leq j \leq M  \quad \text{and} \quad P_0 + \sum_{j=1}^M P_j = I
    \end{equation*}
  \item positive integers $l_1,...,l_M$,
  \end{enumerate}
  such that
  \begin{equation*}
    A(z) = W_1(z) \br*{P_0 + \sum_{j=1}^M (z-z_0)^{l_j}P_j} W_2(z).
  \end{equation*}
\end{theorem}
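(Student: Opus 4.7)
The plan is to prove the theorem by reducing to a finite-dimensional problem and applying a Smith-type normal form for analytic matrix germs. Since $A(z_0) = I + K(z_0)$ with $K(z_0)$ compact, $A(z_0)$ is Fredholm of index zero, so both $\ker A(z_0)$ and a complement of $\mathrm{Ran}\,A(z_0)$ are finite-dimensional. I would first choose a direct sum decomposition $\cX = \cX_1 \oplus \cX_2$ with $\cX_1$ finite-dimensional and containing $\ker A(z_0)$, and with $A(z_0)$ mapping $\cX_2$ isomorphically onto a closed complement of $A(z_0)\cX_1$. Writing $A(z)$ as a $2\times 2$ block operator matrix with respect to this splitting, the $(2,2)$-block $A_{22}(z)$ is invertible at $z_0$ and hence, by continuity and analyticity, on a neighbourhood of $z_0$.

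Next, I would apply a Schur-complement-style reduction: multiply $A(z)$ on the left and right by the block-triangular analytic operator functions
\begin{equation*}
L(z) = \begin{pmatrix} I & -A_{12}(z) A_{22}(z)^{-1} \\ 0 & I \end{pmatrix}, \qquad R(z) = \begin{pmatrix} I & 0 \\ -A_{22}(z)^{-1} A_{21}(z) & I \end{pmatrix},
\end{equation*}
which are analytic and invertible near $z_0$. The product $L(z)\,A(z)\,R(z)$ then takes the block-diagonal form $\mathrm{diag}(S(z), A_{22}(z))$, where $S(z) : \cX_1 \to \cX_1$ is the Schur complement, a finite-dimensional analytic matrix germ vanishing on $\ker A(z_0)$. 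This isolates all nontrivial behaviour into the finite-dimensional corner and incorporates the "invertible part" $A_{22}(z)$ into what will eventually be absorbed by $W_1$ or $W_2$.

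The central step is to factor the finite-dimensional analytic matrix germ $S(z)$ into the desired diagonal form. Here I would use that the ring of germs of holomorphic functions at $z_0$ is a discrete valuation ring with uniformizer $(z-z_0)$, so every nonzero germ equals a unit times $(z-z_0)^l$ for some $l \geq 0$. An adaptation of the classical Smith normal form algorithm, executed via elementary row and column operations (swaps, addition of analytic multiples of one row/column to another, and multiplication by unit germs), produces analytic invertible matrix germs $U_1(z), U_2(z)$ such that
\begin{equation*}
U_1(z)\, S(z)\, U_2(z) = \mathrm{diag}\bigl((z-z_0)^{l_1}, \ldots, (z-z_0)^{l_M}, 1, \ldots, 1\bigr),
\end{equation*}
with positive integers $l_j$ counting the vanishing orders of the elementary divisors of $S$ at $z_0$. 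Each diagonal entry $(z-z_0)^{l_j}$ corresponds naturally to a rank-$1$ projection $P_j$ onto the $j$-th coordinate line in $\cX_1$, while $P_0 := I - \sum_{j=1}^M P_j$ collects the remaining identity entries together with the $A_{22}$-block (extending the matrix operations by the identity on $\cX_2$ when lifting back to $\cX$). Setting $W_1(z) := L(z)^{-1} \, \mathrm{diag}(U_1(z)^{-1}, I)$ and $W_2(z) := \mathrm{diag}(U_2(z)^{-1}, A_{22}(z)) \, R(z)^{-1}$ (or some bookkeeping variant) yields the claimed factorisation.

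The main obstacle will be the finite-dimensional Smith-form step: although the algorithm is standard, one must verify carefully that at each stage the elementary operation can be realised by an analytic, invertible matrix germ, that the iterative process terminates (this follows from the well-ordering of vanishing orders and the fact that each reduction strictly decreases a suitable dimension or valuation invariant), and that the resulting exponents $l_j$ are genuinely positive integers rather than $\infty$ (equivalently, that $S(z)$ is not identically singular near $z_0$, which is precisely the hypothesis that $A$ is invertible somewhere on $\Sigma$, lifted to $S$ via the analytic Fredholm theorem). Once these points are checked, the factorisation and rank-one structure of the $P_j$ follow directly.
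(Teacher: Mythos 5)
The paper does not prove this result; it is quoted verbatim from \cite[Th.~1.8]{ammari2009layer} (which itself reproduces the classical Gohberg--Sigal factorisation). Your sketch reconstructs the standard proof of that theorem, and the main architecture --- reduce to a finite-dimensional corner via a Fredholm block splitting and a Schur-complement congruence, then apply the Smith normal form over the discrete valuation ring of holomorphic germs at $z_0$, then reassemble --- is exactly the classical route.

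There is, however, a genuine error in the opening splitting step as you have written it. You ask for a single decomposition $\cX = \cX_1 \oplus \cX_2$, with $\cX_1$ finite-dimensional containing $\ker A(z_0)$, such that $A(z_0)$ maps $\cX_2$ isomorphically onto ``a closed complement of $A(z_0)\cX_1$.'' If that complement is taken in $\cX$, the requirement is impossible whenever $\ker A(z_0) \neq \{0\}$: both $A(z_0)\cX_1$ and $A(z_0)\cX_2$ lie inside $\ran A(z_0)$, which is then a proper closed subspace of $\cX$, so they cannot together span $\cX$. The correct setup uses two \emph{different} decompositions --- a domain splitting $\cX = \cX_1 \oplus \cX_2$ with $\cX_1 = \ker A(z_0)$, and a codomain splitting $\cX = \cY_1 \oplus \ran A(z_0)$ with $\cY_1$ a finite-dimensional complement of the same dimension as $\cX_1$ (index zero). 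Relative to \emph{these} two splittings, $A_{11}(z_0)$, $A_{12}(z_0)$, $A_{21}(z_0)$ all vanish and $A_{22}(z_0)$ is invertible, so the Schur-complement congruence you wrote produces $\diag(S(z),A_{22}(z))$ with $S(z_0)=0$. One then needs to fix, once and for all, an isomorphism $\cY_1 \cong \cX_1$ and fold it into $W_1$, so that the Smith-form diagonal factor becomes a map of $\cX$ to itself and the $P_j$ live on a common space as the statement requires. With this correction the remainder of your argument --- analytic Smith normal form, positivity of the $l_j$ from $S(z_0)=0$, finiteness from the analytic Fredholm theorem, and absorption of $A_{22}(z)$ and the triangular factors into $W_1,W_2$ --- is sound, though it only yields $W_1, W_2$ analytic and invertible on a neighbourhood of $z_0$; this local version is what Gohberg--Sigal actually gives and is what the paper uses.
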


\begin{remark} Note that the above result follows from \cite{ammari2009layer} because, in the language used there, $z_0$ is a normal point of $A$
  (cf. \cite[Section 1.1.4]{ammari2009layer})

\end{remark}

\begin{de}\label{def:mult}
  Suppose that $A$ satisfies the hypotheses of Theorem \ref{th:factorisation}.
  The \emph{null multiplicity} of a zero $z_0 \in \Sigma$ of $A$ is defined as $l_1 + \cdots +l_M$.
\end{de}

A simplified version of the generalised Rouch\'e's theorem for operator-valued functions reads as follows.
\begin{theorem}[Generalised Rouch\'e's theorem {\cite[Th. 1.15]{ammari2009layer}}]\label{th:gen:Rouche}
  Let $A_j = I + K_j: \Sigma \to \cB(\cX)$, $j = 1 , 2$, both satisfy the hypotheses of Theorem \ref{th:factorisation}.
  Let $D \Subset \Sigma$ be a simply connected open set with $C^1$ boundary $\partial D$ on which neither $A_1$ nor $A_2$ has any zeros.
  If
  \begin{equation*}
    \norm{A_1(z)^{-1}\br*{A_1(z) - A_2(z)}} < 1 \quad \text{for all} \quad  z \in \partial D,
  \end{equation*}
  then the number of zeros of $A_1$ and $A_2$ in $D$ coincide, counting null multiplicities.
\end{theorem}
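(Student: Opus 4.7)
The plan is to run a homotopy argument patterned on the classical scalar Rouch\'e theorem, with the counting of zeros interpreted via the Gohberg-Sigal null multiplicities of Definition \ref{def:mult}. Set
\begin{equation*}
A_t(z) := A_1(z) + t\bigl(A_2(z) - A_1(z)\bigr), \qquad t \in [0,1], \quad z \in \Sigma,
\end{equation*}
interpolating from $A_1$ at $t=0$ to $A_2$ at $t=1$. Each $A_t$ equals $I$ plus the compact-operator-valued analytic function $(1-t)K_1 + tK_2$, so Theorem \ref{th:factorisation} applies to $A_t$ for every $t \in [0,1]$. This yields a well-defined total zero count $N(A_t; D)$, defined as the sum of null multiplicities over all zeros of $A_t$ inside $D$, once $A_t$ is shown to have no zeros on $\partial D$.

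The first step is to verify joint invertibility of $A_t(z)$ on $[0,1] \times \partial D$. Factor $A_t = A_1\bigl(I - t\, A_1^{-1}(A_1 - A_2)\bigr)$, using invertibility of $A_1$ on $\partial D$ from the hypotheses. The assumption of the theorem, together with continuity on the compact set $\partial D$, gives
\begin{equation*}
\bigl\| t\, A_1(z)^{-1}\bigl(A_1(z) - A_2(z)\bigr) \bigr\| \leq \bigl\| A_1(z)^{-1}\bigl(A_1(z) - A_2(z)\bigr) \bigr\| < 1
\end{equation*}
uniformly for $(t,z) \in [0,1] \times \partial D$. A Neumann series thus produces $A_t(z)^{-1}$ jointly continuous in $(t,z)$ on $[0,1] \times \partial D$, and in particular $A_t$ has no zeros on $\partial D$ for any $t \in [0,1]$.

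The second step is to express $N(A_t; D)$ via the generalised argument principle
\begin{equation*}
N(A_t; D) = \frac{1}{2\pi \i}\, \mathrm{tr} \oint_{\partial D} A_t'(z)\, A_t(z)^{-1}\, dz,
\end{equation*}
where the prime denotes differentiation with respect to $z$. Joint continuity of $A_t(z)^{-1}$ on $\partial D$ renders the right-hand side continuous in $t \in [0,1]$; being integer-valued, it must be constant in $t$, which gives $N(A_1; D) = N(A_2; D)$ as required.

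The main obstacle is justifying the argument-principle formula, since $A_t'(z) A_t(z)^{-1}$ is only compact-operator-valued (not trace class a priori) and so the trace must be interpreted with care. The resolution, at the technical core of Gohberg-Sigal theory, is to apply Theorem \ref{th:factorisation} locally near each zero $z_0 \in D$ to write $A_t(z) = W_1(z)\bigl(P_0 + \sum_{j=1}^M (z-z_0)^{l_j} P_j\bigr) W_2(z)$ and to compute the residue of the logarithmic derivative: only the finite-rank singular part $P_1 + \cdots + P_M$ contributes, producing exactly $l_1 + \cdots + l_M$ at each zero. Summing these local contributions over the finitely many zeros of $A_t$ in $D$, and checking that the resulting counting function is well-defined and invariant under the homotopy, completes the argument.
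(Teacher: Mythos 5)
The paper itself does not prove this theorem: it is quoted verbatim, with citation to Ammari--Kang \cite[Th.\ 1.15]{ammari2009layer}, and the only ``proof'' content in the paper is the subsequent remark explaining that the hypotheses of the cited theorem (normality of $A_1$ with respect to $\partial D$, analyticity of $A_1 - A_2$ near $\overline{D}$) are met. Your homotopy-plus-argument-principle route is a recognised alternative way to establish generalised Rouch\'e-type results, and the first step --- factoring $A_t = A_1\bigl(I - t\,A_1^{-1}(A_1-A_2)\bigr)$ and using a Neumann series uniformly on $[0,1]\times\partial D$ to get joint invertibility on the boundary --- is correct and clean.

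However, there is a genuine gap in the crucial continuity step, and you have effectively asserted through it. You claim that joint continuity of $A_t(z)^{-1}$ on $\partial D$ ``renders the right-hand side continuous in $t$'' and then invoke integer-valuedness. But the map from finite-rank (or compact) operators to their traces is \emph{not} continuous with respect to the operator norm once ranks are allowed to be unbounded: for instance $B_m = m^{-1}\sum_{j=1}^{m^2} e_j\otimes e_j$ satisfies $\norm{B_m}\to 0$ while $\operatorname{tr} B_m = m \to\infty$. So norm-continuity of $t\mapsto\oint_{\partial D} A_t'(z)A_t(z)^{-1}\,dz$ does not by itself give continuity of its trace, and that is precisely where the work lies. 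Making the homotopy argument rigorous requires an additional ingredient --- a uniform rank bound on the contour integral, or a localisation to small circles around each zero of $A_{t_0}$ where a perturbative Rouch\'e applies --- and the latter in particular is itself a version of Rouch\'e's theorem, so the reduction is not as self-contained as your write-up suggests. You do flag this region as ``the technical core'' and list it as still to be checked, which is honest, but the sentence asserting continuity should not be stated as established; it is the single point at which the proof is incomplete. The cited reference avoids this difficulty by instead factoring $A_2 = A_1\cdot(A_1^{-1}A_2)$ and applying a multiplicativity theorem for the operator logarithmic residue, a route that sidesteps the trace-continuity issue entirely.
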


\begin{remark}
  Note that the above result follows from \cite{ammari2009layer} because $A_1 - A_2$ is analytic on an open neighbourhood of $\overline{D}$ and,
  in the language used there, $A_1$ is normal with respect to $\partial D$ (cf. \cite[Section 1.3.1]{ammari2009layer}).
\end{remark}

The following generalisation of Hurwitz's theorem follows from the above Rouch\'e's theorem and shall be applied in the proof of our main result.

\begin{lemma}\label{lem:hurwitz}
Let  $A = I + K: \Sigma \to \cB(\cX)$, and  $A_n = I + K_n: \Sigma \to \cB(
)$, $n \in \N$, all satisfy the hypotheses of Theorem \ref{th:factorisation}. If we have
\begin{equation}\label{eq:hur-loc-uni}
 \norm{A(z) - A_n(z)} \to 0 \quad \text{as} \quad n \to \infty \qquad \text{locally uniformly for} \qquad z \in \Sigma,
\end{equation}
then the following holds.
\begin{enumerate}[label = (\alph*)]
 \item For any zero $w$ of $A$ with null multiplicity $\nu$, there exists $n_0 \in \N$ and $\nu$ sequences of zeros $w^{(1)}_n, ..., w^{(\nu)}_n$, $n \geq n_0$, of $A_n$ such that $w^{(j)}_n \to w$ as $n \to \infty$ for all $j \in \set{1,...,\nu}$
 \item For any bounded, open subset $D \Subset \Sigma $ not containing any zeros of $A$, there exists $n_0 \in \N$ such that $A_n$ has no zeros in $D$ for all $n \geq n_0$.
\end{enumerate}

\end{lemma}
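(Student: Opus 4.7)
The strategy is to apply the generalised Rouch\'e theorem (Theorem \ref{th:gen:Rouche}) to the pair $(A,A_n)$ on suitably chosen contours, using the local uniform convergence \eqref{eq:hur-loc-uni} to verify its hypothesis. The key preliminary observation is as follows: since each $A_n$ and $A$ satisfy the hypotheses of Theorem \ref{th:factorisation}, the analytic Fredholm theorem yields that $A^{-1}$ is meromorphic on $\Sigma$ with poles precisely at the zeros of $A$, so in particular the zero set of $A$ is discrete in $\Sigma$. Consequently, on any compact subset $K\subset\Sigma$ containing no zeros of $A$, $A^{-1}$ is analytic in a neighbourhood of $K$ and hence $C_K:=\sup_{z\in K}\|A(z)^{-1}\|<\infty$. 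Combining this with \eqref{eq:hur-loc-uni} gives
\begin{equation*}
\sup_{z\in K}\,\bigl\|A(z)^{-1}\bigl(A(z)-A_n(z)\bigr)\bigr\|\ \longrightarrow\ 0 \qquad\text{as}\qquad n\to\infty.
\end{equation*}

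\textbf{Part (b).} Take $D\Subset\Sigma$ open and bounded, with $\overline D$ containing no zeros of $A$ (this is the natural interpretation matching Theorem \ref{th:main}(b)). Apply the preceding observation with $K=\overline D$. For $n$ large enough the displayed supremum is less than $1$ on $\overline D$, so $A_n(z)=A(z)\bigl(I+A(z)^{-1}(A_n(z)-A(z))\bigr)$ is invertible for every $z\in\overline D$ via the Neumann series. Hence $A_n$ has no zeros in $D$ for all such $n$.

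\textbf{Part (a).} Let $w\in\Sigma$ be a zero of $A$ with null multiplicity $\nu$. By discreteness of the zero set of $A$, there exists $\delta_0>0$ such that $\overline{B_{\delta_0}(w)}\subset\Sigma$ and $A$ has no zeros in $\overline{B_{\delta_0}(w)}\setminus\{w\}$. For each $\delta\in(0,\delta_0]$, apply the preliminary observation with $K=\partial B_\delta(w)$ (a $C^1$ contour on which $A$ is nonsingular): for $n$ sufficiently large the Rouch\'e hypothesis in Theorem \ref{th:gen:Rouche} is satisfied on $\partial B_\delta(w)$, which implies that $A_n$ has exactly $\nu$ zeros (counted with null multiplicity) inside $B_\delta(w)$. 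To extract the $\nu$ sequences, pick $\delta_k=1/k$ and a strictly increasing $n_k\in\N$ such that for all $n\ge n_k$ the function $A_n$ has exactly $\nu$ zeros with multiplicity in $B_{1/k}(w)$; list them as $w_n^{(1)},\dots,w_n^{(\nu)}$ (with repetitions reflecting multiplicities). Setting $n_0:=n_1$, for each $j\in\{1,\dots,\nu\}$ and each $k\ge 1$ one has $w_n^{(j)}\in B_{1/k}(w)$ whenever $n\ge n_k$, so $w_n^{(j)}\to w$ as $n\to\infty$.

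\textbf{Main obstacle.} The only delicate point is establishing a uniform bound on $\|A(z)^{-1}\|$ along contours arbitrarily close to the zero $w$ in part (a); this is exactly what the Gohberg--Sigal machinery (Theorem \ref{th:factorisation}) buys us, through the discreteness of the zero set and the meromorphic nature of $A^{-1}$. Once that is in place, the rest is a direct application of Rouch\'e together with a standard diagonal argument to assemble the sequences.
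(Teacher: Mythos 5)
Your proof is correct, and part (a) is essentially the paper's argument: bound $\|A(z)^{-1}\|$ on a small circle $\partial B_\delta(w)$ avoiding other zeros, feed the locally uniform convergence \eqref{eq:hur-loc-uni} into the generalised Rouch\'e theorem to get exactly $\nu$ zeros of $A_n$ in $B_\delta(w)$ for large $n$; your explicit diagonal extraction with $\delta_k=1/k$ and increasing $n_k$ just spells out what the paper leaves implicit when it fixes a single $\epsilon$. Where you genuinely diverge is part (b): the paper covers $D$ by finitely many simply connected open sets $D_1,\dots,D_N$ and reruns the Rouch\'e argument on each, concluding $A_n$ has as many zeros there as $A$ (namely none), whereas you avoid Rouch\'e altogether and argue directly that $\sup_{z\in\overline D}\|A(z)^{-1}(A(z)-A_n(z))\|<1$ for large $n$, so $A_n(z)=A(z)\bigl(I+A(z)^{-1}(A_n(z)-A(z))\bigr)$ is invertible on all of $\overline D$ by a Neumann series. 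Your route is more elementary, needs no decomposition of $D$ or contour regularity, and makes transparent the point you rightly flag: one must read the hypothesis as $\overline D$ being free of zeros of $A$ (as in Theorem \ref{th:main}(b)), since a zero of $A$ sitting on $\partial D$ could spawn zeros of $A_n$ inside $D$ (already in the scalar example $A(z)=z$, $A_n(z)=z-1/n$); the paper's own proof of (b) implicitly relies on the same reading when it applies Rouch\'e on the boundaries of the $D_j$.
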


\begin{proof}
First, focus on (a). Let $w$ be a zero of $A$. Since $A$ is analytic, there exists $\epsilon_0 > 0$ such that $\overline{B_{\epsilon_0}(w)}$ intersects neither $\partial \Sigma $ nor any other zero of $A$. Let $\epsilon \in (0, \epsilon_0)$. There exists $C > 0$ such that for all $n \in \N$,
\begin{equation*}
 \sup_{|z-w|=\epsilon} \norm{A(z)^{-1}\br*{A(z) - A_n(z)}} \leq C \sup_{|z-w|=\epsilon} \norm{A(z) - A_n(z)}
\end{equation*}
hence, by the hypothesis \eqref{eq:hur-loc-uni} and the generalised Rouch\'e's theorem, there exists $n_1(\epsilon) \in \N$ such that $A_n$ has $\nu$ zeroes in $B_{\epsilon}(w)$ for all $n \geq n_1(\epsilon)$. The proof of (a) is completed by setting $n_0 = n_1(\epsilon)$.

Next, we prove (b). Let $D_1,...,D_N \subseteq D$ be a collection of simply connected, bounded open sets such that $\cup_{j=1}^N D_j = D$. By the same argument used to prove (a), for each $j \in \set{1,...,N}$, there exists $n_j \in \N$ such that $A_n$ does not have any zeros in $D_j$ for all $n \geq n_j$. The proof of (b) is completed by setting $n_0 = \max \set{n_1,...,n_N}$.
\end{proof}

In addition, we shall need the following lemma, which will be used to verify the uniform convergence hypothesis of the above generalised Hurwitz's theorem.

\begin{lemma}
\label{lem:point-to-uniform}
	Let $D\subset\C$ be a convex, bounded Cauchy domain (cf. \cite[p. 268]{taylor-lay}) and let $A,A_n:D\to \cB(\cX)$, $n\in\N$, be analytic operator-valued functions. Assume that
	\begin{enumerate}
		\item[(i)] $\|A_n(z)-A(z)\| \to 0$ as $n\to\infty$ for all $z\in D$,
		\item[(ii)] $\sup_{n\in\N}\sup_{z\in D} \|A_n(z)\| <+\infty$.
	\end{enumerate}
	then
	\begin{align*}
		\sup_{z\in Q} \|A_n(z)-A(z)\|  \to 0 \quad \text{ as }n\to\infty
	\end{align*}
	for any compact $Q\subset D$.
\end{lemma}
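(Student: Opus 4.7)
The plan is to run an Arzel\`a--Ascoli style argument for operator-valued analytic functions. First I would extract a uniform bound on the derivatives $A_n'$ via Cauchy's integral formula, then combine the resulting equicontinuity with pointwise convergence on a finite $\delta$-net to upgrade pointwise to uniform convergence on $Q$. Concretely, since $Q \subset D$ is compact and $D$ is open, one finds a radius $r > 0$ such that $\overline{B_r(z)} \subset D$ for every $z \in Q$. Writing $M := \sup_{n \in \N} \sup_{z \in D} \|A_n(z)\|$, which is finite by hypothesis~(ii), the operator-valued Cauchy integral formula on the circle $\set{|\zeta - z| = r}$ gives the uniform bound $\|A_n'(z)\| \leq M/r$ for every $z \in Q$ and $n \in \N$. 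Integrating $A_n'$ along straight segments---which stay inside $D$ by convexity of $D$---then shows that each $A_n$ is Lipschitz on $Q$ with constant $M/r$, and the same bound transfers to $A$ by passing to pointwise limits in the Lipschitz inequality. Hence the family $\set{A_n - A}_{n \in \N}$ is uniformly Lipschitz on $Q$ with constant $L := 2M/r$.

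Next, fix $\epsilon > 0$ and, using compactness of $Q$, cover it by finitely many open balls of radius $\delta := \epsilon/(3L)$ centered at points $z_1, \dots, z_N \in Q$. Applying hypothesis~(i) at each of the finitely many $z_j$ yields $n_0 \in \N$ such that $\|A_n(z_j) - A(z_j)\| < \epsilon/3$ for all $n \geq n_0$ and all $j \in \set{1, \dots, N}$. For an arbitrary $z \in Q$, picking $z_j$ with $|z - z_j| < \delta$ and combining the triangle inequality with the uniform Lipschitz estimate would then give $\|A_n(z) - A(z)\| < \epsilon$ uniformly over $z \in Q$ for every $n \geq n_0$, as required.

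I do not anticipate any serious obstacles; the argument is the standard equicontinuity-plus-finite-net strategy transcribed to the operator-valued setting, and the Cauchy domain/convexity assumption on $D$ is used only very mildly, to guarantee that straight segments between nearby points of $Q$ remain inside $D$. The one minor subtlety is verifying that the limit $A$ itself inherits the Lipschitz constant $M/r$ on $Q$; this can be obtained either by direct passage to the limit in the Lipschitz inequality for $A_n$, or can be side-stepped by first proving uniform convergence on the net $\set{z_1, \dots, z_N}$ and then invoking only the uniform Lipschitz bound on the $A_n$'s together with the continuity of $A$ that emerges a posteriori.
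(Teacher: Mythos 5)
Your proof is correct and follows the same core strategy as the paper's: use the operator-valued Cauchy integral formula to obtain a uniform Lipschitz (equicontinuity) bound for $A_n-A$, and then upgrade pointwise to uniform convergence on compacts. The differences are minor but worth recording. The paper gets the derivative bound by integrating over the full boundary $\partial D$ (this is where the Cauchy-domain hypothesis, i.e.\ finite arc length of $\partial D$, enters) and then concludes via Arzel\`a--Ascoli combined with a subsequence-of-subsequences argument; you instead use small circles strictly inside $D$ and a finite $\delta$-net, which is a bit more elementary, avoids the compactness machinery, and in fact never uses the rectifiability of $\partial D$ --- only that $D$ is open, bounded and convex. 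One small imprecision to repair: your Cauchy estimate $\|A_n'(z)\|\leq M/r$ is stated only for $z\in Q$, but when you integrate $A_n'$ along the segment $[z,w]$ with $z,w\in Q$, that segment generally leaves $Q$; it lies in $\operatorname{conv}(Q)$, which is compact and contained in $D$ by convexity of $D$, so the radius $r$ should be chosen so that $\overline{B_r(\zeta)}\subset D$ for all $\zeta\in\operatorname{conv}(Q)$ (the paper handles precisely this point via $[z,w]\subset\operatorname{conv}(Q)\Subset D$). With that adjustment, the transfer of the Lipschitz constant to $A$ by pointwise limits and the finite-net conclusion both go through as you describe.
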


\begin{proof}
For $z,w\in\C$ the notation $[z,w]$ denoted the line segment connecting $z$ to $w$.
	The strategy of the proof is to prove equicontinuity of the sequence $(\|A_n\|)_{n\in\N}$  and then use the Arzel\'a-Ascoli theorem (note that boundedness already follows from the assumptions).

	For convenience we denote $C_n := A_n - A$ and
	\begin{align*}
		f_n(z) := \|C_n(z)\| .
	\end{align*}
	in the rest of the proof. For $z,w\in D$ one has
	\begin{align}\label{eq:lbound_C_2}
		|f_n(z)-f_n(w)| \leq \|C_n(w) - C_n(z)\|
		= \biggl\| \int_{[z,w]} \f{dC_n}{dz}(\tilde w)\,d \tilde w \biggr\|
		\leq |z-w| \biggl\| \f{dC_n}{dz}(\cdot) \biggr\|_{L^\infty([z,w];\cB(\cX))}
	\end{align}
	for all $n\in\N$. Note that convexity of $D$ implies $[z,w]\subset \operatorname{conv}(Q) \Subset D$. Since $w\mapsto \big\|\f{dC_n}{dz}(w)\big\|$ is continuous, it has a maximum on $[z,w]$, say
	\begin{align}\label{eq:lbound_C_3}
		\biggl\| \f{dC_n}{dz}(\cdot) \biggr\|_{L^\infty([z,w];\cB(\cX))} = \biggl\| \f{dC_n}{dz}(\zeta_n) \biggr\| .
	\end{align}
	The openness of $D$ and the fact that $\{\zeta_n\}_{n\in\N}\Subset D$ imply that $r := \inf_{n\in\N} \dist(\zeta_n,\del B)>0$.
	By Cauchy's integral formula for operator-valued functions (cf. \cite[Th. V.1.4]{taylor-lay}) one has
	\begin{align*}
		\f{dC_n}{dz}(\zeta_n) = \f{1}{2\pi i}\oint_{\del D} \f{C_n(w)}{(\zeta_n-w)^2}\,dw.
	\end{align*}
	Taking norms on both sides, we obtain
	\begin{equation}\label{eq:cauchy_bound}
		\biggl\| \f{dC_n}{dz}(\zeta_n) \biggr\|  \leq \f{1}{2\pi} \oint_{\del D} \f{\|C_n(w)\| }{|\zeta_n-w|^2}\,dw
		\leq \f{|\del D|}{r^2} \|C_n\|_{L^\infty(D;\cB(\cX))},
	\end{equation}
	where $|\del D|$ denotes the arc length of $\del D$. By boundedness of $\|A_n\| $ (assumption (ii)) and \eqref{eq:cauchy_bound} we have
	\begin{equation}\label{eq:dCdz<infty}
		\sup_{n\in\N} \biggl\| \f{dC_n}{dz}(\zeta_n) \biggr\|  \leq \f{|\del D|}{r^2} \sup_{n\in\N} \|C_n\|_{L^\infty(\gamma_n;\cB(\cX))}
		< +\infty.
	\end{equation}
	Combining eqs. \eqref{eq:lbound_C_2}-\eqref{eq:dCdz<infty} we conclude that
	\begin{align*}
		|f_n(z)-f_n(w)| \leq C|z-w|
	\end{align*}
	for all $z,w\in Q$ and all $n\in\N$, hence the set $\{f_n|_Q\}_{n\in\N}$ is equicontinuous. Applying the Arzel\'a-Ascoli theorem we conclude that there exists a subsequence $(f_{n_k})_{k\in\N}$, which converges uniformly on $Q$. By pointwise convergence of the $A_n$ (assumption (i)) the limit is identified as 0, i.e. $f_{n_k}\to 0$ uniformly on $Q$. Finally, applying the same reasoning to every subsequence of $(f_{n})_{n\in\N}$ we conclude that $f_{n}\to 0$ uniformly on $Q$.
\end{proof}

Finally, in order to apply Gohberg-Sigal theory to our setting, we require that the operator-valued functions $T$ and $T_n$ admit decompositions as the sum of the identity and a compact operator on $L^2 (\Gamma)$.
In fact, this does not hold for $T_n$ since it is finite-rank, however, we may easily overcome this by introducing an auxiliary operator-valued function
\begin{equation}
\label{eq:tT-n-defn}
 \tilde T_n(k) := (I - P_n) + T_n(k), \qquad n \in \N, \quad k \in \C_-
\end{equation}
whose zeros coincide with $T_n$.
On the other hand, the desired decomposition for $T$ is proven in Appendix \ref{app:decomp}.

\begin{prop}
\label{prop:T-decomp}
Consider the operator-valued function $T: \C_- \to \cB(L^2(\Gamma))$ defined by \eqref{eq:T-defn}.  There exists an analytic compact-operator-valued function $K:\C_- \to \cK(L^2(\Gamma))$ such that
\begin{equation*}
T(k) = I  + K(k), \qquad k \in \C_-.
\end{equation*}
\end{prop}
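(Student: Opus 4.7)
The plan is to decompose $T(k)$ into a diagonal ``reference'' part, coming from the same computation performed on $B_X(0)$ with no obstacle, plus a correction that encodes the presence of the obstacle. Introduce the diagonal operator
\[
\Min^{\mathrm{b}}(k) e_\alpha := \frac{J_{|\alpha|}(kX)}{kJ'_{|\alpha|}(kX)} e_\alpha, \qquad \alpha \in \Z,
\]
the inner NtD operator for $B_X(0)$ with no obstacle; since the zeros of $J'_\nu$ are real, $\Min^{\mathrm{b}}(k)$ is analytic in $k \in \C_-$. Write $T(k) = T_0(k) + T_1(k)$ with
\[
T_0(k) := \tfrac{1}{2}\cN^{1/2}(N_1(k)+\Min^{\mathrm{b}}(k)N_2(k))\cN^{-1/2},\quad T_1(k) := \tfrac{1}{2}\cN^{1/2}(\Min(k)-\Min^{\mathrm{b}}(k))N_2(k)\cN^{-1/2}.
\]

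For the ball part, since $N_1$, $N_2$, $\Min^{\mathrm{b}}$ and $\cN$ are all diagonal in the basis $\set{e_\alpha}$, $T_0(k)$ is a diagonal operator on $L^2(\Gamma)$. The $\alpha$-th diagonal entry is explicit in terms of $J_{|\alpha|}(kX)$, $J'_{|\alpha|}(kX)$, $H^{(1)}_{|\alpha|}(kX)$ and $(H^{(1)}_{|\alpha|})'(kX)$; simplifying using the Wronskian identity $J_\nu(z)(H^{(1)}_\nu)'(z) - J'_\nu(z)H^{(1)}_\nu(z) = 2i/(\pi z)$ together with the large-order asymptotics $H^{(1)}_\nu(z)\sim A_\nu$, $(H^{(1)}_\nu)'(z) \sim -(\nu/z) A_\nu$, $J_\nu(z) \sim (z/2)^\nu/\Gamma(\nu+1)$ and Stirling's formula, a direct computation shows the diagonal entry tends to $1$ as $|\alpha|\to\infty$. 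Consequently $T_0(k) - I$ is a diagonal operator whose entries tend to zero, hence compact on $L^2(\Gamma)$.

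For the obstacle correction $T_1(k)$, the geometric fact $U \subset B_{X-1}(0)$ is crucial: the annulus $A := \set{x : X - 1 < |x| < X}$ lies in both $\Om$ and $B_X(0)$. For $g \in H^{-1/2}(\Gamma)$, set $v := u_{\mathrm{in}}(k,g) - u^{\mathrm{b}}_{\mathrm{in}}(k,g)$, the difference of the weak solutions of \eqref{eq:BVP-in} on $\Om$ and on $B_X(0)$. Both solutions satisfy $(-\Delta - k^2)w = 0$ on $A$ with $\partial_\nu w|_\Gamma = g$, so $v$ satisfies $(-\Delta - k^2)v = 0$ on $A$ together with the \emph{homogeneous} Neumann condition $\partial_\nu v|_\Gamma = 0$. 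By interior elliptic regularity up to the smooth Neumann portion of the boundary, $v$ is smooth in any $A'\Subset A\cup\Gamma$, with $\norm{v}_{H^s(A')} \leq C_{s,k}\norm{g}_{H^{-1/2}(\Gamma)}$ for every $s > 0$. Taking the trace on $\Gamma$ shows $\Min(k) - \Min^{\mathrm{b}}(k) : H^{-1/2}(\Gamma) \to H^s(\Gamma)$ is bounded for all $s > 0$; choosing $s > 1/2$ and composing with the compact embedding $H^s(\Gamma) \hookrightarrow H^{1/2}(\Gamma)$ on the smooth compact curve $\Gamma$ yields compactness of $\Min(k) - \Min^{\mathrm{b}}(k) : H^{-1/2}(\Gamma) \to H^{1/2}(\Gamma)$. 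Combined with the isomorphisms $\cN^{-1/2} : L^2(\Gamma) \to H^{1/2}(\Gamma)$ and $\cN^{1/2} : H^{1/2}(\Gamma) \to L^2(\Gamma)$, and the boundedness of $N_2(k) : H^{1/2}(\Gamma) \to H^{-1/2}(\Gamma)$ noted just above \eqref{eq:N1-N2-prop}, $T_1(k)$ is compact on $L^2(\Gamma)$.

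Analyticity in $k \in \C_-$ of $K(k) := T(k) - I = (T_0(k) - I) + T_1(k)$ is inherited from the factors: $N_1$, $N_2$ are analytic on $\C \setminus (-\infty, 0]$ by inspection, $\Min$ is analytic on $\C_-$ by the cited meromorphic-continuation result, and $\Min^{\mathrm{b}}$ is analytic on $\C \setminus \R$. The main technical difficulty is the ball-part asymptotic computation: one must carefully combine the Bessel/Hankel asymptotics with the Wronskian identity to verify that the diagonal of $T_0(k)$ tends to precisely $1$, since this pins down the identity operator as the ``leading symbol'' of $T(k)$. The obstacle-correction step is comparatively soft, thanks to the insulating annulus that separates the rough boundary $\partial U$ from the smooth interface $\Gamma$.
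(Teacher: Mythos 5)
Your proof is correct, but it takes a genuinely different route from the paper's. The paper (Appendix \ref{app:decomp}) never introduces the Helmholtz ball NtD map: it instead decomposes the inner solution operator as $S(k)=\chi S^A+R(k)D_\chi(k)S^A$, where $S^A$ solves an explicit \emph{harmonic} annulus problem with artificial Neumann data on $\partial B_{X-1}(0)$ chosen to satisfy the compatibility condition; $\cN^{\f12}\gamma_\Gamma S^A\cN^{\f12}=X+K^A$ is computed by separation of variables (Lemma \ref{lem:SA-decomp}), the remainder is smoothing because $\chi R(k)$ maps $L^2(\Om)$ into $H^2$ near $\Gamma$, and the identity in $T(k)=I+K(k)$ then emerges from combining $\cN^{\f12}\Min(k)\cN^{\f12}=X+K_3(k)$ with the large-order Hankel asymptotics for $N_1,N_2$ (Lemma \ref{lem:K-prop}). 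You instead compare $\Min(k)$ with the Helmholtz NtD map of the unobstructed ball at the \emph{same} $k$, which makes the principal part $T_0(k)$ exactly diagonal and lets the Bessel--Hankel Wronskian produce the limit $1$ of the diagonal entries in closed form (no cutoff, no commutator $D_\chi$), while compactness of the obstacle correction comes from homogeneous-Neumann elliptic regularity for the difference of the two solutions in the buffer annulus $X-1<|x|<X$ --- the same geometric mechanism ($U\subset B_{X-1}(0)$) that the paper exploits through $R(k)D_\chi(k)S^A$. Your version buys an exact formula for the leading diagonal symbol and a softer compactness argument; the paper's buys complete explicitness (separation of variables, explicit mapping properties of $R(k)$) at the cost of the cutoff bookkeeping. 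One caution: since the limit of the diagonal of $T_0(k)$ hinges on an exact cancellation, you must use the normalisation of $N_2(k)$ (orientation of the normal and the chain-rule factor $k$ in $\partial_\nu H^{(1)}_{|\alpha|}(k|x|)$) that actually enters the definition \eqref{eq:T-defn}; with the consistent convention under which resonances are zeros of $T$, your computation does give diagonal entries $\tfrac12\cdot\bigl(-\tfrac{2i}{\pi kX}\bigr)\bigl(A_{|\alpha|}J'_{|\alpha|}(kX)\bigr)^{-1}\to 1$, but a mismatched sign or missing factor of $k$ would destroy the limit, so this step deserves the careful verification you flag.
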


\subsection{Reformulation of Theorem \ref{th:main}}

We are now in a position to state a slight generalisation of Theorem \ref{th:main}.
This generalisation takes into account the algebraic multiplicities of the resonances,
which are defined as follows.
\begin{de}
The \emph{algebraic multiplicity} of a scattering resonance $k \in \C_-$ of $H$ is defined as the null multiplicity of the corresponding zero of the operator-valued function $T: \C_- \to \cB(L^2(\Gamma))$ defined by \eqref{eq:T-defn}.
\end{de}
 Theorem \ref{th:main} follows immediately from next theorem by Lemma \ref{lem:geom-to-mosco}.

\begin{customthm}{\ref{th:main}'}
    Suppose that Assumptions \ref{ass:Un}-\ref{ass:param} hold and
      \begin{equation*}
    H^1_0(U_n^c) \Mto H^1_0(U^c) \quad \text{as} \quad n \to \infty.
  \end{equation*}
    \begin{enumerate}[label=(\alph*)]
    \item
     For any resonance $k \in \C_-$ of $H$ with algebraic multiplicity $\nu$, there exists $n_0 \in \N$ and $\nu$ sequences of zeros $k^{(1)}_n,...,k^{(\nu)}_n\in \C_-$ of $g_n$, $n \geq n_0$, such that $k^{(j)}_n \to k$ as $n \to \infty$
     for all $j \in \C_-$.
    \item
    For any bounded, open subset $D \subset \C_-$ such that $\overline{D}$ does not contain any resonances of $H$, there exists $n_0 \in \N$ such that
    $D$ does not contain any zeros of $T_n$ for any $n \geq n_0$.
\end{enumerate}
\end{customthm}

The purpose of Section \ref{sec:main-conv} shall be to prove the following result regarding the convergence and uniform boundedness of the sequence of numerical approximations $T_n(k)$, $n \in \N$.
\begin{prop}
\label{prop:Tn}
 Let $T: \C_- \to \cB(L^2(\Gamma))$ and $\tilde T_n: \C_- \to \cB(L^2(\Gamma))$, $n \in \N$, be the operator-valued functions defined by \eqref{eq:T-defn} and \eqref{eq:tT-n-defn} respectively.  Suppose that Assumptions \ref{ass:Un}-\ref{ass:param} hold and
      \begin{equation*}
    H^1_0(U_n^c) \Mto H^1_0(U^c) \quad \text{as} \quad n \to \infty.
    \end{equation*}
    Then the following holds.
 	\begin{enumerate}[label=(\alph*)]
		\item $\|\tilde T_n(k)-T(k)\|_{L^2 \to L^2} \to 0$ as $n\to\infty$ for all $k\in \C_-$,
		\item $\sup_{n\in\N}\sup_{k\in D} \|\tilde T_n(k)\|_{L^2 \to L^2} <+\infty$ for every bounded set $D \subset \C_-$ with $\overline{D} \subset \C_-$.
	\end{enumerate}
\end{prop}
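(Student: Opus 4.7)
My plan is to reduce both (a) and (b) to operator-norm control of $\Min^n(k) - \Min(k)$, and then to decompose that error into its three natural components (truncation, FEM, geometric). First I would exploit Proposition \ref{prop:T-decomp} to write $T(k) = I + K(k)$ with $K(k)$ compact. Since $\cN$ commutes with $P_n$, a direct algebraic manipulation gives the clean identity
\begin{equation*}
\tilde T_n(k) - T(k) \;=\; \tfrac{1}{2}\, P_n\, \cN^{1/2}\bigl(\Min^n(k) - \Min(k)\bigr) N_2(k) \cN^{-1/2} P_n \;-\; \bigl(K(k) - P_n K(k) P_n\bigr).
\end{equation*}
Because $P_n \to I$ strongly on $L^2(\Gamma)$ and $K(k)$ is compact (and analytic in $k$), the second bracket tends to zero in operator norm for each $k$ and remains uniformly bounded in $k$ over compact subsets of $\C_-$. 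The whole problem therefore collapses to analysing the first bracket.

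For part (a), I would introduce an auxiliary operator $\hat M^n_{\mathrm{in}}(k)$ defined via the \emph{full} FEM eigenfunction expansion (i.e. $J_n = d_n$), and split
$\Min^n(k) - \Min(k) = \bigl(\Min^n(k) - \hat M^n_{\mathrm{in}}(k)\bigr) + \bigl(\hat M^n_{\mathrm{in}}(k) - \Min(k)\bigr)$.
The first piece is the eigenfunction-expansion tail; using Weyl asymptotics $\mu^n_m \asymp m$ for the FEM eigenvalues and the bound $J_n N_n^{-2} \to \infty$, one shows this tail vanishes faster than the polynomial growth in $N_n$ produced by the bordering. For the second, FEM-plus-geometric piece, the assumption $H^1_0(U_n^c)\Mto H^1_0(U^c)$ together with a partition-of-unity argument analogous to that in the proof of Lemma \ref{lem:geom-to-mosco}, followed by Lemma \ref{lem:W_n+}, delivers $V^n_{\mathrm{ND}} \Mto H^1_{\mathrm{ND}}(\Omega)$; the standard theory of Mosco convergence of quadratic forms then gives strong resolvent convergence of the FEM Neumann-Dirichlet Laplacians on $\Omega_n$ to $-\DND$ on $\Omega$, which one must promote to operator-norm convergence of $\hat M^n_{\mathrm{in}}(k) \to \Min(k)$ on the finite-dimensional subspaces cut out by $P_n$. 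This promotion uses shape-regularity (Assumption \ref{ass:shape-reg}) together with the balance $h_n N_n^{5/4} \to 0$, which controls the FEM approximation rate against the $N_n$-dependent amplification from the $\cN^{1/2}$-borderings and the trace map between $\Gamma$ and $\Gamma_n$.

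For part (b), I would fix a compact $D$ with $\overline{D}\subset \C_-$, so $\{k^2 : k \in D\}$ is uniformly separated from the positive real axis and hence from every FEM Neumann-Dirichlet spectrum $\{\mu^n_m\}\subset(0,\infty)$. The eigenfunction-expansion representation \eqref{eq:a-n-matrix-elements} then yields bounds on $\|\Min^n(k)\|_{H^{-1/2}\to H^{1/2}}$ that are uniform in $(n,k)\in \N\times D$, once the traces $\gamma_{\Gamma_n} w^n_m$ are controlled uniformly via shape-regularity and the standard trace inequality. Combined with the uniform bounds on $N_1(k)$ and $N_2(k)$ coming from the Hankel asymptotics \eqref{eq:Hankel-asym} and with $\|P_n\cN^{\pm 1/2}\|$ estimates, this yields $\sup_n\sup_{k\in D}\|\tilde T_n(k)\|<\infty$. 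The main obstacle I foresee is the bordered-norm convergence $\hat M^n_{\mathrm{in}}(k) - \Min(k) \to 0$: strong resolvent convergence from Mosco theory is qualitative, whereas the $\cN^{1/2}$-borderings amplify high-frequency Fourier modes on $\Gamma$, so this step requires a quantitative FEM error estimate on a sequence of moving domains and is precisely where the numerology $h_n N_n^{5/4}\to 0$, $J_n N_n^{-2}\to\infty$ of Assumption \ref{ass:param} must be reconciled.
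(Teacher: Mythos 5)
Your overall architecture matches the paper's: the identity
\begin{equation*}
\tilde T_n(k) - T(k) \;=\; \tfrac12 P_n\cN^{1/2}\bigl(\Min^n(k)-\Min(k)\bigr)N_2(k)\cN^{-1/2}P_n \;-\; \bigl(K(k)-P_nK(k)P_n\bigr)
\end{equation*}
is exactly the content of Lemma \ref{lem:initial}, the compactness-plus-strong-convergence handling of the $K$ term is Lemma \ref{lem:E-mat-conv}, and your three-way split into truncation, FEM and geometric errors corresponds to $\Emat$, $\Esum$, $\Esol$. The Weyl/min-max lower bound for the FEM eigenvalues and the role of $J_nN_n^{-2}\to\infty$ in Proposition \ref{prop:E-sum} are as you envision, and your sketch of (b) captures the main ingredients (uniform separation $|\mu^n_m-k^2|\gtrsim 1+\mu^n_m$ for $k\in D$, uniform trace bounds, Hankel asymptotics).

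Where the proposal has a genuine gap is the FEM/geometric step, and the gap is conceptual rather than merely technical. You write that the $\cN^{1/2}$-borderings ``amplify high-frequency Fourier modes'' so that ``this step requires a quantitative FEM error estimate on a sequence of moving domains.'' That is not how the paper closes the argument, and I don't see how to make your version work: strong resolvent convergence from Mosco theory is not promoted to operator-norm convergence by any rate estimate, because the promotion step itself is a compactness argument, not a balancing-of-rates argument. The paper's key move, already built into Lemma \ref{lem:initial}, is to peel off the $\cN^{1/2}$-borderings \emph{before} touching the FEM error: the $L^2\to L^2$ norm of $\cN^{1/2}(\Min-\Min^n)N_2\cN^{-1/2}$ is converted into the $H^{-1/2}(\Gamma)\to H^{1/2}(\Gamma)$ norm of $\Min(k)-\Min^n(k)$, using only the $k$-locally uniform boundedness of $N_2(k):H^{1/2}\to H^{-1/2}$. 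After that conversion, \emph{no} polynomial growth in $N_n$ enters the FEM piece $\Esol(k)=\|P_n(\Min(k)-\hat\Pi_n^*\hMinn(k))P_n\|_{H^{-1/2}\to H^{1/2}}$, so a purely qualitative Mosco argument suffices for it. A second omission is the entire machinery of Section \ref{subsec:tools}: to compare the FEM solution on $\Om_n$ (with polygonal boundary $\Gamma_n$) with the continuous solution on $\Om$ (with boundary $\Gamma$), the paper needs the bijections $\hat\iota_n:C(\Gamma_n)\to C(\Gamma)$ with uniform $H^s$-bounds (Lemma \ref{lem:iota-n}), the extension operators $E_n:\VhND\to H^1(B_X)$ (Lemma \ref{lem:E-n}), the interpolation bound $\|\hat\iota_n\hat\Pi_n P_n g - g\|_{H^{-s}}\lesssim h_n^2N_n^{2+s}\|g\|_{H^{-s}}$ (Lemma \ref{lem:interpolation}) and the uniform trace bound on $\Gamma_n$ (Lemma \ref{lem:trace-uniform}). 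The numerology $h_nN_n^{5/4}\to 0$ is consumed entirely by these interpolation constants and by Proposition \ref{prop:E-sum}; it plays no role in the convergence rate of the FEM itself. The operator-norm convergence $\|E_nS_n(k)P_n - S(k)\|_{H^{-1/2}\to H^1}\to 0$ is then established in Proposition \ref{prop:mosco-to-conv} by a weak-compactness/Galerkin-limit argument relying on the derived Mosco convergence $E_n\VhND\Mto\HND$ (Lemma \ref{lem:fem-mosco}) — not by an a-priori FEM error estimate. Without this decoupling and without the bijection/extension operators, your proposal as written cannot be completed.
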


As we shall now show, the main result follows from this proposition.

\begin{proof}[Proof of Theorem \ref{th:main}']
By Proposition \ref{prop:Tn} and Lemma \ref{lem:point-to-uniform}, we have $\norm{\tilde T_n(k)- T(k)}_{L^2 \to L^2} \to 0$ as $n \to \infty$ locally uniformly for $k \in \C_-$. The result then follows by an application of the generalised Hurwitz's result Lemma \ref{lem:hurwitz}.
\end{proof}

\section{Main convergence proof}\label{sec:main-conv}
In this section, we prove Theorem \ref{th:main} which, as shown above, amounts to proving Proposition \ref{prop:Tn}. In Subsection \ref{subsec:initial}, we begin by estimating the full error term $\norm{T(k) - \tilde T_n(k)}_{L^2 \to L^2}$
from Proposition \ref{prop:Tn} (a) in terms of three other terms which, respectively, isolate the error arising from matrix truncation,  sum  truncation in the eigenfunction expansion and the error from the finite element method (including the polygonal approximation of boundaries).
Subsection \ref{subsec:tools} then develops several necessary tools.
The matrix and sum truncation error are dealt with in in Subsection \ref{subsec:mat-sum} while the finite element error is dealt with in Subsection \ref{subsec:mosco-fem}. The proof is concluded in Subsection \ref{subsec:proof}.

\subsection{Initial estimates}
\label{subsec:initial}
Let us first introduce  three operators that will play a key role in our analysis.
\begin{itemize}
 \item
 Define an FEM space on the  boundary $\Gamma_n$ as follows,
\begin{equation*}
V^n(\Gamma_n) := \set*{ u|_{\Gamma_n} : u \in V^n(\Omega_n)}.
\end{equation*}
We let
\begin{equation}
\label{eq:hat-Pi}
\hat \Pi_n : C(\Gamma) \to V^n(\Gamma_n)
\end{equation}
denote a linear interpolation operator on $\Gamma$, which is defined as one would expect: $\hat{\Pi}_n g$ is the unique function in $V^n(\Gamma_n)$ such that $(\hat \Pi_n g)(p) = g(p)$ for any $p \in \Gamma \cap \Gamma_n$
\item
 For each $k \in \C_-$, the operator $$S(k) \in \cB(H^{-\f12}(\Gamma);\HND)$$ is defined as the weak solution operator for the boundary value problem \eqref{eq:BVP-in} associated to $\Min(k)$.
In other words, for any $g \in H^{-\f12}(\Gamma)$, $u = S(k)g$ solves
\begin{equation}\label{eq:weak-S}
 u \in \HND: \qquad \inner{\nabla u , \nabla \phi}_{L^2(\Om )} = k^2\inner{u, \phi}_{L^2(\Omega)} + (g, \phi|_\Gamma)_{H^{-\f12},H^{\f12}}, \qquad \forall \phi \in \HND.
\end{equation}
Note that we have
\begin{equation}
 \Min(k) = \gamma_\Gamma S(k), \qquad k \in \C_-.
\end{equation}
\item
Furthermore, we introduce an FEM discretisation
$$S_n(k) \in \cB(P_nH^{-\f12}(\Gamma);\VhND) $$
of $S(k)$ as in \eqref{eq:u-in-n-defn}, that is, given $g \in P_n H^{-\f12}(\Gamma)$, $u_n  = S_n(k) g$ solves
\begin{equation}
\label{eq:S-n-defn}
u_n \in \VhND:\qquad \inner{\nabla u_n, \nabla \phi}_{L^2(\Om_n)} = k_0^2 \inner{ u_n, \phi}_{L^2(\Om_n)} + \br{\hat \Pi_n g,\phi|_{\Gamma_n}}_{L^2(\Gamma_n)}, \qquad\forall \phi \in \VhND.
\end{equation}
Note that in terms of the approximation $\hMinn(k)$ for the inner NtD map introduced in Section \ref{sec:overview}, we have
\begin{equation*}
 \hMinn(k) = \gamma_{\Gamma_n} S_n(k), \qquad k \in \C_-, \quad n \in \N.
\end{equation*}
\end{itemize}

In what follows, it shall be helpful to consider intermediate sequence of operators
$\hat \Pi^*_n \gamma_{\Gamma_n} \hMinn(k)$, $n \in \N$, on $L^2(\Gamma)$.
Here, $\hat \Pi_n^*$ denotes the adjoint of the interpolation operator $\hat \Pi_n$ (i.e., so that the matrix elements of $\hat \Pi^*_n \hMinn(k)$ are given by $(\hMinn(k) e_\alpha, \hat \Pi_n e_\beta)_{L^2(\Gamma_n)}$).

The three sources of error are as follows.
\begin{itemize}
 \item
The \emph{matrix truncation error} is defined by
 \begin{equation}
  \Emat(k):= \norm{K(k) -  P_n K(k)P_n}_{L^2 \to L^2}
 \end{equation}
 where $K(k)$ is the family of compact operators from Proposition \ref{prop:T-decomp}.
 \item
 The \emph{sum truncation error} is defined by
 \begin{equation}
  \Esum(k) := \norm{P_n(\hat \Pi_n^* \hMinn(k) - \Min^n(k))P_n}_{H^{-\f12} \to H^{\f12}}.
 \end{equation}
 \item
 The \emph{finite element error} is defined by
 \begin{equation}
  \Esol(k) := \norm{P_n(\Min(k) - \hat \Pi_n^* \hMinn(k))P_n }_{H^{-\f12} \to H^{\f12}}.
 \end{equation}
 \end{itemize}
We have the following estimate.
\begin{lemma}
  \label{lem:initial}
  For all $k \in \C_-$ and $n \in \N$, it holds that
  \begin{equation*}
    \norm{T(k) - \tilde T_n(k)}_{L^2 \to L^2} \leq \Emat(k) + C_X(k) \br*{\Esum(k) + \Esol(k)},
  \end{equation*}
  for some locally uniform constant $C_X(k) > 0$.
\end{lemma}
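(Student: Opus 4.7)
The strategy is to decompose the difference $T(k) - \tilde T_n(k)$ into an outer piece, capturing the truncation error for the compact part of $T(k)$, and an inner piece, capturing the approximation error for $\Min(k)$. First, I would use the decomposition $T(k) = I + K(k)$ from Proposition \ref{prop:T-decomp} together with the definition $\tilde T_n(k) = (I - P_n) + T_n(k)$ to write
\begin{equation*}
T(k) - \tilde T_n(k) = K(k) + P_n - T_n(k).
\end{equation*}
Applying $P_n$ on both sides of $T(k) = I + K(k)$ gives $P_n = P_n T(k) P_n - P_n K(k) P_n$, which substituted into the middle term rearranges the above to
\begin{equation*}
T(k) - \tilde T_n(k) = \bigl[K(k) - P_n K(k) P_n\bigr] + \bigl[P_n T(k) P_n - T_n(k)\bigr].
\end{equation*}
By definition, the first bracket contributes exactly $\Emat(k)$ to the $L^2 \to L^2$ norm.

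For the second bracket, I would exploit the fact that $\cN^{\pm\f12}$, $N_1(k)$, $N_2(k)$, and $P_n$ are all diagonal in the basis $\{e_\alpha\}$ and therefore mutually commute. Consequently $\cN^{\f12}N_1(k)\cN^{-\f12} = N_1(k)$ is the same diagonal operator appearing (up to the $P_n$'s) in both $T(k)$ and $T_n(k)$, and it cancels in the difference, leaving
\begin{equation*}
P_n T(k) P_n - T_n(k) = \tfrac{1}{2}\cN^{\f12} P_n \bigl(\Min(k) - \Min^n(k)\bigr) P_n \, N_2(k) \, \cN^{-\f12},
\end{equation*}
where $\Min^n(k)$ is extended by zero off $P_n L^2(\Gamma)$. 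Using that $\cN^{\f12} : H^{\f12}(\Gamma) \to L^2(\Gamma)$ and $\cN^{-\f12} : L^2(\Gamma) \to H^{\f12}(\Gamma)$ are unit-norm isomorphisms by the very definition of the fractional Sobolev spaces, together with the bound $\|N_2(k)\|_{H^{\f12} \to H^{-\f12}} \leq C_X(k)$ which follows from the large-order asymptotic $(H^{(1)}_\nu)'(kX) \sim -\tfrac{\nu}{kX} A_\nu$ stated earlier, one concludes
\begin{equation*}
\|P_n T(k) P_n - T_n(k)\|_{L^2 \to L^2} \leq C_X(k) \, \|P_n(\Min(k) - \Min^n(k)) P_n\|_{H^{-\f12} \to H^{\f12}}.
\end{equation*}

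Finally, a triangle inequality applied to the splitting
\begin{equation*}
\Min(k) - \Min^n(k) = \bigl[\Min(k) - \hat\Pi_n^* \hMinn(k)\bigr] + \bigl[\hat\Pi_n^* \hMinn(k) - \Min^n(k)\bigr]
\end{equation*}
yields $\|P_n(\Min(k) - \Min^n(k)) P_n\|_{H^{-\f12} \to H^{\f12}} \leq \Esol(k) + \Esum(k)$, completing the estimate. There is no real obstacle here beyond the bookkeeping required to check that the diagonal operators $\cN^{\pm\f12}$ and $N_2(k)$ may be freely commuted with $P_n$ between the weighted Sobolev spaces, which is immediate since each of them preserves the eigenspaces of $\cN$.
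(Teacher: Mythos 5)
Your proof is correct and takes essentially the same route as the paper: split off $P_n T(k)P_n - T_n(k)$ from $K(k) - P_n K(k) P_n$, commute the diagonal factors $\cN^{\pm 1/2}$, $N_1(k)$, $N_2(k)$ past $P_n$ so that the $N_1$ parts cancel, bound the result by $\|P_n(\Min(k)-\Min^n(k))P_n\|_{H^{-1/2}\to H^{1/2}}$ using the locally uniform estimate on $N_2(k)$, and close with the $\hat\Pi_n^*\hMinn(k)$ triangle inequality. The paper phrases the first step as "add and subtract $I+P_nK(k)P_n$," which is algebraically identical to your substitution $P_n = P_nT(k)P_n - P_nK(k)P_n$; everything else coincides.
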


\begin{proof}

First, adding and subtracting $I + P_n K(k) P_n$,
\begin{equation*}
 \norm{T(k) - \tilde T_n(k)}_{L^2 \to L^2} \leq \underbrace{\norm{T(k) - (I + P_n K(k) P_n)}_{L^2 \to L^2 }}_{ = \Emat(k)} + \norm{(I + P_n K(k) P_n) - \tilde T_n(k)}_{L^2 \to L^2 }
\end{equation*}
where we have recognised the first term on the right hand side to be $\Emat(k)$ since
$T(k) = I + K(k)$. The second term may be simplified using the definitions of $\tilde T_n(k)$, $T_n(k)$ and $T(k)$, as well as the locally uniform boundedness of $N_2(k)$ between $H^{\f12}$ and $H^{-\f12}$,
\begin{align*}
 \norm{(I + P_n K(k) P_n) - \tilde T_n(k)}_{L^2 \to L^2 } & = \norm{ P_n T(k) P_n -  T_n(k)}_{L^2 \to L^2 } \\
 & = \tfrac{1}{2} \norm{ \cN^{\f12} P_n (\Min(k) - \Min^n(k))P_n N_2(k)\cN^{-\f12}}_{L^2 \to L^2} \\
 & = C_X(k) \norm{  P_n (\Min(k) - \Min^n(k))P_n}_{H^{-\f12} \to H^{\f12}}.
\end{align*}
The proof is complete by adding and subtracting  $\hat \Pi_n^* \hMinn(k)$ as follows,
\begin{multline*}
 \norm{  P_n (\Min(k) - \Min^n(k))P_n}_{H^{-\f12} \to H^{\f12}} \\\leq \underbrace{\norm{P_n(\hat \Pi_n^* \hMinn(k) - \Min^n(k))P_n}_{H^{-\f12} \to H^{\f12}}}_{= \Esum(k)}
 + \underbrace{\norm{P_n(\Min(k) - \hat \Pi_n^* \hMinn(k))P_n }_{H^{-\f12} \to H^{\f12}}}_{= \Esol(k)}.
\end{multline*}

\end{proof}

\subsection{Bijection, interpolation and extension operators}
\label{subsec:tools}
In this section, we collect some tools that we require, namely the following:
\begin{itemize}
 \item (Lemma \ref{lem:iota-n}) In order to compare functions on the interface $\Gamma$ with the polygonal approximation of the interface $\Gamma_n$, we shall use bijection operators
 $$ \hat{\iota}_n : C(\Gamma_n) \to C(\Gamma), \qquad n \in \N. $$
 \item (Lemma \ref{lem:E-n}) In order to compare piecewise affine functions on $\Om_n$ with functions on $\Om$ we shall use extension operators
 $$E_n: \VhND \to H^1(B_X), \qquad n \in \N. $$
 \item (Lemma \ref{lem:interpolation})  We shall need certain Sobolev norm estimates for the boundary linear interpolation operator $\hat{\Pi}_n$.
 \item (Lemma \ref{lem:trace-uniform}) Finally, we need uniform (in $n$) Sobolev norm estimates for the trace operator $\gamma_{\Gamma_n}$.
\end{itemize}

\red{}

First, we construct the bijection operators $\hat{\iota}_n$.
\begin{lemma}\label{lem:iota-n}
 There exists a sequence of invertible maps $\hat{\iota}_n : C(\Gamma_n) \to C(\Gamma)$, $n \in \N$, such that:
 \begin{enumerate}[label = (\alph*)]
  \item  $(\hat{\iota}_n g)(p) = g(p)$ for any point $p \in \Gamma \cap \Gamma_n$, $n \in \N$ and $g \in C(\Gamma_n)$,
  \item There exists a sequence $\delta_X(n) > 0$, $n \in \N$, with $\lim_{n \to \infty}\delta_X(n)=0$ such that for any $s \geq 0$,  $(g_n^{(1)})_{n \in \N} \subset C(\Gamma_n) \cap H^{-s}(\Gamma_n)$ and
  $(g_n^{(2)})_{n \in \N} \subset C(\Gamma_n) \cap H^{s}(\Gamma_n)$, we have
  \begin{equation*}
   \abs*{(\hat \iota_n g_n^{(1)},\hat \iota_n g_n^{(2)})_{L^2(\Gamma)} - (g_n^{(1)}, g_n^{(2)})_{L^2(\Gamma_n)} } \leq \delta_X(n) \norm{g_n^{(1)}}_{H^{-s}(\Gamma_n)}\norm{g_n^{(2)}}_{H^{s}(\Gamma_n)}, \qquad n \in \N .
  \end{equation*}
  \item For any $s \in [0,1]$, we have
  \begin{equation*}
   \sup_{n \in \N}\, \norm{\hat{\iota}_n}_{H^s \to H^s} < \infty \qquad \text{and} \qquad \sup_{n \in \N}\, \norm{\hat{\iota}^{-1}_n}_{H^s \to H^s} < \infty.
  \end{equation*}

 \end{enumerate}
\end{lemma}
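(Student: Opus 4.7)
The plan is to realise $\hat\iota_n$ as the pull-back by the radial projection $\phi_n : \Gamma \to \Gamma_n$. Since $\bigcup_n B_X^n = B_X(0)$ and each $B_X^n$ is convex with vertices on $\Gamma$, for all $n$ sufficiently large the origin lies in the interior of $B_X^n$, so every ray from the origin meets $\Gamma_n$ in a unique point and $\phi_n$ is a continuous bijection. Define $\hat\iota_n g := g\circ\phi_n$; for the finitely many smaller $n$ we set $\hat\iota_n$ arbitrarily since only asymptotic behaviour matters. Property (a) is then immediate, because the corners of $\Gamma_n$ lie on $\Gamma$ and are therefore fixed by $\phi_n$.

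For (b), parametrise both $\Gamma$ and $\Gamma_n$ by the angular coordinate $\theta\in[0,2\pi)$ via $\phi_n$, set $q_n(\theta) := \phi_n(X(\cos\theta,\sin\theta))$, and let $J_n(\theta) := |q_n'(\theta)|/X$ denote the Jacobian of $\phi_n^{-1}$. A direct change of variables yields
\begin{align*}
(\hat\iota_n g_n^{(1)}, \hat\iota_n g_n^{(2)})_{L^2(\Gamma)} - (g_n^{(1)}, g_n^{(2)})_{L^2(\Gamma_n)} = \int_{\Gamma_n} g_n^{(1)}\,\overline{g_n^{(2)}}\,(1-J_n)\,d\sigma_{\Gamma_n}.
\end{align*}
By convexity of $B_X^n$ and the fact that its vertices lie on $\Gamma$, an elementary computation gives $\|1-J_n\|_{L^\infty(\Gamma_n)} \leq C_X\,\dist_H(\Gamma,\Gamma_n)$, which vanishes as $n\to\infty$ by Assumption \ref{ass:Un}. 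The right-hand side above is then bounded, via the duality pairing between $H^{-s}(\Gamma_n)$ and $H^s(\Gamma_n)$, by $\|g_n^{(1)}\|_{H^{-s}(\Gamma_n)}\,\|(1-J_n)\,\overline{g_n^{(2)}}\|_{H^s(\Gamma_n)}$; a multiplier estimate for $(1-J_n)$ acting on $H^s(\Gamma_n)$, uniform in $s$, then produces the conclusion with a single sequence $\delta_X(n)\to 0$.

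For (c), the uniform $L^2 \to L^2$ bounds on $\hat\iota_n$ and $\hat\iota_n^{-1}$ reduce to uniform two-sided bounds on $J_n$: the lower bound follows because an inscribed chord of $\Gamma$ has length at least $(2/\pi)$ times its subtended arc, and the upper bound is trivial. Uniform $H^1 \to H^1$ bounds follow from the uniform bi-Lipschitz character of $\phi_n$ granted by the same geometric considerations. The intermediate range $s\in(0,1)$ is obtained by complex interpolation between $L^2$ and $H^1$.

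The main technical obstacle is the multiplier estimate in (b) being uniform in $s\geq 0$: although $1-J_n$ is uniformly small in $L^\infty$, it is only piecewise smooth on $\Gamma_n$, with jump discontinuities at the polygon corners, so its action on Sobolev spaces of high order is delicate. The anticipated strategy is to exploit that these jumps occur on an isolated finite set and are themselves of size $O(\dist_H(\Gamma,\Gamma_n))$: for $s<1/2$ the $L^\infty$ bound suffices directly via standard fractional multiplier estimates, while for $s>1/2$ one localises near the vertices with a partition of unity and exploits the $H^s$-algebra property to bound the multiplier norm on each smooth piece.
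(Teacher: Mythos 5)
Your construction is the same as the paper's: the angular (radial) projection $\iota_n:\Gamma\to\Gamma_n$, the pull-back $\hat\iota_n g=g\circ\iota_n$, the change of variables producing the Jacobian defect $1-J_n$ with $\norm{1-J_n}_{L^\infty(\Gamma_n)}\to0$, and bi-Lipschitz-type bounds for part (c) (the paper estimates the Gagliardo seminorm directly for $s\in(0,1)$, while your interpolation between $L^2$ and $H^1$ is an acceptable alternative). Part (a) is identical. A small side remark: the smallness of $\norm{1-J_n}_{L^\infty}$ should be deduced from the construction of $B_X^n$ (the maximal half-angle subtended by an edge of $\Gamma_n$ tends to zero), not from Assumption \ref{ass:Un}, which concerns $U_n$ rather than $B_X^n$.

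The genuine gap is in your treatment of (b). After the change of variables you pair $g_n^{(1)}$ in $H^{-s}(\Gamma_n)$ against $(1-J_n)\overline{g_n^{(2)}}$ in $H^{s}(\Gamma_n)$, so your argument requires the multiplier bound $\norm{(1-J_n)\overline{g_n^{(2)}}}_{H^{s}(\Gamma_n)}\le\delta_X(n)\norm{g_n^{(2)}}_{H^{s}(\Gamma_n)}$ for \emph{every} $s\ge0$. This step fails already at $s=\f12$, and cannot be repaired by the strategy you describe: $J_n$ is only piecewise smooth, and at a vertex where the two adjacent edges subtend different half-angles $\delta\neq\delta'$ its one-sided limits differ (by an amount comparable to $|\delta^2-\delta'^2|$), so $1-J_n$ has a jump there. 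Consequently, for $s\ge\f12$ the product $(1-J_n)\overline{g_n^{(2)}}$ in general does not belong to $H^{s}(\Gamma_n)$ at all (take $g_n^{(2)}\equiv1$), and the left-hand side of your multiplier estimate is $+\infty$; even in the equal-angle case $1-J_n$ is merely Lipschitz with kinks at the vertices, so the estimate still fails for $s>\f32$. The proposed fix (partition of unity near the vertices plus the $H^s$-algebra property ``on each smooth piece'') does not close this: localisation does not remove the jump or kink at the vertex, and the algebra property would require $1-J_n$ itself to lie in $H^{s}$ there, which it does not. This is not an irrelevant range, since the lemma is invoked with $s=\f12$ in the estimate of the term $(T2)$ in Section \ref{subsec:proof}. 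The paper's proof does not pass through a positive-order multiplier statement for the rough factor: it keeps $1-J_n$ with $g_n^{(1)}$ and bounds $\norm{(1-J_n)g_n^{(1)}}_{H^{-s}(\Gamma_n)}$ by $\norm{1-J_n}_{L^\infty(\Gamma_n)}\norm{g_n^{(1)}}_{H^{-s}(\Gamma_n)}$ directly, and then pairs with $\norm{g_n^{(2)}}_{H^{s}(\Gamma_n)}$. As written, your route delivers (b) at best for $0\le s<\f12$, which falls short of the statement and of its later use.
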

\begin{proof}
 Recall that $\Gamma = \partial B_X(0)$ and , for each $n \in \N$, $\Gamma_n = \partial B_X^n$, where $B_X^n$ is convex with $\set{0} \in B_X^n \subset B_X(0)$. Therefore, there exists a unique Lipschitz bijection $\iota_n: \Gamma \to \Gamma_n$ such that, for any $p \in \Gamma$, $\iota_n(p)$ is the unique point in $\Gamma_n $ with the same angular coordinate as $p$. Define
 \begin{equation}
  \hat \iota_n g = g \circ \iota_n, \qquad g \in C(\Gamma_n).
 \end{equation}
Then, (a) clearly holds.

Focus on (b).
Let $s \geq 0$. Let $(g_n^{(1)})_{n \in \N} \subset C(\Gamma_n)$ be bounded in $H^{-s}(\Gamma_n)$ and let $(g_n^{(2)})_{n \in \N} \subset C(\Gamma_n)$ be bounded in $H^{s}(\Gamma)$.
By a change of variables,
\begin{equation*}
 (\hat \iota_n g_n^{(1)},\hat \iota_n g_n^{(2)})_{L^2(\Gamma)} = \int_{\Gamma} g_n^{(1)}(\iota_n(\theta)) \overline{g_n^{(2)}}(\iota_n(\theta)) \,\d \theta = \int_{\Gamma_n } g_n^{(1)}(t) \overline{g_n^{(2)}}(t) \frac{\d \iota_n^{-1}}{\d t}(t)\, \d t.
\end{equation*}
Consequently, we have
\begin{align*}
 \abs*{(\hat \iota_n g_n^{(1)},\hat \iota_n g_n^{(2)})_{L^2(\Gamma)} - (g_n^{(1)}, g_n^{(2)})_{L^2(\Gamma_n)}}&  =  \abs*{\br*{(1 - \frac{\d \iota_n^{-1}}{\d t} )g_n^{(1)}, g_n^{(2)})}_{L^2(\Gamma_n )}}\\
 & \leq \norm{(1 - \frac{\d \iota_n^{-1}}{\d t} )g_n^{(1)}}_{H^{-s}(\Gamma_n)}
 \norm{g_n^{(2)}}_{H^{s}(\Gamma_n)} \\
 & \leq \norm{1 - \frac{\d \iota_n^{-1}}{\d t} }_{L^\infty(\Gamma_n)}
 \norm{g_n^{(1)}}_{H^{-s}(\Gamma_n)}
 \norm{g_n^{(2)}}_{H^{s}(\Gamma_n)}.
\end{align*}
It can be directly verified that $\tfrac{\d \iota_n^{-1}}{\d t} \to 1$ as $n \to \infty$ pointwise, completing the proof.

Finally, focus on (c). We shall only prove the statement for
$\hat{\iota}^{-1}_n$; the proof of the statement for $\hat{\iota}_n$ is very similar.
The case $s= 0$ follows directly from (b) so it suffices to show that
\begin{equation}
 [\hat{\iota}_n^{-1} u ]_{H^s(\Gamma_n)} \leq C_X [u ]_{H^s(\Gamma)}, \qquad n \in \N, \quad u \in H^s(\Gamma),
\end{equation}
for $s \in (0,1)$, where $[\cdot]_{H^s(\Gamma)}$ and $[\cdot]_{H^s(\Gamma_n)}$ denote the respective $H^s$ semi-norms.
By the Gagliardo representation for the semi-norms $[\cdot]_{H^s(\Gamma)}$, we have
\begin{equation}
 [\hat{\iota}_n^{-1} u]^2_{H^s(\Gamma_n)} \leq C_X \int_{\Gamma_n}\int_{\Gamma_n} \frac{|u(\iota_n^{-1}(x)) - u(\iota_n^{-1}(y))|^2}{|x - y|^{1 + 2s}} \, \d x \d y.
\end{equation}
Performing a change of variables, we have
\begin{align}\label{eq:gagliardo-est}
\int_{\Gamma_n}\int_{\Gamma_n} &\frac{|u(\iota_n^{-1}(x)) - u(\iota_n^{-1}(y))|^2}{|x - y|^{1 + 2s}} \, \d x \d y   = \int_{\Gamma}\int_{\Gamma} \frac{|u(\theta) - u(\phi)|^2}{|\iota_n(\theta) - \iota_n(\phi)|^{1 + 2s}} \frac{\d \iota_n}{\d \theta}(\theta) \frac{\d \iota_n}{\d \phi}(\phi) \, \d \theta \d \phi \nonumber \\
& \leq \norm{\iota_n}^2_{W^{1,\infty}(\Gamma)}\br*{\sup_{\theta,\phi \in \Gamma}\frac{|\theta - \phi|}{|\iota_n(\theta) - \iota_n(\phi)|}}^{1+2s}  \int_{\Gamma}\int_{\Gamma} \frac{|u(\theta) - u(\phi)|^2}{|\theta - \phi|^{1 + 2s}} \, \d \theta \d \phi \nonumber \\
& \leq C_X \br*{\sup_{\theta,\phi \in \Gamma}\frac{|\theta - \phi|}{|\iota_n(\theta) - \iota_n(\phi)|}}^{1+2s} [u ]^2_{H^s(\Gamma)}
\end{align}
where the last line follows by again using the Gagliardo representation for the $H^s$ semi-norm, as well as the fact that $\norm{\iota_n}_{W^{1,\infty}(\Gamma)} \leq C_X$.
Finally, the term in the brackets on the right hand side of \eqref{eq:gagliardo-est} is estimated as
\begin{align}\label{eq:sup-term-est}
 \sup_{\theta, \phi \in \Gamma}\frac{|\theta - \phi|}{|\iota_n(\theta) - \iota_n(\phi)|} = \sup_{x, y \in \Gamma_n}\frac{|\iota_n^{-1}(x) - \iota_n^{-1}(y)|}{|x - y|} \leq C_X \norm{\iota_n^{-1}}_{W^{1,\infty}(\Gamma_n)} \leq C_X,
\end{align}
 where the third line holds by Taylor's theorem. Substituting \eqref{eq:sup-term-est}  into the right hand side of \eqref{eq:gagliardo-est}, we obtain a constant independent of $s$ by take a supremum over $s \in (0,1)$, completing the proof.

\end{proof}

Next, we construct the aforementioned extension operators $E_n$.

\begin{lemma}\label{lem:E-n}
There exists a sequence of operators $E_n: \VhND \to H^1(B_X)$, $n \in \N$, such that:
\begin{enumerate}[label = (\alph*)]
 \item $E_n u = u$ on $\Om_n$ for any $u \in \VhND$ and $n \in \N$,
 \item $\sup_{n \in \N} \norm{E_n}_{H^1 \to H^1} < \infty$,
 \item $\hat{\iota}_n u|_{\Gamma_n} = (E_n u)|_{\Gamma}$ for every $n \in \N$ and $u \in \VhND$.
\end{enumerate}
\end{lemma}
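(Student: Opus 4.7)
The plan is to construct $E_n$ by extending $u$ naively to the two missing regions of $B_X\setminus \Om_n$, namely the obstacle $U_n$ and the thin curvilinear sliver $B_X\setminus B_X^n$, and then to exploit the piecewise-affine structure of $\VhND$ together with the smallness of the chord-to-arc distance to obtain the uniform $H^1$ bound in (b).

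For the construction, I would set $E_n u = u$ on $\Om_n$ and $E_n u = 0$ on $U_n$. On the sliver, writing $r_n(\theta)$ for the unique radius with $r_n(\theta)(\cos\theta,\sin\theta)\in\Gamma_n$, I would set
\begin{equation*}
E_n u(r\cos\theta, r\sin\theta) := u(r_n(\theta)\cos\theta, r_n(\theta)\sin\theta),\qquad r\in(r_n(\theta), X].
\end{equation*}
Property (a) is immediate, and (c) follows from the construction of $\iota_n$ in the proof of Lemma \ref{lem:iota-n}, since both sides at $(X\cos\theta, X\sin\theta)$ equal the trace of $u$ at $\iota_n(X\cos\theta, X\sin\theta)$. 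The boundary condition $u|_{\partial U_n}=0$ makes the zero extension across $\partial U_n$ admissible in $H^1$, and the radial extension is continuous across $\Gamma_n$ by construction, so $E_n u\in H^1(B_X)$.

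To prove (b), it suffices to bound $\|E_n u\|^2_{H^1(B_X\setminus B_X^n)}$ by $C\|u\|^2_{H^1(\Om_n)}$. Writing $\tau(\theta):=u(r_n(\theta)\cos\theta, r_n(\theta)\sin\theta)$ and passing to polar coordinates, the $L^2$ part is bounded by $C(X-\min_\theta r_n(\theta))\|\tau\|^2_{L^2([0,2\pi))}$, which Lemmas \ref{lem:iota-n}(c) and \ref{lem:trace-uniform} reduce to $C\|u\|^2_{H^1(\Om_n)}$. Since $\partial_r E_n u\equiv 0$ on the sliver, the semi-norm reduces to
\begin{equation*}
|E_n u|^2_{H^1(B_X\setminus B_X^n)} = \int_0^{2\pi}|\tau'(\theta)|^2\log(X/r_n(\theta))\,d\theta.
\end{equation*}

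The key step and the main obstacle is controlling this last integral: a naive trace inequality would only give $H^{1/2}$ control on $u|_{\Gamma_n}$, which is not enough for $\tau'$. I would proceed edge-by-edge. For an edge $e$ of $\Gamma_n$ of length $L_e$ subtending angle $\phi_e$ at the origin (so $L_e\approx X\phi_e$), with adjacent triangle $T_e\in\cT_n$, elementary trigonometry gives $\log(X/r_n(\theta))\leq C\phi_e^2$ on the corresponding angular range. Since $u$ is affine on $T_e$, the tangential derivative $\partial_s u|_e$ is a constant $c$, and a direct computation of the change of variables from arc length on $e$ to angle gives $|\tau'(\theta)|\leq CX|c|$. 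Combining with the shape-regularity bound $c^2\leq C L_e^{-2}\|\nabla u\|^2_{L^2(T_e)}$ (from Assumption \ref{ass:shape-reg}), the contribution of $e$ is at most $CX^2 c^2\phi_e^3 \leq C(L_e/X)\|\nabla u\|^2_{L^2(T_e)}$, and summing over boundary edges (each adjacent to a distinct triangle) yields $|E_n u|^2_{H^1(B_X\setminus B_X^n)}\leq Ch_n\|u\|^2_{H^1(\Om_n)}$, which is not merely uniform but vanishes as $n\to\infty$.
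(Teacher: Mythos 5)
The extension you build is the same as the paper's: set $E_n u = u$ on $\Om_n$, $E_n u = 0$ on $U_n$, and extend radially from $\Gamma_n$ to $\Gamma$ on the sliver; (a) and (c) then follow exactly as in the paper. For the $H^1$ bound, however, you take a genuinely different route. The paper argues at the level of a single triangle $T$ adjacent to $\Gamma_n$ via a rough pointwise bound: since $u$ is affine on $T$ and the extension is constant along rays, $|\nabla E_n u|$ on the sliver region $I_T$ is bounded by the (constant) $|\nabla u|_T|$, and the $L^2$ contribution is handled by Taylor's theorem comparing points of $I_T$ to points inside $T$; this gives $\|E_n u\|_{H^1(I_T)} \le C_X\|u\|_{H^1(T)}$ and hence a uniform bound. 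Your seminorm estimate instead computes the sliver contribution exactly in polar coordinates, $|E_n u|^2_{H^1(B_X\setminus B_X^n)}=\int_0^{2\pi}|\tau'(\theta)|^2\log(X/r_n(\theta))\,d\theta$, and then bounds edge-by-edge using $\log(X/r_n)\lesssim\phi_e^2$, the change of variables $|\tau'|\lesssim X|c|$, and the shape-regularity bound $c^2\lesssim L_e^{-2}\|\nabla u\|^2_{L^2(T_e)}$. All of these steps check out, and the net result $|E_n u|^2_{H^1}\lesssim h_n\|u\|^2_{H^1(\Om_n)}$ is actually strictly stronger than the uniform bound claimed in (b). That part of your proof is both correct and cleaner than the paper's.

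There is, however, a genuine logical gap in your treatment of the $L^2$ part. You invoke Lemma~\ref{lem:trace-uniform} to bound $\|\tau\|_{L^2}$, but in the paper that lemma is \emph{proved using} Lemma~\ref{lem:E-n}, via the identity $\gamma_{\Gamma_n}=\hat{\iota}_n^{-1}\gamma_\Gamma E_n$. Citing it here is therefore circular. The fix is straightforward and keeps your edge-by-edge spirit: on each boundary triangle $T_e$ use the standard scaled trace inequality
\begin{equation*}
\|u\|^2_{L^2(e)} \le C_\theta\Big(h_{T_e}^{-1}\|u\|^2_{L^2(T_e)} + h_{T_e}\,\|\nabla u\|^2_{L^2(T_e)}\Big),
\end{equation*}
which requires only shape regularity, and then sum; alternatively, reproduce the paper's Taylor argument on each $I_T$. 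Either removes the circularity without affecting the rest of your proof.
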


\begin{proof}
Let $\cT_n^{\partial}$ denote the set of elements $T \in \cT_n$ with an edge lying in $\Gamma_n$.
For any $T \in \cT_n^{\partial}$, let $I_T$ denote the open region enclosed between $T$ and $\Gamma$, so that $B_X$ may be decomposed as
\begin{equation*}
 B_X =\intt( \overline{B}_X^n \cup \bigg(\bigcup_{T \in \cT_n^{\partial}}I_T \bigg)).
\end{equation*}

Let $n \in \N$ and $u \in \VhND$.
Define $E_n u$ on $B_X^n$ by
\begin{equation}\label{eq:def-E-n-1}
 E_n u|_{B_X^n} = \begin{cases} u & \text{on }\Omega_n \\
                                0 & \text{on }U_n       \end{cases}.
\end{equation}
Furthermore, for any $T \in \cT_n^{\partial}$, define $E_n u$ on $I_T$ by
\begin{equation}\label{eq:def-E-n-2}
 E_n u (p) = u(\iota_n(p)), \qquad p \in I_n(p),
\end{equation}
where $\iota_n(p)$ is defined as the unique point on $\Gamma_n$ with the same angular coordinate as $p$.

Property (a) holds by construction. Furthermore, the map $\iota_n$ defined here coincides with $\iota_n$ from the proof of Lemma \ref{lem:iota-n}, hence property (c)
also holds.

Let $T \in \cT_n^{\partial}$.
Let $p_1$ and $p_2$ denote the two points of intersection between $I_T$ and $\Gamma$.
Since $E_n u$ is radial on $I_T$, we have
\begin{equation*}
 \abs{\nabla E_n u|_{I_T}}  = \abs*{\frac{p_1 - p_2}{|p_1 - p_2|} \cdot \nabla E_n u|_{I_T}} = \abs*{\frac{p_1 - p_2}{|p_1 - p_2|} \cdot \nabla u|_T} \leq |\nabla u|_T|.
\end{equation*}
where the second equality holds since $\nabla u$ is constant on $T$ and $(p_1 - p_2) \nabla u|_T$ only depends on $u(p_1)$ and $u(p_2)$.
Furthermore, by Taylor's theorem,
\begin{equation*}
 |u(p)| \leq |u(q)| + X |\nabla u|_T|, \qquad p \in I_T,
\end{equation*}
where $q$ is the reflection of $p$ with respect to the edge of $T$ connecting $p_1$ and $p_2$. Consequently, we have
\begin{equation}\label{eq:edge-ineq}
 \norm{E_n u}_{H^1(I_T)} \leq C_X \norm{u}_{H^1(T)}.
\end{equation}
Combining \eqref{eq:edge-ineq} with \eqref{eq:def-E-n-1}, we obtain
\begin{equation}
 \norm{E_n u }_{H^1(B_X)} \leq C_X \norm{u}_{H^1(\Omega_n)},
\end{equation}
proving the final required property (b).
\end{proof}

We have the following estimate for the boundary linear interpolation operator $\hat{\Pi}_n$.

\begin{lemma}\label{lem:interpolation}
For any $s \geq 0$, there exists a constant $C_{s,X} > 0$ such that for any $n \in \N$, we have
\begin{equation*}
 \norm{\hat \iota_n \hat \Pi_n P_n g - g}_{H^{-s}(\Gamma)} \leq C_{s,X} h^2_n N^{2 + s
 }_n\norm{g}_{H^{- s}(\Gamma)}, \qquad g \in H^{-s}(\Gamma)
\end{equation*}
and
\begin{equation*}
 \norm{\hat \Pi_n P_n g}_{H^{-s}(\Gamma_n)} \leq C_{s,X}(1 + h_n^2 N^{2 + s
 }_n) \norm{g}_{H^{-s}(\Gamma)},  \qquad g \in H^{-s}(\Gamma).
\end{equation*}

\end{lemma}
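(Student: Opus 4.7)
The plan is to exploit two complementary facts: (i) $P_n g$ is a trigonometric polynomial of degree at most $N_n$ on $\Gamma$, to which an inverse estimate applies; and (ii) $\hat\iota_n \hat\Pi_n$ acts on continuous functions on $\Gamma$ as a piecewise linear interpolant at the angular positions of the corners of $B_X^n$, to which classical one-dimensional finite element interpolation bounds apply. I will shuttle between norms on $\Gamma$ and $\Gamma_n$ using the uniform equivalence supplied by Lemma \ref{lem:iota-n}(c).

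First I would establish the inverse estimate. Writing $P_n g = \sum_{|\alpha|\leq N_n} g_\alpha e_\alpha$ and differentiating twice in arclength $s$ on $\Gamma$, one has
\begin{equation*}
\norm{\partial_s^2 P_n g}_{L^2(\Gamma)}^2 = \sum_{|\alpha|\leq N_n} \br*{\tfrac{|\alpha|}{X}}^4 |g_\alpha|^2.
\end{equation*}
Factoring $|g_\alpha|^2 = \max(|\alpha|,1)^{2s}\cdot\max(|\alpha|,1)^{-2s}|g_\alpha|^2$ and estimating $\max(|\alpha|,1)^{2s}\leq C_{s}N_n^{2s}$ across the summation range yields
\begin{equation*}
\norm{\partial_s^2 P_n g}_{L^2(\Gamma)} \leq C_{s,X}\, N_n^{2+s}\,\norm{P_n g}_{H^{-s}(\Gamma)}\leq C_{s,X}\,N_n^{2+s}\,\norm{g}_{H^{-s}(\Gamma)}.
\end{equation*}

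Next I would apply the standard one-dimensional piecewise linear interpolation error bound. By Lemma \ref{lem:iota-n}(a), $\hat\iota_n \hat\Pi_n P_n g$ coincides with $P_n g$ at the angular positions of the corners of $B_X^n$; the shape regularity Assumption \ref{ass:shape-reg} together with the convexity of $B_X^n$ guarantee that consecutive such angular positions on $\Gamma$ are separated by at most $C_X h_n$. The mismatch between arclength on $\Gamma_n$ and angle on $\Gamma$ is controlled by the uniform regularity of $\iota_n$ already invoked in the proof of Lemma \ref{lem:iota-n}(c). A routine edge-by-edge computation using Taylor's theorem then gives
\begin{equation*}
\norm{\hat\iota_n\hat\Pi_n P_n g - P_n g}_{L^2(\Gamma)}\leq C_X\, h_n^2\,\norm{\partial_s^2 P_n g}_{L^2(\Gamma)}.
\end{equation*}
Combining with the inverse estimate and the trivial embedding $\norm{\cdot}_{H^{-s}(\Gamma)}\leq C_X\norm{\cdot}_{L^2(\Gamma)}$ for $s\geq 0$ yields the first estimate, after adding the high-frequency tail $\norm{(I-P_n)g}_{H^{-s}(\Gamma)}\leq \norm{g}_{H^{-s}(\Gamma)}$ absorbed into the constant.

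For the second estimate, Lemma \ref{lem:iota-n}(c) applied to $\hat\iota_n^{-1}$ gives $\norm{\hat\Pi_n P_n g}_{H^{-s}(\Gamma_n)}\leq C_{s,X}\norm{\hat\iota_n\hat\Pi_n P_n g}_{H^{-s}(\Gamma)}$, and the triangle inequality together with the first estimate and $\norm{P_n g}_{H^{-s}(\Gamma)}\leq \norm{g}_{H^{-s}(\Gamma)}$ produces the claimed bound. The main technical obstacle will be the legitimate identification of $\hat\iota_n\hat\Pi_n$ as a bona fide one-dimensional piecewise linear interpolation with meshsize controlled by $h_n$, which requires carefully interweaving the angle-to-arclength distortion estimates from Lemma \ref{lem:iota-n} with the shape regularity of $\cT_n$; once that is set up, the remainder is a routine combination of inverse and interpolation estimates.
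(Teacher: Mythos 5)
Your proposal is correct and follows essentially the same route as the paper's proof: an inverse estimate for the band-limited function $P_n g$ (supplying the factor $N_n^{2+s}$) combined with a second-order piecewise-linear interpolation bound (supplying $h_n^2$), with the angle/arclength distortion of $\iota_n$ controlled by its uniform $W^{2,\infty}$ regularity — the paper merely carries out the interpolation estimate by hand, edge by edge on $\Gamma_n$ via the mean value theorem and Cauchy--Schwarz, instead of pulling back to $\Gamma$ and invoking the classical one-dimensional result as you do. The only caveat is that your closing step of absorbing the tail $\norm{(I-P_n)g}_{H^{-s}(\Gamma)}\leq\norm{g}_{H^{-s}(\Gamma)}$ ``into the constant'' is not literally valid when $h_n^2 N_n^{2+s}\to 0$, but this mirrors the paper's own ``without loss of generality $g\in P_nH^{-s}(\Gamma)$'' reduction, and in all applications the lemma is only used for $g$ in the range of $P_n$, where both arguments are complete.
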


\begin{proof}
 Firstly, we may assume without loss of generality that $g \in P_n H^{-s}(\Gamma) \subset C^\infty(\Gamma)$.
 Then, by Lemma \ref{lem:iota-n} (c), it suffices to prove that
 \begin{equation*}
  \norm{(\hat \Pi_n - \hat \iota_n^{-1}) g}_{H^{-s}(\Gamma_n)}  \leq C_{s,X}h_n N^{1 + s/2
 }_n  \norm{g}_{H^{-s}(\Gamma)}.
 \end{equation*}
 We shall first estimate the left hand side as
 \begin{equation*}
  \norm{(\hat \Pi_n - \hat \iota_n^{-1}) g}_{H^{-s}(\Gamma_n)} \leq \norm{(\hat \Pi_n - \hat \iota_n^{-1}) g}_{L^2(\Gamma_n)}.
 \end{equation*}

 Let $e \subset \Gamma_n$ be any edge joining two adjacent points of contact $p_1$ and $p_2$ between $\Gamma$ and $\Gamma_n$. It suffices to prove
 \begin{equation}
 \label{eq:e-est-suffice}
  \norm{(\hat \Pi_n - \hat \iota_n^{-1}) g}_{L^2(e)} \leq
  C_{s,X}h_n N^{1 + s/2}_n  \norm{g}_{H^{-s}(\hat{e})}
 \end{equation}
 where $\hat{e} :=\iota_n^{-1}(e)) $ for the bijection $\iota_n: \Gamma \to \Gamma_n$ from the proof of Lemma
 \ref{lem:iota-n}.
 Note that $\iota_n|_{\hat{e}}$ and $\iota^{-1}_n|_{e}$ are smooth and have uniformly bounded $W^{2,\infty}$ norms.

On $e$, we may express $\hat \Pi_n g$ as
\begin{equation}
 \hat \Pi_n g(p) = g(p_1) \frac{|p - p_2|}{|p_2 - p_1|} +  g(p_2) \frac{|p - p_1|}{|p_2 - p_1|}.
\end{equation}
Let $D$ denote the derivative in the direction $p_2 - p_1$.
\begin{equation*}
 D := \frac{p_2 - p_1}{|p_2 - p_2|} \cdot \nabla
\end{equation*}
Then,
\begin{equation}
 D \hat \Pi_n g(t) = \frac{g(p_2) - g(p_1)}{|p_2 - p_1|}.
\end{equation}

Since $p_1$ lies in both $\Gamma$ and $\Gamma_n$,
we have $\hat{\iota}_n^{-1} g(p_1) = \hat{\Pi}_n g(p_1) = g(p_1)$.
Therefore,
\begin{align}\label{eq:point-est-diff}
 |(\hat{\iota}_n^{-1} - \hat{\Pi}_n ) g(p)| & \leq \int_{e} \abs*{\frac{g(p_2) - g(p_1)}{|p_2 - p_1|} - D g(\iota^{-1}_n(t))}\, \d t \nonumber \\
 & \leq h^{1/2}_n \br*{\int_{e} \abs*{\frac{g(p_2) - g(p_1)}{|p_2 - p_1|} - D g(\iota^{-1}_n(t))}^2\, \d t }^{1/2}, \qquad p \in e.
\end{align}
where in the second line we applied the Cauchy-Schwartz inequality.
Since the far right hand side of \eqref{eq:point-est-diff} does not depend on $p$, the left hand side of \eqref{eq:e-est-suffice} may be estimated as
\begin{equation}\label{eq:L2-est-1}
 \norm{(\hat \Pi_n - \hat \iota_n^{-1}) g}^2_{L^2(e)} \leq h_n^2 \int_{e} \abs*{\frac{g(p_2) - g(p_1)}{|p_2 - p_1|} - D g(\iota^{-1}_n(t))}^2\, \d t .
\end{equation}

Focus now on the integrand on the right hand side of \eqref{eq:L2-est-1}.
By the mean value theorem, there exists $\xi \in e$ such that
\begin{equation*}
 D g (\iota_n^{-1}(\xi)) = \frac{g(p_2) - g(p_1)}{|p_2 - p_1|}.
\end{equation*}
Therefore,
\begin{align}
\label{eq:integrand-est}
\abs*{\frac{g(p_2) - g(p_1)}{|p_2 - p_1|} - D g(\iota^{-1}_n(p))}& \leq \int_{\xi}^p \abs{D^2 g(\iota^{-1}_n(t))} dt \nonumber \\
&\leq h_n^{1/2} \br*{\int_e \abs{D^2 g(\iota^{-1}_n(t))}^2 dt }^{1/2}, \qquad p \in e.
\end{align}
Substituting \eqref{eq:integrand-est} into \eqref{eq:L2-est-1} and using the fact that
the far right hand side of \eqref{eq:integrand-est} is independent of $p$, we obtain
\begin{align*}
 \norm{(\hat \Pi_n - \hat \iota_n^{-1}) g}^2_{L^2(e)}
  & \leq h_n^4 \int_e \abs{D^2 g(\iota^{-1}_n(t))}^2 dt \\
  & \leq h_n^4 \norm{\iota_n}_{W^{2,\infty}(\hat e)} \norm{\iota^{-1}_n}^2_{W^{2,\infty}(\hat e)} \norm{g}_{H^2(\Gamma)} \\
  & \leq C_{X} h_n^4 N_n^{4 +2s} \norm{g}^2_{H^{-s}(\Gamma)}
\end{align*}
as required, where the last line holds by the properties of $\iota_n$ already mentioned and the fact that $e'_\alpha = i \alpha e_\alpha$.
\end{proof}

Finally, we may  deduce the following uniform trace operator estimate directly from the properties of the operators $\hat \iota_n$ and $E_n$ proved above.

\begin{lemma}\label{lem:trace-uniform}
For any $s > \f12$, the trace operators $\gamma_{\Gamma_n}$, $n \in \N$, on $\Gamma_n$ satisfy
\begin{equation*}
 \norm{\gamma_{\Gamma_n} u }_{H^{s - \f12}(\Gamma_n)} \leq  C_{s,X} \norm{u}^s_{H^1(\Omega_n)} \norm{u}^{1 - s}_{L^2(\Omega_n)}, \qquad u \in H^1(\Omega_n).
\end{equation*}
\end{lemma}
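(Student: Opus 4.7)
The plan is to transfer the trace problem from the polygonal curve $\Gamma_n$ to the smooth circle $\Gamma$ via an explicit bi-Lipschitz change of variables, reducing the estimate to classical interpolation and trace results on a fixed smooth domain. In polar coordinates, let $X_n(\theta)$ denote the polar radius of $\Gamma_n$ at angle $\theta$; the Hausdorff convergence of $B_X^n$ to $B_X(0)$ built into the construction in Section \ref{subsec:num-method} forces $X_n \to X$ uniformly. The radial scaling
\begin{equation*}
\Psi_n(r, \theta) := (r X_n(\theta)/X, \theta)
\end{equation*}
then maps $B_X(0)$ onto $B_X^n$ bi-Lipschitzly with constants uniform in $n$, and $\Psi_n|_\Gamma$ coincides with the bijection $\iota_n$ from the proof of Lemma \ref{lem:iota-n}. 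Consequently the pullback $v \mapsto v \circ \Psi_n$ is bounded $H^k(B_X^n) \to H^k(B_X(0))$ uniformly in $n$ for $k = 0, 1$.

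Next I would introduce a radial cutoff $\chi \in C^\infty_c(\overline{B_X(0)})$, supported in a thin annular neighbourhood of $\Gamma$ and equal to $1$ near $\Gamma$. Because $U_n \subset B_{X-1}(0)$ and the uniform bi-Lipschitz bounds on $\Psi_n^{-1}$ keep $\Psi_n^{-1}(U_n)$ well inside $B_{X-1/2}(0)$ for $n$ large, the support of $\chi$ can be chosen disjoint from $\Psi_n^{-1}(U_n)$ for all $n$ sufficiently large. Then $w_n := \chi \cdot (u \circ \Psi_n)$, initially defined on $\Psi_n^{-1}(\Omega_n)$, extends by zero to $H^1(B_X(0))$ with uniform bounds $\|w_n\|_{H^k(B_X(0))} \leq C \|u\|_{H^k(\Omega_n)}$ for $k = 0, 1$. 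Classical interpolation between $L^2(B_X(0))$ and $H^1(B_X(0))$ then gives $\|w_n\|_{H^s(B_X(0))} \leq C \|u\|_{H^1(\Omega_n)}^s \|u\|_{L^2(\Omega_n)}^{1-s}$ for $s \in [0,1]$, and the standard trace theorem on the smooth circle $\Gamma$ yields a uniform bound on $\|\gamma_\Gamma w_n\|_{H^{s-1/2}(\Gamma)}$ for $s \in (1/2, 1]$.

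To conclude, I would identify $\gamma_\Gamma w_n = \hat\iota_n(\gamma_{\Gamma_n} u)$, which is immediate since $w_n$ equals $u \circ \Psi_n$ in a neighbourhood of $\Gamma$ and $\Psi_n|_\Gamma = \iota_n$. A final application of Lemma \ref{lem:iota-n}(c) (valid since $s - 1/2 \in (0, 1/2] \subset [0, 1]$) transfers the $H^{s-1/2}(\Gamma)$ bound back to the desired $H^{s-1/2}(\Gamma_n)$ estimate. The central difficulty is the uniformity of all implicit constants in $n$; this reduces to two ingredients already available, namely the uniform convergence $X_n \to X$ (giving uniform bi-Lipschitz bounds on $\Psi_n$ and hence uniform pullback estimates on $H^k$ for $k = 0, 1$, and by interpolation for intermediate orders) and the geometric assumption $U_n \subset B_{X-1}(0)$ (keeping the cutoff region free of the pulled-back obstacle). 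A minor subtlety is that $\Psi_n$ is only Lipschitz, with corners at the vertices of $\Gamma_n$, but the argument only uses change-of-variables at the integer orders $k = 0, 1$ before invoking interpolation to reach the non-integer exponent $s$.
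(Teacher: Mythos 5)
Your argument is correct and reaches the same conclusion, but it replaces the paper's central device with a different one. The paper writes $\gamma_{\Gamma_n} = \hat\iota_n^{-1}\gamma_\Gamma E_n$ and relies on the extension operator $E_n : \VhND \to H^1(B_X)$ constructed in Lemma \ref{lem:E-n}, whose definition (radial push-forward of $u$ across the gaps $I_T$ and zero extension across $U_n$) and uniform $H^1$ bound explicitly exploit the piecewise-affine structure of $u$; the rest of the proof is then the trace theorem on $\Gamma$, Sobolev interpolation for $E_n u$ on $B_X$, and Lemma \ref{lem:iota-n}(c) to return to $\Gamma_n$. You instead construct a global radial bi-Lipschitz diffeomorphism $\Psi_n : B_X(0) \to B_X^n$ extending $\iota_n$, pull $u$ back via $\Psi_n$, localise with a fixed cutoff $\chi$ supported near $\Gamma$ and away from $\Psi_n^{-1}(U_n)$, and only then apply the trace theorem and interpolation on the fixed domain $B_X(0)$ before transferring back with Lemma \ref{lem:iota-n}(c). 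The two routes share the same skeleton (smooth the geometry, trace on $\Gamma$, interpolate, transfer with $\hat\iota_n^{\pm 1}$) but differ in the smoothing step. Your change-of-variables approach has the advantage that it requires nothing about $u$ beyond membership of $H^1(\Omega_n)$, so it genuinely establishes the lemma for all $u \in H^1(\Omega_n)$ as stated, whereas the paper's $E_n$ is only defined and estimated on the finite element space $\VhND$ (which is all that is actually used downstream, but leaves the general statement strictly speaking unproven by the paper's route). Conversely, the paper constructs $E_n$ once and reuses it elsewhere (Lemma \ref{lem:fem-mosco}, Proposition \ref{prop:mosco-to-conv}), so it does not pay the extra price of verifying uniform bi-Lipschitz bounds for $\Psi_n$, which you correctly identify as the one thing that needs care (it follows since $\Gamma_n$ is a convex polygon inscribed in $\Gamma$ so that $\theta \mapsto X_n(\theta)$ is uniformly Lipschitz and $X_n \to X$ uniformly). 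You also correctly restrict to $s \in (1/2,1]$, which is the range actually needed and the only one in which the final interpolation inequality makes sense; the lemma as stated is slightly loosely phrased on this point.
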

\begin{proof}

By construction, we have
\begin{equation}\label{eq:trace-Gam-n}
 \gamma_{\Gamma_n} = \hat{\iota}_n^{-1} \gamma_\Gamma E_n, \qquad n \in \N.
\end{equation}
Consequently,
\begin{align*}
  \norm{\gamma_{\Gamma_n} u }_{H^{s - \f12}(\Gamma_n)}
 & \leq \norm{\hat{\iota}_n^{-1}}_{H^{s-\f12}(\Gamma) \to H^{s-\f12}(\Gamma_n)}\norm{\gamma_\Gamma}_{H^s(B_X) \to H^{s- \f12}(\Gamma)} \norm{E_nu}_{H^s(\Omega_n)} \\
 & \leq C_{s,X} \norm{E_n u}_{H^s(\Omega_n)} \\
 & \leq C_{s,X} \norm{E_n u}^s_{H^1(\Omega_n)} \norm{E_n u}^{1-s}_{L^2(\Omega_n)} \\
 & \leq C_{s,X} \norm{u}^s_{H^1(\Omega_n)} \norm{u}^{1 - s}_{L^2(\Omega_n)},
\end{align*}
where:
\begin{itemize}
 \item the second inequality holds by Lemma \ref{lem:iota-n} (c) and the usual trace theorem \cite[Th. 3.37]{mclean2000strongly},
 \item the third inequality holds by a Sobolev interpolation theorem \cite{BrezisInterpolation} and
 \item the fourth line holds by Lemma \ref{lem:E-n} (b).
\end{itemize}

The extension operator $E_n$ is uniformly bounded from $H^1 \to H^1$ by Lemma \ref{lem:E-n} (b), the operator $\hat{\iota}_n^{-1}$ is uniformly bounded from $H^{\f12} \to H^{\f12}$ by Lemma \ref{lem:iota-n} (c) and $\gamma_\Gamma$ is uniformly bounded from $H^1 \to H^{\f12}$ by the usual trace theorem,
hence the lemma follows.
\end{proof}

\subsection{Matrix and sum truncation errors}\label{subsec:mat-sum}
First, we show convergence of the matrix truncation error, which follows easily from the properties of $K(k)$.

\begin{lemma}\label{lemma:pointwise-to-uniform}
	Let $A\subset\C$ be open and simply connected. Let $(K_n)_{n\in\N}$ be a sequence of analytic, operator-valued functions on $A$ converging pointwise to 0. Then for any compact subset $C\subset A$ one has 
	\begin{align*}
		\sup_{k\in C} \|K_n(k)\| \to 0 \qquad \text{as} \qquad n \to \infty.
	\end{align*}
\end{lemma}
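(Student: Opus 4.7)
The plan is to deduce the conclusion by applying the earlier pointwise-to-uniform result Lemma \ref{lem:point-to-uniform} on a finite cover of the compact set $C$ by bounded convex Cauchy domains. Lemma \ref{lem:point-to-uniform} delivers the desired uniform convergence on such a domain provided that, in addition to pointwise convergence (which is given), the sequence is uniformly bounded on it. Hence the substantive task is to upgrade the pointwise convergence $K_n(k) \to 0$ to local uniform boundedness of $\|K_n(\cdot)\|$ on $A$.

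Since $K_n(z) \to 0$ pointwise, one has $M(z) := \sup_{n} \|K_n(z)\| < \infty$ for every $z \in A$. By analyticity of each $K_n$, the maps $z \mapsto \|K_n(z)\|$ are continuous, so the sets
\begin{equation*}
F_m := \bigcap_{n \in \N} \bigl\{z \in A : \|K_n(z)\| \leq m\bigr\}, \qquad m \in \N,
\end{equation*}
are closed in $A$ and together cover $A$. By the Baire category theorem, some $F_m$ contains a non-empty open disk $B_\rho(z_0) \subset A$ on which $\sup_n \|K_n\| \leq m$. Cauchy's integral formula for operator-valued analytic functions (as used in the proof of Lemma \ref{lem:point-to-uniform}) then bounds the Taylor coefficients of $K_n$ at $z_0$ uniformly in $n$, propagating the uniform bound to every disk concentric with $B_\rho(z_0)$ of slightly smaller radius. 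Chaining such disks along paths in the connected open set $A$ extends local uniform boundedness throughout $A$; this is the operator-valued analogue of Osgood's theorem.

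With local uniform boundedness established, I would cover the compact set $C \subset A$ by finitely many bounded convex open disks $D_1,\dots,D_N$ with $\overline{D_j} \subset A$. On each $D_j$ the hypotheses of Lemma \ref{lem:point-to-uniform} are satisfied, giving $\sup_{z \in D_j} \|K_n(z)\| \to 0$ as $n \to \infty$. Taking the maximum over the finite cover yields $\sup_{z \in C} \|K_n(z)\| \to 0$, as required.

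The main obstacle is the Baire-plus-Cauchy propagation step: the Cauchy estimate alone only transfers a uniform bound to strictly smaller concentric sub-disks, so some bookkeeping is needed to chain overlapping disks and cover a prescribed compact set in finitely many steps. Everything else (reduction to Lemma \ref{lem:point-to-uniform} via a finite cover, and combining the resulting estimates) is routine.
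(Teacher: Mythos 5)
You take a genuinely different route from the paper. The paper's proof is direct: choose a smooth closed curve $\gamma\subset A$ enclosing $C$ with winding number one and $\delta := \inf_{z\in\gamma}\dist(z,C)>0$, apply Cauchy's integral formula to each $K_n$ and take norms so that $\sup_{w\in C}\|K_n(w)\|$ is controlled by $\frac{1}{2\pi\delta}\oint_\gamma\|K_n(z)\|\,\d z$, and pass to the limit under the integral sign by dominated convergence. You instead try to first establish locally uniform boundedness of $(K_n)$ throughout $A$ via a Baire category argument, and only then invoke Lemma \ref{lem:point-to-uniform} on a finite cover of $C$.

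The chaining step in your proposal is not merely bookkeeping; it genuinely fails. The Baire argument produces a single disk $B_\rho(z_0)$ on which $\sup_n\|K_n\|\le m$, and Cauchy's estimates at $z_0$ control the $K_n$ within $B_\rho(z_0)$ but say nothing outside it: an analytic function that is uniformly bounded on a disk can grow arbitrarily fast just beyond its boundary. Running the Baire argument inside every sub-disk of $A$ shows that the set $G$ of points admitting a neighbourhood of uniform boundedness is open and dense in $A$, but that is precisely Osgood's theorem and is the most one can get; the complement $A\setminus G$ is a closed nowhere-dense set that can be nonempty, and an arbitrary compact $C\subset A$ may meet it, so it cannot in general be covered by the disks your construction yields. (A scalar witness: via Runge's theorem one builds polynomials $p_n$ with $p_n(z)\to 0$ for every $z\in\C$ while $|p_n(z_n)|\to\infty$ along some $z_n\to 0$, so pointwise convergence to $0$ alone does not give local uniform boundedness.) Your instinct that some uniform boundedness input is needed is sound --- the paper's dominated-convergence step tacitly requires an $n$-independent integrable majorant on $\gamma$, which pointwise convergence does not by itself supply, and which is available in the actual application (Lemma \ref{lem:E-mat-conv}) because there $\|K_n(k)\|\le 2\|K(k)\|$ with $K$ continuous --- but the propagation device you describe cannot close this gap.
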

\begin{proof}
	Let $C\subset A$ be compact and choose a smooth closed curve $\gamma$ in $A$, which encloses $C$ with winding number 1. Moreover, assume that $\gamma$ is chosen such that $\delta := \inf_{z\in\gamma} \dist(z,C)>0$. By Cauchy's integral formula we have for any $w\in C$
	\begin{align*}
		K_n(w) &= \f{1}{2\pi \i}\oint_\gamma \f{K_n(z)}{z-w}\,dz
	\end{align*}
	and hence
	\begin{align}\label{eq:caychy_integral_consequence}
		\| K_n(w)\| &\leq \f{1}{2\pi\delta}\oint_\gamma \|K_n(z)\| \,dz.
	\end{align}
	By dominated convergence the integral on the right-hand side of \eqref{eq:caychy_integral_consequence} converges to 0 as $n\to\infty$. Since the right-hand side of \eqref{eq:caychy_integral_consequence} is independent of $w$ we conclude that
	\begin{align*}
		\sup_{w\in C}\| K_n(w)\| &\leq \f{1}{2\pi\delta}\oint_\gamma \|K_n(z)\| \,dz \qquad \text{as} \qquad n \to \infty.
	\end{align*}
\end{proof}

\begin{lemma}\label{lem:E-mat-conv}
    For each $k \in \C_-$, the matrix truncation error satisfies
    \begin{equation*}
%     \cE_{\mathrm{mat}}^n(k)  \leq \frac{C(k)}{N_n}
		\cE_{\mathrm{mat}}^n(k) \to 0
    \end{equation*}
    locally uniformly in $\C_-$.
%  for some locally uniform constant $C(k) > 0$ independent of $n$.
%	Consequently, if $\lim_{n \to \infty} N_n = \infty$, then
%	\begin{align*}
%		\cE_{\mathrm{mat}}^n(k)  \to 0 \quad \text{as} \quad  n\to\infty, \qquad k\in\C_-.
%	\end{align*}
\end{lemma}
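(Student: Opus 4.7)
The plan is to combine two ingredients: the compactness of $K(k)$ (Proposition \ref{prop:T-decomp}) and the fact that $P_n \to I$ strongly on $L^2(\Gamma)$ as $n \to \infty$ (which holds because $\{e_\alpha\}_{\alpha \in \Z}$ is an orthonormal basis for $L^2(\Gamma)$).

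First, I would establish pointwise convergence $\Emat(k) \to 0$ for each fixed $k \in \C_-$. Writing
\begin{equation*}
K(k) - P_n K(k) P_n = (I - P_n) K(k) + P_n K(k)(I - P_n),
\end{equation*}
it suffices to show that both $(I - P_n)K(k)$ and $K(k)(I - P_n)$ converge to zero in operator norm. For the first term, compactness of $K(k)$ means that $K(k)$ maps the unit ball of $L^2(\Gamma)$ into a precompact set, on which the strong convergence $I - P_n \to 0$ becomes uniform, yielding $\|(I - P_n) K(k)\|_{L^2 \to L^2} \to 0$. For the second term, taking adjoints and using that $(I - P_n)^* = I - P_n$ together with the compactness of $K(k)^*$ gives $\|K(k)(I - P_n)\|_{L^2 \to L^2} = \|(I - P_n) K(k)^*\|_{L^2 \to L^2} \to 0$ by the same argument. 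Since $\|P_n\|_{L^2 \to L^2} = 1$, the triangle inequality then gives $\Emat(k) \to 0$.

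To promote this pointwise convergence to local uniform convergence in $k$, I would invoke Lemma \ref{lemma:pointwise-to-uniform}. The operator-valued functions $K_n(k) := K(k) - P_n K(k) P_n$ are analytic on $\C_-$, because $K(k)$ is analytic by Proposition \ref{prop:T-decomp} and $P_n$ does not depend on $k$. Since $\C_-$ is open and simply connected, the hypotheses of Lemma \ref{lemma:pointwise-to-uniform} are met, and we conclude $\sup_{k \in C} \Emat(k) \to 0$ for every compact $C \subset \C_-$, which is the desired locally uniform convergence.

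There is no substantive obstacle here; the work has already been done in Proposition \ref{prop:T-decomp} (which supplies the compact-operator decomposition of $T(k)$) and in Lemma \ref{lemma:pointwise-to-uniform} (which handles the passage from pointwise to local uniform convergence). The only mildly subtle point is remembering to handle both sides of the sandwich $P_n K(k) P_n$ via the adjoint trick, rather than hoping that strong operator convergence of $P_n$ alone suffices.
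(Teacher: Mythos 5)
Your proof is correct and follows essentially the same route as the paper's: the same sandwich decomposition (up to the mirror-image choice $(I-P_n)K(k)+P_nK(k)(I-P_n)$ versus the paper's $K(k)(I-P_n)+(I-P_n)K(k)P_n$), the same appeal to compactness of $K(k)$ for the pointwise operator-norm convergence, and the same invocation of Lemma~\ref{lemma:pointwise-to-uniform} for the upgrade to local uniform convergence. The only difference is that you spell out the adjoint trick for the term $K(k)(I-P_n)$ explicitly, which the paper treats as immediate from compactness.
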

\begin{proof}
	Let $k\in\C_-$. Adding and subtracting $K(k)P_n$, we have
	\begin{align*}
		K(k) - P_nK(k)P_n = K(k)(I - P_n) + (I - P_n)K(k)P_n
	\end{align*}
	By Lemma \ref{lem:K-prop}, we have that $K(k)$ is compact and uniformly bounded in $\C_-$. For fixed $k\in\C_-$, by compactness of $K(k)$ and strong convergence of $I - P_n$ to 0 it follows that 
	\begin{align*}
		K(k)(I - P_n) &\to 0
		\\
		(I - P_n)K(k)P_n &\to 0
	\end{align*}
	in operator norm. Thus $\|K(k) - P_nK(k)P_n\| \to 0$ pointwise in $\C_-$.
	The desired local uniform convergence now immediately follows from Lemma \ref{lemma:pointwise-to-uniform}.
%	By Lemma \ref{lem:K-prop}, we have that $\cN K(k)$, $K(k) \cN$ are locally uniformly bounded as operators from $L^2 \to L^2$, and so
%	\begin{multline*}
%      \norm{K(k) - P_nK(k)P_n}_{L^2 \to L^2} \\ \leq \norm{K(k) \cN}_{L^2 \to L^2} \norm{\cN^{-1}(I - P_n)}_{L^2 \to L^2} + \norm{\cN^{-1}(I - P_n)}_{L^2 \to L^2} \norm{\cN K(k)}_{L^2 \to L^2} \leq \frac{C(k)}{N_n}.
%    \end{multline*}
\end{proof}

Next, we focus on the sum truncation error. First we perform an eigenfunction expansion for the discretisation $\hMinn(k)$. Recall that $d_n = \dim(\VhND)$.

\begin{lemma}
\label{lem:expansion-for-S_n}
	Then for any $g \in P_nH^{-\f12}(\Gamma)$ one has
	\begin{align}\label{eq:expansion_for_S_n}
		\hMinn(k)g = \sum_{m=1}^{d_n} \frac{1}{\mu_m^n - k^2} (\hat \Pi_n  g, \gamma_{\Gamma_n}w_m^n)_{L^2(\Gamma_n)} \gamma_{\Gamma_n} w_m^n
	\end{align}
	where $\mu_m^n$ and $w_m^n$ are the FEM approximations for the eigenvalues and eigenfunctions of $- \DND$ (cf. \eqref{eq:fem-eigs}).

\end{lemma}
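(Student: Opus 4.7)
The proof is a direct spectral computation using the FEM eigenbasis, so the plan is short. The key observation is that the FEM eigenfunctions $\{w_m^n\}_{m=1}^{d_n}$ from \eqref{eq:fem-eigs} form an orthonormal basis of $\VhND$ (with respect to the $L^2(\Omega_n)$ inner product), since they are eigenfunctions of a symmetric positive definite discrete problem on a finite-dimensional space. Consequently, I can expand the unknown FEM solution $u_n = S_n(k)g$ as
\begin{equation*}
u_n = \sum_{m=1}^{d_n} c_m\, w_m^n, \qquad c_m \in \C.
\end{equation*}

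The strategy is to determine the coefficients $c_m$ by testing the FEM weak formulation \eqref{eq:S-n-defn} defining $S_n(k)$ against the basis functions $w_j^n \in \VhND$. Substituting $\phi = w_j^n$ and using the expansion of $u_n$, the left-hand side becomes $\sum_m c_m \langle \nabla w_m^n, \nabla w_j^n\rangle_{L^2(\Omega_n)}$, which by the FEM eigenvalue relation in \eqref{eq:fem-eigs} and orthonormality reduces to $c_j \mu_j^n$. The right-hand side yields $k^2 c_j + (\hat\Pi_n g, \gamma_{\Gamma_n} w_j^n)_{L^2(\Gamma_n)}$. Solving gives
\begin{equation*}
c_j = \frac{1}{\mu_j^n - k^2}\,(\hat\Pi_n g, \gamma_{\Gamma_n} w_j^n)_{L^2(\Gamma_n)},
\end{equation*}
which is well defined provided $k^2 \notin \{\mu_m^n\}_{m=1}^{d_n}$; in particular this is automatic for $k \in \C_-$ since the FEM eigenvalues $\mu_m^n$ are real and non-negative.

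Finally, applying the trace operator $\gamma_{\Gamma_n}$ to $u_n$ and invoking the identity $\hMinn(k) = \gamma_{\Gamma_n} S_n(k)$ yields exactly the stated formula \eqref{eq:expansion_for_S_n}. There is no genuine obstacle here; the only delicacy is to verify that the quadratic form in the FEM weak formulation admits $k$-independent diagonalisation in the $\{w_m^n\}$ basis (so that a single FEM computation suffices to evaluate $\hMinn(k)$ for all $k$), which is precisely why the formula is stated for arbitrary $k \in \C_-$ rather than only the fixed parameter $k_0$ used in \eqref{eq:u-in-n-defn}.
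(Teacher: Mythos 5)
Your proposal is correct and follows essentially the same route as the paper's proof: expand $u_n = S_n(k)g$ in the FEM eigenbasis $\{w_m^n\}$, combine the weak formulation \eqref{eq:S-n-defn} with the eigenvalue relation \eqref{eq:fem-eigs} to identify each coefficient as $(\mu_m^n - k^2)^{-1}(\hat\Pi_n g, \gamma_{\Gamma_n} w_m^n)_{L^2(\Gamma_n)}$, and then apply $\gamma_{\Gamma_n}$. The only cosmetic difference is that you test the expanded solution against each $w_j^n$ while the paper computes the coefficients $\inner{u_n, w_m^n}_{L^2(\Omega_n)}$ directly; the content (including the implicit use of $L^2$-orthonormality of the discrete eigenfunctions and the fact that $k^2 \notin \{\mu_m^n\}$ for $k \in \C_-$) is the same.
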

\begin{proof}
	Let $g \in P_nH^{-\f12}(\Gamma)$ and $u := S_n(k)g$. The eigenfunctions $w_m^n$ form a basis for $\VhND$ and thus
	\begin{align}\label{eq:basis_expansion_S_n}
		u = \sum_{m=1}^{d_n} \inner{u,w_m^n}_{L^2(\Omega_n)}w_m^n.
	\end{align}
	By \eqref{eq:S-n-defn} and \eqref{eq:fem-eigs} we have
	\begin{align}
		\inner{\nabla u,\nabla w_m^n}_{L^2(\Omega_n)} - k^2\inner{u,w_m^n}_{L^2(\Omega_n)} &= (\hat \Pi_n g,\gamma_{\Gamma_n}w_m^n)_{L^2(\Gamma_n)}
		\label{eq:galerkin_Sn_with_eigenfunction}
		\\
		\inner{\nabla w_m^n,\nabla u}_{L^2(\Omega_n)} &= \mu_m^n \inner{w_m^n,u}_{L^2(\Omega_n)}
		\nonumber
	\end{align}
	and since the $\mu_m^n$ are real
	\begin{align}\label{eq:u_U_mu}
		\inner{\nabla u,\nabla w_m^n}_{L^2(\Omega_n)} &= \mu_m^n \inner{u,w_m^n}_{L^2(\Omega_n)}
	\end{align}
	Substituting \eqref{eq:u_U_mu} into \eqref{eq:galerkin_Sn_with_eigenfunction}, we get
	\begin{align}\label{eq:expression_for_uw_mn}
		\inner{u,w_m^n}_{L^2(\Omega_n)} = \f{1}{\mu_m^n - k^2}(\hat \Pi_n  g,\gamma_{\Gamma_n}w_m^n)_{L^2(\Gamma_n)}.
	\end{align}
	The proof is completed by substituting \eqref{eq:expression_for_uw_mn} into \eqref{eq:basis_expansion_S_n}.
\end{proof}

The next lemma shows convergence of the sum truncation error.
Our strategy shall be to  utilise Weyl's law in conjunction with the min-max principal to get a lower bound for the eigenvalues $\mu_m^n$.
\begin{prop}\label{prop:E-sum}
For any $s \in (0,1)$ and $k \in \C_-$, the sum truncation error satisfies
\begin{equation}
\label{eq:Esum-est}
 \Esum(k) \leq C_{s,X,k_0}(k) \br*{1 + h_n^2 N_n^{\f52}}^2 N_n^s \abs*{J_n^{-\frac{s}{2}} - d_n^{-\frac{s}{2}}}
\end{equation}
for some locally uniform constant $C_{s,X,k_0}(k)> 0$ independent of $n$. Consequently, if the limit $\lim_{n \to \infty} N_n J_n^{-\f12} = 0$ holds and the sequence $(h_n N^{\f54}_n)_{n\in \N}$ is bounded, then
\begin{equation*}
 \Esum(k) \to 0 \quad \text{as} \quad n \to \infty, \qquad k \in \C_-.
\end{equation*}
\end{prop}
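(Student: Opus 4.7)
The plan is to exploit the fact that $\Esum(k)$ measures exactly the operator norm of the truncation tail of the eigenfunction expansion in Lemma \ref{lem:expansion-for-S_n}. Comparing the matrix elements of $\hat \Pi_n^* \hMinn(k)$ and $\Min^n(k)$, the difference is precisely the terms with index $m > J_n$, so the operator $P_n(\hat\Pi_n^* \hMinn(k) - \Min^n(k))P_n$ may be written as a sum of rank-one operators
\begin{equation*}
q \longmapsto \sum_{m=J_n+1}^{d_n} \frac{k^2 - k_0^2}{(\mu_m^n - k^2)(\mu_m^n - k_0^2)}\,(q,v_m)_{L^2(\Gamma)}\, v_m, \qquad v_m := P_n \hat \Pi_n^* \gamma_{\Gamma_n} w_m^n.
\end{equation*}
A direct triangle-inequality bound for the $H^{-1/2}\to H^{1/2}$ operator norm then reduces the proposition to estimating
\begin{equation*}
\Esum(k) \leq |k^2-k_0^2|\,\sum_{m=J_n+1}^{d_n} \frac{\|v_m\|_{H^{1/2}(\Gamma)}^2}{|\mu_m^n - k^2|\,|\mu_m^n - k_0^2|}.
\end{equation*}

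The heart of the argument is a Sobolev-interpolation estimate on $\|v_m\|_{H^{1/2}(\Gamma)}^2$ that trades trace regularity for controlled growth in both $N_n$ and $\mu_m^n$, tuned by the parameter $s \in (0,1)$. Expressing the $H^{1/2}$-norm by duality and using the adjoint relation $(\hat\Pi_n^* g, f)_{L^2(\Gamma)} = (g,\hat\Pi_n f)_{L^2(\Gamma_n)}$, one applies Lemma \ref{lem:interpolation} with Sobolev parameter $\sigma := \tfrac{1}{2} - \tfrac{s}{2}$, together with the elementary inverse estimate $\|f\|_{H^{-\sigma}(\Gamma)} \leq N_n^{s/2}\|f\|_{H^{-1/2}(\Gamma)}$ valid for $f \in P_n L^2(\Gamma)$, which produces the factor $C(1 + h_n^2 N_n^{5/2})\,N_n^{s/2}$ on the $\hat\Pi_n^*$ side. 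On the trace side, Lemma \ref{lem:trace-uniform} applied with index $1 - \tfrac{s}{2}$ combined with the normalisations $\|w_m^n\|_{L^2(\Omega_n)} = 1$ and $\|\nabla w_m^n\|_{L^2(\Omega_n)}^2 = \mu_m^n$ yields
\begin{equation*}
\|\gamma_{\Gamma_n} w_m^n\|_{H^{1/2 - s/2}(\Gamma_n)} \leq C (1 + \mu_m^n)^{(1-s/2)/2}.
\end{equation*}
Pairing the two estimates and squaring produces $\|v_m\|_{H^{1/2}(\Gamma)}^2 \leq C (1 + h_n^2 N_n^{5/2})^2\,N_n^s\,(1+\mu_m^n)^{1-s/2}$.

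With this bound in hand, each summand decays like $(\mu_m^n)^{-1 - s/2}$ for large $m$, so the remaining task is to verify $\sum_{m=J_n+1}^{d_n}(\mu_m^n)^{-1-s/2} \leq C(J_n^{-s/2} - d_n^{-s/2})$ with constants uniform in $n$. This follows from a Weyl-type lower bound $\mu_m^n \geq c\,m$ independent of $n$: by the min-max principle the FEM eigenvalue $\mu_m^n$ dominates the $m$-th eigenvalue of the continuous Neumann--Dirichlet problem on $\Omega_n$, which by Weyl's law obeys a lower bound with constant depending only on $|\Omega_n| \leq |B_X(0)|$. An elementary integral comparison $\int_{J_n}^{d_n} x^{-1-s/2}\,dx = \tfrac{2}{s}(J_n^{-s/2} - d_n^{-s/2})$ then yields \eqref{eq:Esum-est}. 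For the convergence conclusion, $h_n^2 N_n^{5/2} = (h_n N_n^{5/4})^2$ is bounded by hypothesis, while $N_n^s J_n^{-s/2} = (N_n J_n^{-1/2})^s \to 0$ under the other hypothesis. The main technical obstacle will be substantiating the uniform Weyl lower bound on the FEM eigenvalues, since the $\Omega_n$ are only rough approximations of $\Omega$; Assumption \ref{ass:Un} ensures the relevant geometric quantities (in particular $|\Omega_n|$) remain uniformly controlled, which is the crucial ingredient.
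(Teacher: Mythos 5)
Your proposal follows precisely the route the paper takes: identify the difference $P_n(\hat\Pi_n^*\hMinn(k)-\Min^n(k))P_n$ with the tail of the eigenfunction expansion from Lemma \ref{lem:expansion-for-S_n}, bound each inner product by a duality pairing in $H^{\pm(\f12-\f s2)}(\Gamma_n)$, invoke Lemma \ref{lem:interpolation} together with the Fourier inverse estimate $\norm{P_n g}_{H^{-\f12+\f s2}}\leq N_n^{s/2}\norm{g}_{H^{-\f12}}$ on one side and Lemma \ref{lem:trace-uniform} with $\norm{w_m^n}_{L^2}=1$, $\norm{\nabla w_m^n}_{L^2}^2=\mu_m^n$ on the other, and finally sum against $|\mu_m^n-k^2|^{-1}|\mu_m^n-k_0^2|^{-1}\geq C(k)^{-1}|1+\mu_m^n|^{-2}$ using a Weyl-type lower bound and an integral comparison. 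The one spot where your write-up diverges is the Weyl lower bound itself: you pass through the continuous ND eigenvalues on $\Omega_n$ and appeal to Weyl's law ``with constant depending only on $|\Omega_n|\leq|B_X(0)|$,'' but Weyl's law gives an asymptotic, not a non-asymptotic lower bound uniform over a family of increasingly rough polygons, so the claimed uniformity is not supported by what you cite (you yourself flag this as the ``crucial ingredient''). The paper instead compares the Rayleigh quotient directly to the Neumann Laplacian on the fixed reference ball $B_X$ via the form-domain inclusion $\VhND\subset\HND\subseteq H^1(B_X)$ (using extension by zero), so that the constant $C_X$ in $\mu_m^n\geq\mu_m(B_X)\geq C_X m$ manifestly does not depend on $n$; this is the cleaner way to secure the uniformity you were worried about.
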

\begin{proof}
If suffices to show that
\begin{equation*}
\abs*{\br{P_n(\hat \Pi_n^* \hMinn(k) - \Min^n(k))P_n g_1, g_2}_{L^2(\Gamma)}} \leq \cC_n \norm{g_1}_{H^{-\f12}(\Gamma)}\norm{g_2}_{H^{-\f12}(\Gamma)}
\end{equation*}
for all $g_1, g_2 \in H^{-\f12}(\Gamma)$, where $\cC_n > 0$ denotes the right hand side of \eqref{eq:Esum-est}.
using the definition of $\Min^n(k)$ (see \eqref{eq:a-n-matrix-elements} and \eqref{eq:Minn-defn}), we have
\begin{align*}
 \br{P_n(&\hat \Pi_n^* \hMinn(k)  - \Min^n(k))P_n g_1, g_2}_{L^2(\Gamma)}  = \br{ \hMinn(k)P_n g_1, \hat \Pi_n P_n g_2}_{L^2(\Gamma_n)} - \br{\Min^n(k)P_n g_1, P_n\hat  g_2}_{L^2(\Gamma)}   \\
 & = \br{(\hMinn(k) - \hMinn(k_0))P_n g_1, P_n\hat \Pi_n g_2}_{L^2(\Gamma_n)} -  \sum_{m=1}^{J_n} \frac{k^2 - k_0^2}{(\mu^n_m - k^2)(\mu^n_m - k_0^2)}(\hat \Pi_n e_\alpha,\gamma_{\Gamma_n} w^n_m)_{L^2(\Gamma_n)}\gamma_{\Gamma_n} w^n_m.
\end{align*}
Using the eigenfunction expansion for $\hMinn(k)$ from Lemma \ref{lem:expansion-for-S_n}, we have
\begin{equation*}
 (\hMinn(k) - \hMinn(k_0))P_n g_1 = \sum_{m=1}^{d_n} \frac{k^2 - k_0^2}{(\mu^n_m - k^2)(\mu^n_m - k_0^2)}(\hat \Pi_n P_n g_1,\gamma_{\Gamma_n} w^n_m)_{L^2(\Gamma_n)}\gamma_{\Gamma_n} w^n_m.
\end{equation*}
Consequently, we have
\begin{multline}\label{eq:corrector}
\br{P_n(\hat \Pi_n^* \hMinn(k)  - \Min^n(k))P_n g_1, g_2}_{L^2(\Gamma)} \\ = \sum_{m=J_n+1}^{d_n} \frac{k^2 - k_0^2}{(\mu^n_m - k^2)(\mu^n_m - k_0^2)}(\hat \Pi_n P_n g_1,\gamma_{\Gamma_n} w^n_m)_{L^2(\Gamma_n)}(\gamma_{\Gamma_n} w^n_m,\hat \Pi_n P_n g_2)_{L^2(\Gamma_n)}.
\end{multline}

The remainder of the proof consists in estimating the sum on the right hand side of \eqref{eq:corrector}. Focus first on the inner products and estimate using duality pairing,
\begin{equation}\label{eq:inner-in-sum}
 \abs*{(\hat \Pi_n P_n g_1,\gamma_{\Gamma_n} w^n_m)_{L^2(\Gamma_n)}} \leq  \norm{\hat \Pi_n P_n g_1}_{H^{-\f12+\frac{s}{2}}(\Gamma_n)} \norm{\gamma_{\Gamma_n} w^n_m}_{H^{\f12 - \frac{s}{2}}(\Gamma_n)}
\end{equation}
By  Lemma \ref{lem:interpolation}, we have
\begin{equation*}
 \norm{\hat \Pi_n P_n g_1}_{H^{-\f12+\frac{s}{2}}(\Gamma_n)} \leq C_{X,s} (1 + h_n^2 N_n^{\f52} )
 \norm{P_n g_1}_{H^{-\f12 + \frac{s}{2}}(\Gamma)}
 \leq C_{s,X}(1 +h_n^2 N_n^{\f52})
  N_n^{\frac{s}{2}} \norm{g_1}_{H^{-\f12}(\Gamma)}.
\end{equation*}
Furthermore, using the uniform trace estimate Lemma \ref{lem:trace-uniform} and the equation \eqref{eq:fem-eigs} for $(\mu_m^n,w_m^n)$, we have
\begin{equation*}
 \norm{\gamma_{\Gamma_n} w^n_m}_{H^{\f12 - \frac{s}{2}}(\Gamma_n)}
 \leq C_{s,X} \norm{ w^n_m}_{H^{1}(\Omega_n)}^{1 - \frac{s}{2}} \norm{ w^n_m}_{L^2(\Omega_n)}^{ \frac{s}{2}} = C_{s,X} \abs{1 + \mu_m^n}^{\frac{1}{2} - \frac{s}{4}}.
\end{equation*}
Substituting back into \eqref{eq:inner-in-sum}, we obtain the estimate
\begin{equation}
\label{eq:inner-in-sum-2}
\abs*{(\hat \Pi_n P_n g_1,\gamma_{\Gamma_n} w^n_m)_{L^2(\Gamma_n)}} \leq C_{s,X} \abs{1 + \mu_m^n}^{\frac{1}{2} - \frac{s}{4}} (1 + h_n^2 N_n^{\f52})
  N_n^{\frac{s}{2}} \norm{g_1}_{H^{-\f12}(\Gamma)}.
\end{equation}
A similar estimate holds for the other inner product on the right hand side of   \eqref{eq:corrector}.

In addition, $k$ and $k_0$ have non-zero imaginary part hence $|\mu_m^n - k^2| \geq C(k) |1 + \mu_m^n|$.
Using this, along with estimate \eqref{eq:inner-in-sum-2} and the formula \eqref{eq:corrector}, we obtain
\begin{equation}\label{eq:corrector-est}
 |\br{P_n(\hat \Pi_n^* \hMinn(k)  - \Min^n(k))P_n g_1, g_2}_{L^2(\Gamma)}| \leq C_{s,X,k_0}(k) (1 + h_n^2 N_n^{\f52})^2
  N_n^{s} \sum_{j=J_n+1}^{d_n}\frac{1}{|1 + \mu_m^n|^{1 + \frac{s}{2}}}.
\end{equation}
Next, observe that
\begin{equation*}
 \VhND \subset \HND \subseteq H^1(B_X).
\end{equation*}
Consequently, by the min-max principal and the Weyl law for the Neumann Laplacian on the ball, we have
\begin{equation}\label{eq:eig-lower}
 \mu_m^n \geq \mu_m(\Omega) \geq \mu_m(B_X) \geq C_X m.
\end{equation}
The proof is completed by substituting \eqref{eq:eig-lower} into \eqref{eq:corrector-est} and bounding the sum by an appropriate integral.
\end{proof}

\subsection{Finite element error}\label{subsec:mosco-fem}
In this subsection, we prove convergence of the the finite element solution operator $S_n(k)$, which shall later be used to prove convergence of $\Esol(k)$.
First, we establish Mosco convergence of the finite element space $\VhND$.

\begin{lemma}\label{lem:fem-mosco}
 If $H^1_0(U_n^c) \Mto H^1_0(U^c)$ as $n \to \infty$, then
 \begin{equation*}
  E_n \VhND \Mto \HND \quad \text{to} \quad n \to \infty.
 \end{equation*}
\end{lemma}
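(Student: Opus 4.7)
The plan is to invoke Lemma \ref{lem:W_n+} in the ambient Hilbert space $\cH := H^1(B_X)$, identifying $\cW := \HND$ with the closed subspace $\{u \in H^1(B_X) : u|_U = 0 \text{ a.e.}\}$ via extension by zero, setting $\cW_n := E_n \VhND$, and choosing the intermediate sequence
\[
  \cW_n^+ := \{u \in H^1(B_X) : u|_{U_n} = 0 \text{ a.e.}\}.
\]
The inclusion $\cW_n \subset \cW_n^+$ is immediate from Lemma \ref{lem:E-n}(a) and the construction of $E_n$.

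First I would establish $\cW_n^+ \Mto \cW$ using the hypothesis $H^1_0(U_n^c) \Mto H^1_0(U^c)$ in $H^1(\R^2)$. For the recovery property, given $v \in \cW$, pick $\chi \in C_c^\infty(B_X)$ with $\chi \equiv 1$ on $B_{X-1/2}$. The zero extension of $\chi v$ to $\R^2$ lies in $H^1_0(U^c)$, so the Mosco hypothesis provides $\tilde v_n \in H^1_0(U_n^c)$ with $\tilde v_n \to \chi v$ in $H^1(\R^2)$; since $U_n \subset B_{X-1} \subset \{\chi \equiv 1\}$, the function $v_n := \tilde v_n|_{B_X} + (1-\chi) v$ lies in $\cW_n^+$ and converges to $v$ in $H^1(B_X)$. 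For the weak-limit property, given $v_{n_j} \in \cW_{n_j}^+$ with $v_{n_j} \wto v$ in $H^1(B_X)$, apply a bounded extension operator $\cE: H^1(B_X) \to H^1(\R^2)$ followed by a cutoff $\tilde \chi \in C_c^\infty(\R^2)$ with $\tilde \chi \equiv 1$ on $B_X$. The functions $\tilde \chi \cE v_{n_j}$ vanish on $U_{n_j}$, have compact support, and converge weakly to $\tilde \chi \cE v$ in $H^1(\R^2)$; the Mosco hypothesis then yields $\tilde \chi \cE v \in H^1_0(U^c)$, so in particular $v|_U = (\tilde \chi \cE v)|_U = 0$ and hence $v \in \cW$.

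Next I would verify condition (i) of Lemma \ref{lem:W_n+} by choosing the dense subspace
\[
  \tilde{\cW} := \{\phi \in C^\infty(\overline{B_X}) : \phi \equiv 0 \text{ on an open neighborhood of } U\} \subset \cW,
\]
whose density in $\cW$ follows from the density of $C_c^\infty(U^c)$ in $H^1_0(U^c)$ combined with the cutoff-and-reassembly trick used above. For $\phi \in \tilde{\cW}$ vanishing on an open $V \supset U$, Assumption \ref{ass:Un} guarantees $U_n \subset V$ for all large $n$, so $\phi \equiv 0$ on a neighborhood of $\partial U_n$. Let $I_n \phi \in V^n(\Omega_n)$ denote the nodal Lagrange interpolant of $\phi$ on $\cT_n$; for large $n$ it vanishes at every node on $\partial U_n$, hence $I_n \phi \in \VhND$. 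Standard interpolation estimates together with shape regularity (Assumption \ref{ass:shape-reg}) yield $\|I_n \phi - \phi\|_{H^1(\Omega_n)} \to 0$.

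The remaining task is to upgrade this to $\|E_n I_n \phi - \phi\|_{H^1(B_X)} \to 0$. Splitting $B_X = B_X^n \cup (B_X \setminus B_X^n)$: on $B_X^n$ the difference equals $I_n \phi - \phi$ on $\Omega_n$ (which goes to zero) and $0$ on $U_n$ (where $\phi \equiv 0$ eventually), and Lemma \ref{lem:E-n}(b) bounds the $H^1$ contribution of $E_n(I_n\phi - \phi|_{\Omega_n})$ by a multiple of $\|I_n \phi - \phi\|_{H^1(\Omega_n)}$. On the shrinking strip $B_X \setminus B_X^n$, I would sum the edge-wise estimate $\|E_n u\|_{H^1(I_T)} \leq C_X \|u\|_{H^1(T)}$ from the proof of Lemma \ref{lem:E-n} over boundary triangles $T$, whose total area is $O(h_n)$, giving a bound of order $h_n^{1/2}\|\phi\|_{C^1}$; meanwhile $\|\phi\|_{H^1(B_X \setminus B_X^n)} = o(1)$ is immediate from the smoothness of $\phi$ and the shrinking of the strip. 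The main technical obstacle is precisely this last step: controlling the radial extension contribution of $E_n$ on the thin annular strip near $\Gamma$ while simultaneously accommodating the FEM interpolation error, which requires combining Lemma \ref{lem:E-n} with shape regularity in a quantitative way.
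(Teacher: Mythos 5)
Your argument is correct and follows essentially the same route as the paper's own proof: both invoke Lemma \ref{lem:W_n+} in $\cH = H^1(B_X)$ with the intermediate sequence $\cW_n^+$ of $H^1$ functions vanishing on $U_n$, verify $\cW_n^+ \Mto \HND$ by extending to $H^1(\R^2)$ and appealing to the hypothesis $H^1_0(U_n^c) \Mto H^1_0(U^c)$, and verify condition (i) of that lemma by nodal interpolation of smooth test functions vanishing near $U$, combined with the uniform boundedness of $E_n$. The quantitative handling of the annular strip $B_X \setminus B_X^n$ differs slightly in bookkeeping from the paper (which splits $\Om$ into $\Om \cap \Om_n$ and $\Om \setminus \Om_n$ and uses a Morrey--H\"older bound for the overshoot), but the content is the same.
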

\begin{proof} Let $\cH = H^1(B_X)$ and  $\tilde{\cW} := \set{\phi|_{\Om}: \phi \in C^\infty_c(U^c)} \subset \HND$.
By Lemma \ref{lem:W_n+}, it suffices to show that the following.
\begin{enumerate}[label=(\roman*)]
 \item For every $u \in \tilde{\cW}$, there exists a sequence $u_n \in E_n \VhND$, $n \in \N$, with $\norm{u - u_n}_{H^1(B_X)} \to 0$ as $n \to \infty$.
 \item We have $\cW^+ := \set{u|_{B_X}: u \in H^1_0(U_n^c)} \Mto \HND$ as $n \to \infty$.
\end{enumerate}

Focusing on (i), let $u \in \tilde \cW$. Let $u_n = E_n \Pi_n u$, $n \in \N$,
where the interpolation is defined since $u$ may be regarded as a function in $C(\Om_n)$ via extension by zero. Then, we have
\begin{align}\label{eq:H1-lim-lem-mosco}
 \norm{u - u_n}_{H^1(\Om)} & \leq \norm{u}_{H^1(\Om \bs \Om_n)} + \norm{u_n}_{H^1(\Om \bs \Om_n)} + \norm{u - \Pi_n u}_{H^1(\Om_n)} \nonumber\\
 & \leq \norm{u}_{H^1(\Om \bs \Om_n)} + \norm{u_n}^{\f12}_{H^3(\Om \bs \Om_n )} |\Om \bs \Om_n|^{\f12} + C_\theta h_n \norm{u}_{H^2(B_X)}
\end{align}
where in the first inequality we used the fact that $u_n  = \Pi_n u $ on $\Om_n$ and in the second inequality we used H\"older's and Morrey's inequalities
(for the second term), as well as a standard interpolation inequality (for the third term).
Furthermore, the sequence $\norm{u_n}_{H^3(\Om \bs \Om_n )}$, $n \in \N$, is bounded by the boundedness property of the extension operators $E_n$, $n \in \N$, (Lemma \ref{lem:E-n} (b)) and interpolation inequalities.
Therefore, observe that each term on the right hand side of \eqref{eq:H1-lim-lem-mosco} tends to zero as $n \to \infty$, establishing (ii).

To show (ii), we need to verify the two hypotheses of Mosco convergence in Definition \ref{def:mosco}. We shall utilise a bounded extension operator $E: H^1(B_X) \to H^1(\R^2)$.

Firstly, for any $u \in \HND$, we need to show that there exists a sequence $u_n \in \cW^+$,
$n \in \N$, such that $u_n \to u$ in $H^1(B_X)$. By Mosco convergence of $(H^1_0(U_n^c))_{n \in \N}$, and since we have since $E u \in H^1_0(U^c)$, there exists a sequence $v_n \in H^1_0(U_n^c)$, $n \in \N$, such that $v_n \to E u$ in
$H^1(\R^2)$. We obtain the desired sequence by letting $u_n := v_n|_{B_X}$

Secondly, let $u_{n_j} \in V^{n_j}_{\mathrm{ND}}(\Om_{n_j})$, $j \in \N$, be some subsequence such that $u_{n_j} \wto u $ as $j \to \infty$ in $H^1(B_X)$ for some
$u \in H^1(B_X)$. We need to show that $u \in \HND$.
Since strong-strong continuity implies weak-weak continuity, we have that $E E_{n_j} u_{n_j} \wto E u$ as $j \to \infty$ in $H^1(\R^2)$.
By Mosco convergence of $(H^1_0(U_n^c))_{n \in \N}$, we have that $E u \in H^1_0(U^c)$ hence $u \in \HND$ as required.

\end{proof}

The convergence result for $S_n(k)$ reads as follows.
\begin{prop}\label{prop:mosco-to-conv}
 If $H^1_0(U_n^c) \Mto H^1_0(U^c)$ as $n \to \infty$ and $\lim_{n \to \infty} h_n N_n = 0$, then it holds that
  \begin{equation*}
   \norm{E_n S_n(k)P_n - S(k)}_{H^{-\f12} \to H^1} \to 0 \quad \text{as} \quad n \to \infty, \qquad k \in \C_-.
  \end{equation*}
  Furthermore, for any  $g \in H^{-\f12}(\Gamma)$, it holds that
  \begin{equation}
   \norm{S(k)P_n g}_{H^{1}(\Omega_n)} \leq C_X(k) \norm{g}_{H^{-\f12}(\Gamma)}, \qquad k \in \C_-,
  \end{equation}
  where the constant $C_X(k) >0$ is locally uniform in $\C_-$.
\end{prop}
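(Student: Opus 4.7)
The second claim is immediate: $S(k)P_ng\in\HND$ extends by zero across $\partial U$ to a function in $H^1(B_X)$, whose restriction to $\Om_n\subset B_X$ satisfies $\|S(k)P_ng\|_{H^1(\Om_n)}\le\|S(k)\|_{H^{-\f12}\to H^1(\Om)}\,\|P_ng\|_{H^{-\f12}(\Gamma)}\le C_X(k)\|g\|_{H^{-\f12}(\Gamma)}$, with $C_X(k)$ locally uniform in $k$ by analyticity of $k\mapsto S(k)$ on $\C_-$. For the operator-norm convergence I proceed in three stages.

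\emph{Stage 1 (uniform boundedness).} Arguing by contradiction, suppose $\|E_nS_n(k)P_n\|_{H^{-\f12}\to H^1(B_X)}\to\infty$; then there exist $g_n$ with $\|g_n\|_{H^{-\f12}}=1$ such that $c_n:=\|E_nS_n(k)P_ng_n\|_{H^1}\to\infty$. Normalising $v_n:=E_nS_n(k)P_ng_n/c_n$ and extracting a weak subsequential $H^1$-limit $v$, the Mosco convergence $E_n\VhND\Mto\HND$ of Lemma \ref{lem:fem-mosco} forces $v\in\HND$. Passing to the limit in the Galerkin identity---whose right-hand side now carries a factor $1/c_n\to 0$---shows that $v$ solves the homogeneous mixed Neumann--Dirichlet problem at energy $k^2$; since $\sigma(-\DND)\subset[0,\infty)$ while $k\in\C_-$ gives $k^2\notin[0,\infty)$, we conclude $v=0$. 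The compact embedding $H^1(B_X)\hookrightarrow L^2(B_X)$ then yields $v_n\to 0$ in $L^2$, and testing the Galerkin identity against $v_n$ (handling the boundary term by Lemmas \ref{lem:iota-n}--\ref{lem:interpolation} and the uniform $H^1$-bound on $E_n$ from Lemma \ref{lem:E-n}(b)) forces $\|v_n\|_{H^1(B_X)}\to 0$, contradicting $\|v_n\|_{H^1}=1$.

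\emph{Stage 2 (pointwise convergence).} For fixed $g\in H^{-\f12}(\Gamma)$, Stage 1 gives $H^1$-boundedness of $u_n:=E_nS_n(k)P_ng$, and its weak subsequential limit $u$ lies in $\HND$ by Mosco. Using the strong convergence $P_ng\to g$ in $H^{-\f12}(\Gamma)$, the interpolation estimate of Lemma \ref{lem:interpolation} (which, under $h_nN_n\to 0$, gives $\hat\iota_n\hat\Pi_nP_ng\to g$ in $H^{-\f12}(\Gamma)$), the transport maps $\hat\iota_n$ of Lemma \ref{lem:iota-n}, and the uniform trace bound of Lemma \ref{lem:trace-uniform}, I pass to the limit in the Galerkin equation tested against piecewise-affine interpolants of smooth $\tilde\phi\in C^\infty_c(U^c)$ and identify the limit uniquely as $u=S(k)g$. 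Strong $H^1$-convergence then follows by a coercivity/Dirichlet-form comparison analogous to the one in Stage 1.

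\emph{Stage 3 (operator-norm upgrade).} First, $S(k):H^{-\f12}(\Gamma)\to H^1(\Om)$ is compact: for $g_n\wto g$ in $H^{-\f12}(\Gamma)$, the difference $w_n:=S(k)g_n-S(k)g$ satisfies
\begin{equation*}
\|\nabla w_n\|_{L^2(\Om)}^2=k^2\|w_n\|_{L^2(\Om)}^2+(g_n-g,\gamma_\Gamma w_n)_{H^{-\f12},H^{\f12}},
\end{equation*}
and both right-hand terms vanish via the compact embeddings $H^1(\Om)\hookrightarrow L^2(\Om)$ and $H^{-\f12}(\Gamma)\hookrightarrow H^{-s}(\Gamma)$ (for some $s<\f12$) combined with boundedness of $\gamma_\Gamma w_n$ in $H^{\f12-s}(\Gamma)$, giving $w_n\to 0$ in $H^1(\Om)$. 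Second, a variant of Stage 2 adapted to varying $g_n\wto g$---using that $P_ng_n\wto g$ in $H^{-\f12}(\Gamma)$, since $P_n\to I$ strongly in $H^{\f12}(\Gamma)$ on the dual test functions---shows $E_nS_n(k)P_ng_n\to S(k)g$ in $H^1(B_X)$ for every weakly convergent $g_n\wto g$. Given compactness of the limit $S(k)$ and the uniform bound of Stage 1, this weak-sequence property characterises operator-norm convergence and delivers $\|E_nS_n(k)P_n-S(k)\|_{H^{-\f12}\to H^1}\to 0$. The principal obstacle is Stage 3: because $P_n\not\to I$ in $\cB(H^{-\f12}(\Gamma))$-norm, pointwise convergence alone cannot be promoted to norm convergence, and the compactness of $S(k)$ together with the strengthened weak-sequence version of Stage 2 is the decisive ingredient.
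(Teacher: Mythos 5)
Your Stage~3 hinges on the claim that $S(k):H^{-\f12}(\Gamma)\to H^1(\Omega)$ is compact, and this claim is false. It is contradicted by the paper's own Appendix: by Lemma~\ref{lem:K-prop} one has $\cN^{\f12}\Min(k)\cN^{\f12}=X+K_3(k)$ with $K_3(k)$ compact and $X\neq 0$, so $\Min(k)=\gamma_\Gamma S(k)$ is \emph{not} compact from $H^{-\f12}(\Gamma)$ to $H^{\f12}(\Gamma)$; since $\gamma_\Gamma$ is bounded from $H^1(\Omega)$ to $H^{\f12}(\Gamma)$, compactness of $S(k)$ would force $\Min(k)$ to be compact. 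Concretely, for $g_\alpha:=|\alpha|^{\f12}e_\alpha$ (an orthonormal sequence in $H^{-\f12}(\Gamma)$) the images $S(k)g_\alpha$ are mutually near-orthogonal in $H^1(\Omega)$ with $\|S(k)g_\alpha\|_{H^1}\sim 1$, hence have no convergent subsequence. Your supporting argument also fails on its own terms: the boundary pairing $(g_n-g,\gamma_\Gamma w_n)_{H^{-\f12},H^{\f12}}$ couples a sequence that converges to zero only weakly in $H^{-\f12}$ with a sequence that is merely bounded in $H^{\f12}$; to kill it via a compact embedding $H^{-\f12}\hookrightarrow H^{-s}$ ($s>\f12$) you would need $\gamma_\Gamma w_n$ bounded in $H^{s-\f12}\subset H^{\f12+\eps}$, i.e.\ $w_n$ bounded above $H^1(\Omega)$, which is precisely the kind of elliptic regularity gain that is \emph{not} available here since $\partial U$ may be fractal.

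Since Stage~3 carries the entire burden of upgrading the pointwise (strong-operator) convergence of Stage~2 to norm convergence, your proof does not establish the displayed limit. Note that the paper's argument does not invoke any compactness of $S(k)$: it starts from an \emph{arbitrary} bounded sequence $(g_n)\subset H^{-\f12}(\Gamma)$, obtains the uniform bound directly from the real and imaginary parts of the Galerkin identity (rather than by contradiction as in your Stage~1), identifies the weak limit by Mosco convergence as in your Stage~2, and then upgrades to strong $H^1$-convergence via Rellich's theorem plus a Dirichlet-form identity. So your Stages~1--2 are broadly consonant with the paper's route, while Stage~3 both differs from the paper and is incorrect. As a sanity check worth recording: $E_nS_n(k)P_n$ has rank $\le 2N_n+1$, hence is compact, so it can only converge in operator norm to a compact operator; since $S(k)$ is not compact, the operator-norm limit in the proposition requires some care in interpretation, and what is actually used downstream (in the estimate for $\Esol(k)$ in the proof of Proposition~\ref{prop:Tn}) is the version with $P_n$ on both sides, i.e.\ convergence when restricted to the growing finite-dimensional spaces $P_nH^{-\f12}(\Gamma)$.
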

\begin{proof} Let  $k \in \C_-$, let $(g_n)_{n \in \N} \subset H^{-\f12}(\Om)$ be any bounded sequence. Consider the sequence of solutions $u_n \in \VhND$, $n \in \N$, of the FEM problems
\begin{equation}\label{eq:fem-f-g}
 \inner{\nabla u_n, \nabla \phi_n}_{L^2(\Om_n)} - k^2 \inner{u_n, \phi_n}_{L^2(\Om_n)} = (\hat \Pi_n P_n g_n, \phi_n|_{\Gamma_n })_{L^2(\Gamma_n)}, \qquad \forall \phi_n \in \VhND.
\end{equation}
Observe that we have $u_n = S_n(k) P_n g_n$.

By setting $\phi = u_n $ and considering the real and imaginary parts of \eqref{eq:fem-f-g}, one may see that
\begin{align*}
 \norm{u_n}^2_{H^1(\Om_n)}  \leq C(k) \abs*{(\hat \Pi_n P_n g, u_n |_{\Gamma_n})_{L^2(\Gamma_n)}} & \leq C_X(k) \norm{\hat \Pi_n P_n g_n}_{H^{-\f12}(\Gamma_n)}\norm{u_n}_{H^1(\Om_n)} \\
 & \leq C_X(k) \norm{g_n}_{H^{-\f12}(\Gamma)} \norm{u_n}_{H^1(\Om_n)}
\end{align*}
where in the second inequality we used the uniform boundedness
of trace operator $\gamma_{\Gamma_n}$ (Lemma \ref{lem:trace-uniform}),
as well as Cauchy-Schwartz for the duality pairing,
and in the third inequality we used Lemma \ref{lem:interpolation}.
Consequently, the sequence $(u_n)_{n \in \N}$ is also bounded in $H^1$.
Notice that, since $u_n = S_n(k) P_n g_n$, this already proved the second statement of the proposition.
Furthermore, by uniform boundedness of $E_n$ (Lemma \ref{lem:E-n}),
the sequence $(E_n u_n)_{n \in \N}$ is bounded in $H^1(B_X)$.

By weak compactness, there exists a subsequence $(n_j)_{j \in \N} \subset \N $, such that
\begin{equation*}
E_{n_j} u_{n_j} \wto u \quad \text{as} \quad j \to \infty \quad \text{in} \quad
H^1(B_X) \quad
\text{for some} \quad u \in H^1(B_X).
\end{equation*}
By Lemma \ref{lem:fem-mosco}, it in fact holds that $u \in \HND$.
Fix any $\phi \in \HND$.
By Lemma \ref{lem:fem-mosco}, there exists a sequence $\phi_n \in \VhND$,
$n \in \N$, such that $E_n \phi_n \to \phi$ as $n \to \infty$ in $H^1(B_X)$.
Furthermore, by weak compactness of $H^{\f12}(\Gamma)$, there exists a subsequence $(g_{n_j})_{j \in \N}$ such that
$\hat \Pi_{n_j} P_{n_j} g_{n_j} \wto g$ as $j \to \infty$ in $H^{-\f12}(\Gamma)$ for some $g \in H^{-\f12}(\Gamma)$.
The remainder of the proof consists in computing limits for each of the inner products in the Galerkin equation \eqref{eq:fem-f-g} (for the subsequence $n_j$).
For simplicity, we rename $n_j \mapsto j$.

Focus first on the left most inner product in \eqref{eq:fem-f-g}.
By adding and subtracting the appropriate term
and using the fact that $E_j u_j = u_j$ on $\Om_j$, we have
\begin{align}
 \big|\inner{\nabla u_j , \nabla \phi_j}_{L^2(\Om_j)}  & - \inner{\nabla u, \nabla \phi}_{L^2(\Om)}\big|
 \nonumber\\ & \leq
 \underbrace{\abs*{\inner{\nabla E_j u_j , \nabla (E_j \phi_j - \phi)}_{L^2(\Om_j)}}}_{(T1)}
 + \underbrace{\abs*{\inner{\nabla (E_j u_j - u), \nabla \phi}_{L^2(\Om_j)}}}_{(T2)}
 + \underbrace{\abs*{\inner{\nabla u , \nabla \phi}_{L^2(\Om \bs \Om_j)}}}_{(T3)}.
\end{align}
The terms (T1) and (T3) tend to zero as $j \to \infty$, by strong convergence
and continuity of measure respectively. The term (T2) is further estimated as
\begin{equation}\label{eq:bound-term-T2}
\abs*{\inner{\nabla (E_j u_j - u), \nabla \phi}_{L^2(\Om_j)}} \leq \abs*{\inner{\nabla (E_j u_j - u), \nabla \phi}_{L^2(\Om)}} + \br*{\norm{E_j u_j}_{H^1(\Om)} + \norm{u}_{H^1(\Om)}}\norm{\phi}_{H^1(\Om\bs \Om_j)}
\end{equation}
The first term on the right hand side of \eqref{eq:bound-term-T2} tends to zero by weak convergence whereas the second tends to zero by continuity of measure and the boundedness of $(E_n u_n)$ in $H^1$. Consequently, the term  (T2) also tends to 0 as $j \to \infty$, hence
\begin{equation}\label{eq:first-inner-prod}
 \inner{\nabla u_j , \nabla \phi_j}_{L^2(\Om_j)} \to \inner{\nabla u, \nabla \phi}_{L^2(\Om )}\quad \text{as} \quad j \to \infty.
\end{equation}
We can similarly show that
\begin{equation}\label{eq:second-inner-prod}
\inner{ u_j , \phi_j}_{L^2(\Om_j)} \to \inner{ u, \phi}_{L^2(\Om )}\quad \text{as} \quad j \to \infty.
\end{equation}

Focus on the right hand side of \eqref{eq:fem-f-g}.
Using the properties of $\hat \iota_n$ (Lemma \ref{lem:iota-n}), we estimate as
\begin{align}\label{eq:duality-big-est}
 \Big|(\hat \Pi_j P_j g_j ,\phi_j|_{\Gamma_j})_{L^2(\Gamma_j)}  &- (g_j, \phi|_{\Gamma})_{H^{-\f12},H^{\f12}}\Big| \nonumber\\
 &  \leq  \underbrace{\abs*{(\hat \iota_j \hat \Pi_j P_j g_j , \hat \iota_j \phi_j|_{\Gamma_j} - \phi|_\Gamma)_{L^2(\Gamma)}}}_{(T4)} + \underbrace{\abs*{(\hat \iota_j \hat \Pi_j P_j g_j  - g, \phi|_\Gamma)_{H^{-\f12},H^{\f12}}}}_{(T5)}.
\end{align}
The term (T4) is further estimated as
\begin{align}\label{eq:mosco-T4}
 \big |(\hat \iota_j \hat \Pi_j P_j g_j , \hat \iota_j \phi_j|_{\Gamma_j} &- \phi|_\Gamma)_{L^2(\Gamma)} \big | \nonumber\\
 & \leq \norm{\hat \iota_j \hat \Pi_j P_j g_j}_{H^{-\f12}(\Gamma)}
 \br*{\norm{\hat{\iota}_j \phi_j|_{\Gamma_j} - (E_j \phi_j)|_{\Gamma}}_{H^{\f12}(\Gamma)} + \norm{(E_j \phi_j)|_{\Gamma} - \phi|_\Gamma}_{H^{\f12}(\Gamma)}}.
\end{align}
The right hand side of \eqref{eq:mosco-T4} tends to zero as $j \to \infty$ by Lemma \ref{lem:E-n} (c), the interpolation estimate Lemma \ref{lem:interpolation} and strong convergence of $\phi_j$.
Furthermore, by the strong convergence of $\hat \iota_j \hat \Pi_j P_j$ (Lemma \ref{lem:interpolation}), the sequence
 $\hat \iota_j \hat \Pi_j P_j g_j$ tends weakly to $g$ in $H^{-\f12}(\Gamma)$, showing that (T5) tends to zero.
Consequently, we have
\begin{equation}\label{eq:fourth-inner-prod}
 (\hat \Pi_j P_j g ,\phi_j|_{\Gamma_j})_{L^2(\Gamma_j)}  \to (g, \phi|_{\Gamma})_{H^{-\f12},H^{\f12}} \quad \text{as} \quad j \to \infty.
\end{equation}

Since $\phi \in \HND$ above was arbitrary, the limits \eqref{eq:first-inner-prod}, \eqref{eq:second-inner-prod} and \eqref{eq:fourth-inner-prod} show that $u \in \HND$ satisfies the variational equation
\begin{equation}\label{eq:var-f-g}
 \inner{\nabla u, \nabla \phi}_{L^2(\Om)} - k^2 \inner{u, \phi}_{L^2(\Om)} = (g, \phi|_{\Gamma})_{H^{-\f12},H^{\f12}}, \qquad \forall \phi \in \HND,
\end{equation}
that is $u = S(k) g$.

The above argument may be repeated for any weakly converging subsequence of $(u_n)_{n \in \N}$, hence  we conclude that
$E_n u_n \wto u$ as $n \to \infty$ in $H^1(B_X)$.
By Rellich's theorem, this implies that $E_n u_n \to u$ as $n \to \infty$ in $L^2(B_X)$.
Furthermore, setting $\phi = u_n$ in \eqref{eq:fem-f-g} and taking the limit $n \to \infty$ shows that $\norm{\nabla u_n}_{L^2(\Omega_n)} \to \norm{\nabla u}_{L^2(\Omega)}$, hence we also have strong convergence $u_n \to u$ in $H^1(B_X)$ as $n \to \infty$.
The proposition follows since we have show that
\begin{equation*}
 E_n S_n(k) P_n g  \to S(k)g \quad \text{as} \quad n \to \infty \qquad
 \text{for some} \qquad g \in H^{-\f12}(\Gamma).
\end{equation*}
\end{proof}

\subsection{Proof of Proposition \ref{prop:Tn}}
\label{subsec:proof}

First we focus on proving (a).
It was proven that $\Emat(k) \to 0$ and $\Esum(k) \to 0$ in Lemma \ref{lem:E-mat-conv} and Proposition \ref{prop:E-sum} respectively so, by Lemma \ref{lem:initial}, it suffices to prove that $\Esol(k) \to 0$ as $n \to \infty$ for all $k \in \C_-$.
Focusing on $\Esol(k)$, we estimate as
\begin{align}\label{eq:Esol-split}
 \Esol(k) & \leq \norm{P_n(\Min(k) - \hat{\Pi}_n^*\hMinn(k)P_n)}_{H^{-\f12} \to H^{\f12}} \norm{P_n}_{H^{-\f12} \to H^{-\f12}} \nonumber \\
 & \leq \underbrace{\norm{P_n(\Min(k) - \hat{\iota}_n\hMinn(k)P_n)}_{H^{-\f12} \to H^{\f12}}}_{(T1)} + \underbrace{\norm{P_n (\hat{\iota}_n - \hat{\Pi}_n^*)\hMinn(k)P_n}_{H^{-\f12} \to H^{\f12}}}_{(T2)}.
\end{align}
where we used the fact that $\norm{P_n}_{H^{-\f12} \to H^{-\f12}} = 1$.
Focusing on the term $(T1)$ and using Lemma \ref{lem:E-n} (c), the trace theorem and Proposition \ref{prop:mosco-to-conv}, we have
\begin{align}
 (T1) \leq \norm{\Min(k) - \hat{\iota}_n\hMinn(k)P_n}_{H^{-\f12} \to H^{\f12}}
 & \leq \norm{\gamma_\Gamma(S(k) - E_n S_n(k)P_n)}_{H^{-\f12} \to H^{\f12}} \nonumber \\ & \leq \norm{S(k) - E_n S_n(k)P_n}_{H^{-\f12} \to H^{1}} \to 0 \qquad \text{as} \qquad n \to \infty.
\end{align}
To see that the term $(T2)$ tends to zero, let $\phi, \psi \in H^{-\f12}(\Gamma)$.
Then,
\begin{align}
\Big| \Big( P_n( \hat \Pi_n^* & -  \hat \iota_n ) \hMinn(k)  P_n \phi,  \psi\Big)_{H^{-\f12},H^\f12} \Big|
 = \abs*{\br*{\hMinn(k)P_n \phi, \hat \Pi_n P_n\psi}_{H^{-\f12},H^\f12} - \br*{ \hat \iota_n \hMinn(k)P_n \phi, P_n \psi}_{H^{-\f12},H^\f12}} \nonumber\\
 & \leq \abs*{\br*{\hat \iota_n \hMinn(k)P_n \phi, \hat \iota_n  \hat \Pi_n P_n\psi}_{H^{-\f12},H^\f12} - \br*{\hMinn(k)P_n \phi, \hat \Pi_n P_n\psi}_{H^{-\f12},H^\f12}}
  \nonumber \\
  & \hspace{220pt} + \abs*{\br*{ \hat \iota_n \hMinn(k)P_n \phi,  \hat \iota_n  \hat \Pi_n P_n \psi -  P_n \psi}_{H^{-\f12},H^\f12}} \nonumber \\
  & \leq \delta_X(n) \norm{\hMinn(k)P_n \phi}_{H^{\f12}(\Gamma_n)} \norm{ \hat \Pi_n  P_n \psi }_{H^{-\f12}(\Gamma_n )} + \norm{\hat \iota_n\hMinn(k)P_n \phi}_{H^{\f12}(\Gamma)} \norm{(\hat \iota_n \hat \Pi_n P_n - 1)   P_n \psi}_{H^{-\f12}(\Gamma)} \nonumber \\
  & \leq C_X(k) \br*{\delta_X(n) + h_n^2 N_n^{\f52}} \norm{\phi}_{H^{-\f12}(\Gamma)} \norm{\psi}_{H^{-\f12}(\Gamma)},
\end{align}
where in the third line we used Lemma \ref{lem:iota-n} (b) and in the fourth line we used Lemma \ref{lem:interpolation} (to estimate the $H^{-\f12}$ norms)
as well as Lemma \ref{lem:trace-uniform}, Lemma \ref{lem:E-n} (c) and Proposition \ref{prop:mosco-to-conv} (to estimate  the $H^{\f12}$ norms).
The factor $\delta_X(n) + h_n^2 N_n^{\f52}$  tends to zero as $n \to \infty$ hence  the term $(T2)$ also tends to zero as $n \to \infty$, completing the proof of (a).

Next, we prove (b); it suffices to prove that $\sup_{n \in \N}\norm{\tilde T_n(k)}_{L^2 \to L^2} < \infty$ for every $k \in \C_-$.
Firstly, we have
\begin{align*}
 \norm{\tilde T_n(k)}_{L^2 \to L^2} \leq 1 + \norm{T_n(k)}_{L^2 \to L^2}
 & \leq 1 + \norm{P_n\cN^{\f12} (N_1(k)+ \Min^n(k) N_2(k)) \cN^{-\f12}P_n}_{L^2 \to L^2} \\
 & \leq C_X(k) ( 1 + \norm{P_n \Min^n(k)P_n}_{H^{-\f12} \to H^{\f12}})
\end{align*}
for some locally uniform constant $C_X(k) > 0$,
where the final inequality holds by the locally uniform boundedness of $N_1(k)$ and $N_2(k)$ between their respective spaces.
We therefore focus on proving boundedness of $\norm{P_n \Min^n(k)P_n}_{H^{-\f12} \to H^{\f12}}$.
Adding and subtracting by $\hat \Pi_n^* \hMinn(k)$, we have
\begin{align}
\label{eq:final-ineq}
 \norm{P_n \Min^n(k)P_n}_{H^{-\f12} \to H^{\f12}} & \leq \norm{P_n (\Min^n(k) - \hat \Pi_n^* \hMinn(k))P_n}_{H^{-\f12} \to H^{\f12}}  + \norm{P_n  \hat \Pi_n^* \hMinn(k)P_n}_{H^{-\f12} \to H^{\f12}}\nonumber \\
 & \leq C_{X,k_0}(k) + \norm{P_n  \hat \Pi_n^* \hMinn(k)P_n}_{H^{-\f12} \to H^{\f12}},
\end{align}
where we used  Proposition \ref{prop:E-sum} in the second inequality.
It therefore suffices to prove boundedness of the second term on the right hand side of \eqref{eq:final-ineq}.

Let $\phi, \psi \in H^{-\f12}(\Gamma)$. Then,
\begin{align*}
 \abs*{\br*{P_n  \hat \Pi_n^* \hMinn(k)P_n \phi,\psi}_{H^{-\f12},H^{\f12}}} & = \abs*{\br*{\hMinn(k)P_n \phi, P_n  \hat \Pi_n\psi}_{H^{-\f12},H^{\f12}}} \\
 & \leq \norm{\hMinn(k)P_n \phi}_{H^{\f12}(\Gamma_n)}
 \norm{P_n  \hat \Pi_n\psi}_{H^{-\f12}(\Gamma_n)} \\
 & \leq C_X(k) \norm{\phi}_{H^{-\f12}(\Gamma)} \norm{\psi}_{H^{-\f12}(\Gamma)}
\end{align*}
where the final inequality holds by the uniform trace inequality Lemma \eqref{lem:trace-uniform}, Proposition \ref{prop:mosco-to-conv} and Lemma \ref{lem:interpolation}.
This shows that the second term on the right hand side of \eqref{eq:final-ineq} is bounded independently of $n$ for each $k \in \C_-$, completing the proof.

\section{Proof of Theorem \ref{th:SCI}}\label{sec:SCI-proof}

The purpose of this section is to prove Theorem \ref{th:SCI} by constructing a sequence of arithmetic algorithms $\Gamma_n: \cS \to \cM$, $n \in \N$, satisfying \eqref{eq:SCI-conv}. 
The construction of the algorithm may be summarised by the following diagram. 
\begin{equation}
	U \xmapsto{\mathrm{pixelate}} \Om_n \xmapsto{\mathrm{triangulate}} \cT_n \xmapsto{\mathrm{Section \, \ref{subsec:num-method}}} g_n \xmapsto{\Gamma_n^{(\mathrm{zeros})}} \Gamma_n(U) 
\end{equation}
We shall approximate $U$ by the pixelation procedure presented in Example \ref{ex:pixelation}, then create a mesh of the inner domain and apply the numerical method presented in Section \ref{subsec:num-method} to compute point values of the analytic function $g_n$. In Section \ref{subsec:compute-zeros} below, we shall construct an arithmetic algorithm $\Gamma_n^{(\mathrm{zeros})}$ capable of compute the zeros of $g_n$ with a-priori error control in  Attouch-Wets distance, from which we obtain the output of $\Gamma_n$. 

\subsection{Applying the Levitin-Marletta method}

Fix $U \in \cS$ and $n \in \N$. 
Let $U_n$ be the pixelation approximation of $U$, as defined by \eqref{eq:pixels-defn}. 
Clearly, $U_n$ depends only on $\mathbbm{1}_U(x_i)$ for a finite number of points $x_i$. 
Next, let $\Om_n := B_X^n \backslash U_n$, where $B_X^n$ is defined as a convex, polygonal subset of $B_X(0)$ whose boundary is obtained by joining $n$ equidistant points on $\partial B_X(0)$ with straight lines. Next, since the corners of $\Om_n$ have angles $\tfrac{\pi}{2} \leq \theta \leq \tfrac{3 \pi}{2}$, 
we may apply standard methods to obtain a shape-regular mesh $\cT_n$. 

As before, let $h_n$ be the largest diameter of an element in $\cT_n$. 
Set parameters $N_n$ and $J_n$ such that Assumption \ref{ass:param} holds. 
The numerical method detailed in Section \ref{subsec:num-method} yields an analytic function $g_n$ such that $g_n(k)$ may be computed in a finite number of arithmetic operations. 
By Theorem \ref{th:main}, we have 
\begin{equation}
	\d_{\mathrm{AW}}(\cZ(g_n),\mathrm{Res}(g_n)) \to 0 \qquad \text{as} \qquad n \to \infty,
\end{equation}
where $\cZ(g_n)$ denotes the zeros of $g_n$ in $\C_-$.

In the next section, we shall construct an arithmetic algorithm $\Gamma_n^{\mathrm{zeros}}$ with access to the point values of $g_n$, such that 
\begin{equation}\label{eq:Gam-zeros-prop}
	\d_{\mathrm{AW}}(\cZ(g_n),\Gamma_n^{\mathrm{zeros}}(g_n)) \leq \frac{1}{n}
\end{equation}
for large enough $n$. 
Setting $\Gamma_n(U) = \Gamma_n^{\mathrm{zeros}}(g_n)$ yields an arithmetic algorithm satisfying \eqref{eq:SCI-conv} as required.

\subsection{Computing the zeros of an analytic function with error control}\label{subsec:compute-zeros}

Let $\cB_n$ be a finite collection of closed boxed $B \subset \C_-$ with non-overlapping interiors such that 
\begin{enumerate}
	\item $\bigcup_{n \in \N}\bigcup_{B \in \cB_n} B = \C_-$,
	\item $\diam(B) \leq 2^{-n}$ for all $B \in \cB_n$,
	\item $\hat{B}_n \subseteq \bigcup_{B \in \cB_n} B $, where $\hat B_n := \set{k \in \C_-: 2^{-n} \leq - \im(k) \leq 2^n, \, |\re(k)| \leq 2^n}$.
\end{enumerate} 
Below, we shall construct an arithmetic algorithm $\Gamma_n^{(\mathrm{dec})}$ such that for any box $B$
we have 
\begin{equation}\label{eq:Gam-dec-prop}
	\Gamma_n^{(\mathrm{dec})}(B, g_n) = \begin{cases}
		\texttt{yes} & \text{if } g_n \text{ does not have any zeros in }B \\
		\texttt{no} & \text{if any }k \in \cZ(g_n) \text{ satisfies  }\dist(k, B) > 2^{-n} 
	\end{cases}.
\end{equation}
In turn, we define
\begin{equation}
	\Gamma_n^{(\mathrm{zeros})}(g_n) = \cup \set{B \in \cB_n: \Gamma_n^{(\mathrm{dec})}(B,g_n) = \texttt{yes}}
\end{equation}

\subsubsection{Error bounds}

By the triangle inequality, we have 
\begin{equation}\label{eq:triangle-AW}
	\d_{\mathrm{AW}}(	\Gamma_n^{(\mathrm{zeros})}(g_n), \cZ(g_n)) \leq \d_{\mathrm{AW}}(	\Gamma_n^{(\mathrm{zeros})}(g_n), \hat{\cZ}_n(g_n)) + \d_{\mathrm{AW}}(	\hat{\cZ}_n(g_n), \cZ(g_n)),
\end{equation}
where 
\begin{equation}
	\hat{\cZ}_n(g_n) := \set{k \in \cZ(g_n) : \dist(k, \hat{B}_n) \leq 2^{-n}}.  
\end{equation}
We estimate the second term on the right hand side of \eqref{eq:triangle-AW} as
\begin{equation}
	 \d_{\mathrm{AW}}(	\hat{\cZ}_n(g_n), \cZ(g_n)) \leq \d_{\mathrm{AW}}(\hat{B}_n,\C_-) \leq \frac{1}{2n}
\end{equation}
for large enough $n$. 
Focusing now on the first term on the right hand side of \eqref{eq:triangle-AW}, notice that the property \eqref{eq:Gam-dec-prop} of $\Gamma_n^{(\mathrm{dec})}$ implies that we have the following. 
\begin{itemize}
	\item
	Let $k \in \hat{Z}_n(g_n)$. Then, there exists $\tilde k \in \hat B_n \cap \cZ(g_n)$ with $|\tilde k - k| \leq 2^{-n}$. Since $\tilde k \in B$ for some $B \in \cB_n$, we also have $\tilde k \in \Gamma_n^{(\mathrm{zeros})}(g_n)$. 
	\item Any $\tilde k \in \Gamma_n^{(\mathrm{zeros})}(g_n)$, must lie in a box $B \in \cB_n$ with $\Gamma_n^{(\mathrm{dec})}(B, g_n) = \texttt{yes}$ so there exists a $\tilde k \in \cZ(g_n)$ with $|k - \tilde k| \leq 2^{-n}$.  In particular, we have $\tilde k \in \hat \cZ_n(g_n)$. 
\end{itemize}
These two properties together clearly imply that the first term on the right hand side of \eqref{eq:triangle-AW} is bounded by $\tfrac{1}{2n}$ for large enough $n$, proving the desired property \eqref{eq:Gam-zeros-prop} of $\Gamma_n^{(\mathrm{zeros})}$. 

\subsubsection{Constructing $\Gamma_n^{(\mathrm{dec})}$}

It remains to construct an arithmetic algorithm $\Gamma_n^{(\mathrm{dec})}$ satisfying \eqref{eq:Gam-dec-prop}. 
Our strategy  revolves around applying the argument principle. 

Fix a closed box $B \subset \C_-$. 
Let $(B_j)_{j \in \N}$ be any sequence of closed boxes such that 
\begin{equation}
 \forall j \in \N : \qquad 	B \subset \cdots \subset B_j \subset B_{j+1} \subset \cdots \subset \C_-,
\end{equation}
\begin{equation}
	B = \bigcap_{j = 1}^\infty B_j, 
\end{equation}
and 
\begin{equation}\label{eq:B_j-dist}
	\inf_{k \in \partial B_j} \dist(k, \partial B) \geq \frac{C_B}{j}. 
\end{equation}

Observe that $g_n$ is the determinant of a matrix with elements that are explicitly expressed in terms of rational and Hankel functions. 
By standard bounds for these functions (see \cite[Chapter 10]{nist} for instance), there exists $\cC_n > 0$, which may be computed in a finite number of arithmetic operations, such that 
\begin{equation}
	\forall j \in \N: \qquad \forall k \in B_j: \qquad |g_n(k)| + |g'_n(k)|+ |g''_n(k)| \leq \cC_n. 
\end{equation}

Next, let $\hat g_{n,j}$ and $\hat g^{(d)}_{n,j}$ be piecewise constant approximations of $g_n$ and $g'_n$
on $\partial B_j$ respectively such that 
\begin{equation}
	\forall\, k \in \partial B_j: \quad \exists\, k_0 \in \partial B_j: \qquad g_n(k_0) =  \hat g_{n,j}(k_0) \qquad \text{and} \qquad |k - k_0| \leq 2^{-j}. 
\end{equation}
By Taylor's theorem, we have on $\partial B_j$,
\begin{equation}\label{eq:g-hat-est-1}
	|g'_n - \hat g^{(d)}_{n,j}| \leq 2^{-j}\cC_n 
\end{equation}
and 
\begin{equation}\label{eq:g-hat-est-2}
	\abs*{\frac{1}{g_n} - \frac{1}{\hat g_{n,j}}} \leq  \frac{2^{-j} \cC_n}{(\inf_{k \in \partial B_j}|g_n|)^2} \leq \frac{2^{-j} \cC_n}{(L_{n,j} - 2^{-j} \cC_n)^2}, 
\end{equation}
where 
\begin{equation}
	L_{n,j} := \inf_{k \in \partial B_j} |\hat g_{n,j}|.
\end{equation}
Notice that $L_{n_j}$ is computable in a finite number of arithmetic operations. 

Next, consider the integral 
\begin{equation}
	\cI_{n,j} := \frac{1}{2\pi i}\oint_{\partial B_j} \frac{\hat g_{n,j}^{(d)}}{\hat g_{n,j}}, 
\end{equation}
which may be computed in a finite number of arithmetic operations. 
It follows from \eqref{eq:g-hat-est-1} and \eqref{eq:g-hat-est-2} that 
\begin{equation}
	\abs*{\frac{1}{2\pi i}\oint_{\partial B_j} \frac{g_n'}{g_n}  - \cI_{n,j} } \leq \frac{1}{2 \pi}|\partial B_j| \br*{\frac{2^{-j} \cC_n}{	L_{n,j} - 2^{-j} \cC_n} + \frac{2^{-j} \cC_n^2}{(L_{n,j} - 2^{-j} \cC_n)^2}} =: D(n,j). 
\end{equation}
Notice that $D(n,j)$ is computable in a finite number of arithmetic operations. 

Since $g_n$ is analytic in $\C_-$, and hence may only have a finite number of zeros in any compact region in $\C_-$, $g_n$ does not have any zeros on $\partial B_j$ for large enough $j$. 
Furthermore, if $g_n$ does not have any zeros on $\partial B$, we must have 
\begin{equation}
L_{n,j} \geq \inf_{k \in \partial B_j} |g_n(k)| \geq C
\end{equation}
for large enough $j$, where $C > 0$ is independent of $j$.
In the other case, that $g_n$ does have at least one zero on $\partial B$, there exists $\nu \in \N$ suhc that the order of those zeros are bounded by $\nu$. 
Then, by property \eqref{eq:B_j-dist} of $\partial B_j$, we have 
\begin{equation}
	L_{n,j} \geq \inf_{k \in \partial B_j} |g_n(k)| \geq \frac{C}{j^\nu}
\end{equation}
for large enough $j$, where $C > 0$ is independent of $j$.
Observe that in either case, we have 
\begin{equation}
	D(n,j) \to 0 \qquad \text{as} \qquad j \to \infty. 
\end{equation}

By performing a finite number of arithmetic operations, we may compute $j$ such that 
\begin{equation}
	D(n,j) < \frac{1}{2}
\end{equation} 
as well as
\begin{equation}
	\inf_{k \in \partial B_j} \dist(k, \partial B) < 2^{-n}. 
\end{equation}
Define an arithmetic algorithm by 
\begin{equation}
	\Gamma_n^{(\mathrm{dec})}(B,g_n) = \begin{cases} \texttt{yes} & \text{if} \quad \cI_{n,j} > \frac{1}{2} \\
	\texttt{no} & \text{otherwise}
	\end{cases}.
\end{equation}
By the argument principle, $\Gamma_n^{(\mathrm{dec})}(B,g_n) = \texttt{yes}$ if and only if $g_n$ has a zero in $\mathrm{int}(B_j)$. 
The desired property \eqref{eq:Gam-dec-prop} holds, completing the proof.

\section{Numerical examples}\label{sec:numerics}

In this section we show numerical results from a MATLAB implementation of our algorithm and assess its performance. We begin with a disk shaped obstacle, for which the resonances can be computed explicitly in terms of zeros of Hankel functions. After that we show results for some domains with fractal boundary.

\FloatBarrier
\subsection{Disk obstacle}\label{sec:disk_numerics}

\begin{figure}[htbp]
	\centering
	\includegraphics[width=0.8\textwidth]{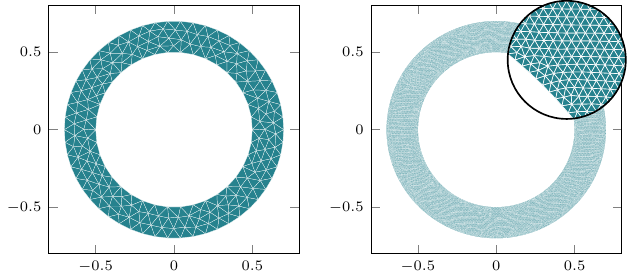}
	\caption{Triangulations of an annulus with an inner radius of 0.5 and outer radius of 0.7 for two different values of the mesh parameter $h$. Left: $h=0.05$, right: $h=0.01$.}
	\label{fig:ball_mesh}
\end{figure}

Consider the obstacle $U = \overline{B_{1/2}(0)}\subset\R^2$, i.e. the disk of radius $\f12$. We chose $X=1$ and used the meshing tool Distmesh \cite{distmesh} to compute triangulations of the annulus $B_X\setminus U$ for the seven values $h\in\{0.08,0.05,0.02,0.01,0.005,0.002,0.001\}$ of the meshing parameter (cf. Figure \ref{fig:ball_mesh} for two examples).

\begin{figure}[htbp]
	\centering
	\includegraphics{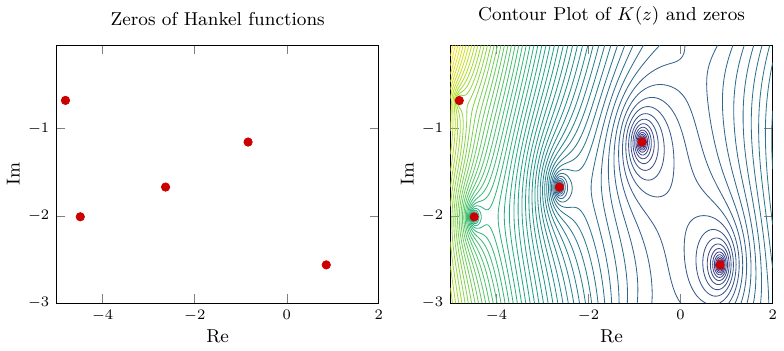}
	\caption{Resonances of $U$ in the complex plane. Left: Computed directly from Hankel functions with high accuracy. Right: Contour plot of $\log|\det(T_n(k))|$, computed as described in Section \ref{}, together with its zeros (red dots). The reference point }
	\label{fig:hankel_resonances}
\end{figure}

Figure \ref{fig:hankel_resonances} (right) shows a contour plot of $T_n(k)$ for $N=6$ and $J=100$. 

\subsubsection{Details of the implementation}\label{sec:optimal_N_heuristic}
Even though the relationship $N h^{\f45}\to 0$ theoretically guarantees convergence, the relative constant between $N$ and $h$ is important and unknown in practice. We therefore use the following heuristic to choose an optimal value of $N$ for any given (finite) $h$:
\begin{enumerate}
	\item Compute the matrix $T_n(k)$ for a large value of $N$,
	\item Consider its diagonal elements (see Figure \ref{fig:K_diag}). By compactness they should tend to 0 at the ends. However, for large $N$ an aliasing-type phenomenon takes over and after reaching a minimum they start growing.
	\item Decrease $N$ until the minimum of $|\diag(T_n(k))|$ is reached at the ends of the matrix.
\end{enumerate}
For the disk obstacle, this process yielded the values in Table \ref{table:optimal_N}.
\begin{table}[h]
	\begin{tabular}{l|lllllll}
		%		\toprule
		$h$ & 0.08 & 0.05 & 0.02 & 0.01 & 0.005 & 0.002 & 0.001\\
		\hline
		$N$ & 6 & 7 & 10 & 13 & 17 & 28 & 39
		%			\bottomrule
	\end{tabular}
	\caption{Optimal values of $N$ for different values of $h$.}
	\label{table:optimal_N}
\end{table}
This relationship is approximately quadratic, i.e. $N\sim h^{-\f12}$, as the right hand plot in Figure \ref{fig:K_diag} shows: plotting $N^2$ against $h^{-1}$ gives an approximately straight line.

\begin{figure}[htbp]
	\centering
	\includegraphics{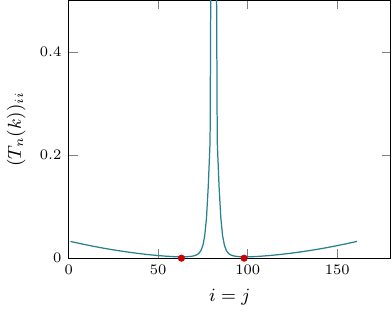}
	\qquad
	\includegraphics{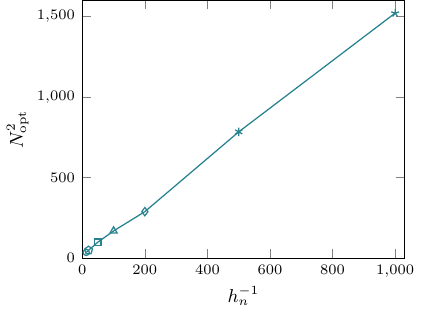}
	\caption{Left: diagonal elements of $|T_n|$ for $h=0.08$, $N=6$. The red dots mark its minima, which we interpret as the optimal size of the matrix $T_n$. Right: square of optimal value of $N$ (determined as in Section \ref{sec:optimal_N_heuristic}), plotted against $h^{-1}$.}
	\label{fig:K_diag}
\end{figure}

\subsubsection{Convergence analysis}\label{sec:convergence_analysis}

In order to test the convergence rate as $h^{-1}$, $N\to\infty$ we chose the resonance near $-0.84 -1.15\i$ (the second from the right in Figure \ref{fig:hankel_resonances}) and increase $h^{-1}$, $N$ according to Table \ref{table:optimal_N}. We shall henceforth refer to the exact value of the resonance as $k_{\text{exact}}$. A zero finding procedure on $H^{(1)}_0$ yields the first 16 digits of $k_{\text{exact}}$ as
\begin{align*}
	k_{\text{exact}} = -0.838549208188362 - 1.154799048234411\i + O(10^{-17}).
\end{align*}
Due to natural limits in memory and computation time, the number $J$ of eigenfunctions in the Aitken's corrector was kept fixed at $J = 100$. In order to ensure the Aitken's error remains negligible nevertheless, we adhered to the following process:
\begin{enumerate}
	\item Choose a reasonable value for $k_0$ by inspecting Figure \ref{fig:hankel_resonances}, say $k_0 = -1-1\i$. Compute a first approximation $\gamma$ of $k_{\text{exact}}$ by minimising $\left|\det(T_n(k))\right|$ (we performed gradient descent until $\left|\det(T_n(k))\right|<10^{-16}$).
	\item Set $k_0 := \gamma$ and recompute the approximation. Call it $\gamma_h$.
	\item Set $k_0 := \gamma_h$, increase $h^{-1}$, $N$ and recompute the approximation $\gamma_h$.
	\item Set $k_0 := \gamma_h$ and proceed in this fashion.
\end{enumerate}
This process ensures that $|k_0 - k|$ remains small for any $k$ that is used in the computation. As a consequence, the Atkinson error remains negligible even for modest values of $J$.

\begin{figure}[htbp]
	\centering
	\includegraphics{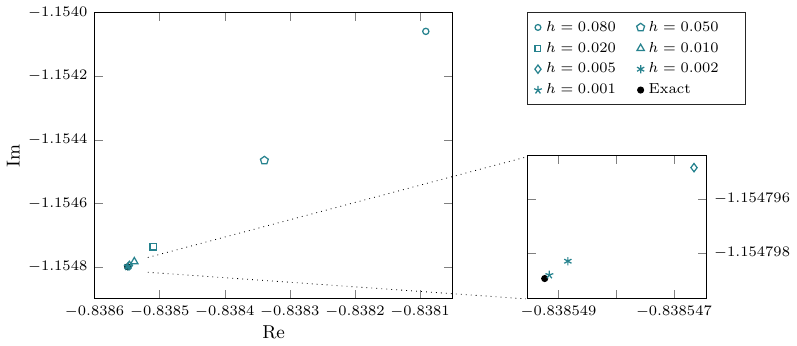}
	\caption{Convergence of successive approximations of $k_{\text{exact}}$ in the complex plane.}
	\label{fig:conv_in_plane}
\end{figure}

The results of this approximation procedure are shown in Figures \ref{fig:conv_in_plane} and \ref{fig:error_plot_ball}. As the plots suggest, the approximation error converges to 0 as $h\to 0$. The slope of the line in Figure \ref{fig:error_plot_ball} suggests a convergence rate of $|k_{\text{exact}} - \gamma_h| \sim h^{2}$, in accordance with the FEM error of a domain with smooth boundary.

\begin{figure}[htbp]
	\centering
	\includegraphics{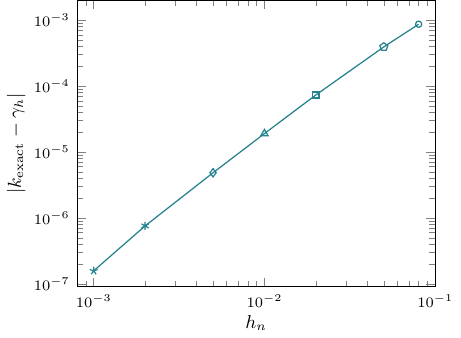}
	\caption{Approximation error $|k_{\text{exact}} - \gamma_h|$ for $h$ in Table \ref{table:optimal_N}.}
	\label{fig:error_plot_ball}
\end{figure}

\FloatBarrier
\subsection{A filled Julia set}\label{sec:julia_numerics}

Next we demonstrate the algorithm's capabilities on domains with fractal boundary. We compute the resonances on a sequence of Julia sets depending on a parameter $q\in[0,0.733]$, which morph from a disk for $q=0$ into an  irregular set (cf. Figure \ref{fig:julia_domains}). 

\begin{figure}[htbp]
	\centering
	\includegraphics[width=0.9\textwidth]{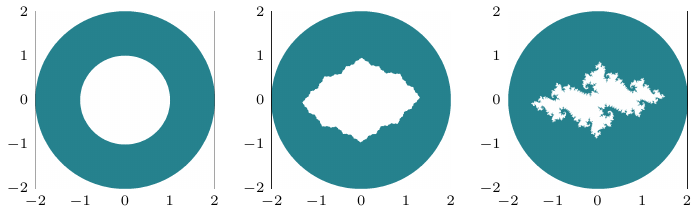}
	\caption{Examples of the Julia sets used in the computation. Left: $q=0$, centre: $q=0.4$, right: $q=0.733$.}
	\label{fig:julia_domains}
\end{figure}

For any complex number $c$ the filled Julia set $\mathcal J_c$ is defined by 
\begin{align}\label{eq:julia_set}
	\mathcal J_c = \{z\in\C\,|\,f^{ \circ k}(z) \text{ bounded as }k\to\infty\},
\end{align}
where $f(z) = z^2+c$ and $f^{ \circ k}$ denote the $k$th iterate of $f$. It can be shown \cite{falconer} that $J_c$ has an interior if and only if $c$ is in the Mandelbrot set. For our numerical experiment we choose
$
c = q(-1+0.2\i), 
$
where $q$ varies from 0 to $0.7$ in steps of $0.05$. If $q=0.75$, then $c$ is outside the Mandlbrot set and $J_c$ fails to satisfy Assumption \ref{ass:U}. In order to capture the behaviour of the resonances at the boundary we added the values $q\in\{0.71, 0.72, 0.73, 0.733\}$ yielding 19 resonance computations in total.

\begin{figure}
	\centering
	\includegraphics[width=0.8\textwidth]{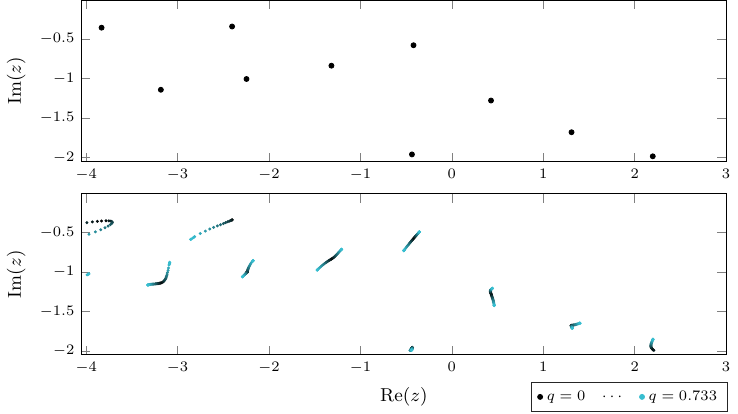}
	\caption{Movement of resonances in the complex plane as $q$ varies from 0 to $0.733$. Top: computed resonances for $q=0$ (the disk obstacle). Bottom: paths traced out by the resonances. Cyan intensity corresponds to larger $q$.}
	\label{fig:julia_seq_in_plane}
\end{figure}

\begin{figure}
	\centering
	\includegraphics[width=0.6\textwidth]{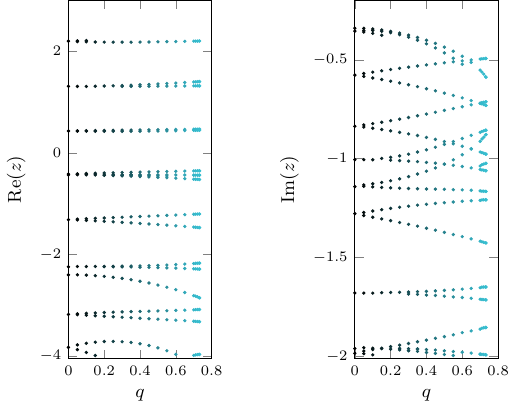}
	\caption{Real and imaginary parts of the computed resonances as a function of $q$. Left: real part, right: imaginary part.}
	\label{fig:julia_res_against_q}
\end{figure}

\begin{remark}[Mesh generation]
	The mesh generation for the filled Julia set was done with a combination of Distmesh (for the outer part) and a pixelation method similar to \cite{spectralpaper}. Pixels were added to the mesh if their midpoint was determined to lie outside the filled Julia set, as determined numerically by truncating the iteration in \eqref{eq:julia_set}.
	The pixel size in Figure \ref{fig:julia_domains} corresponds to the pixel size in our meshing.
\end{remark}

Figures \ref{fig:julia_seq_in_plane} and \ref{fig:julia_res_against_q} show the results of our algorithm for this sequence of sets. For $q=0$ the computation yields the familiar resonances of a disk obstacle (Figure \ref{fig:julia_seq_in_plane} (top) is a scaled version of Figure \ref{fig:hankel_resonances}).
As $q$ increases, the resonances begin to split and drift apart. This is in accordance with the geometry of the associated Julia sets: The disk-shaped domain for $q=0$ is rotationally symmetric. This symmetry is broken more and more as $q$ increases, as Figure \ref{fig:julia_domains} illustrates. As a result the resonances become less and less degenerated.
An animation of the full sequence is available at \href{https://frank-roesler.github.io/images/research/rough_reson_anim.gif}{frank-roesler.github.io/images/research/rough\_reson\_anim.gif}.

%\FloatBarrier
\subsection{The Koch Snowflake}\label{sec:koch_numerics}

Finally, we consider the Koch Snowflake, which also satisfies Assumption \ref{ass:U}. A natural sequence $U_n$ with the appropriate convergence properties is given by the Koch prefractals, which can be easily computed and triangulated. In this section we use our algorithm to explore how resonances change as the prefractals approximate the Koch Snowflake.

\begin{figure}[htbp]
	\centering
	\includegraphics[width=0.8\textwidth]{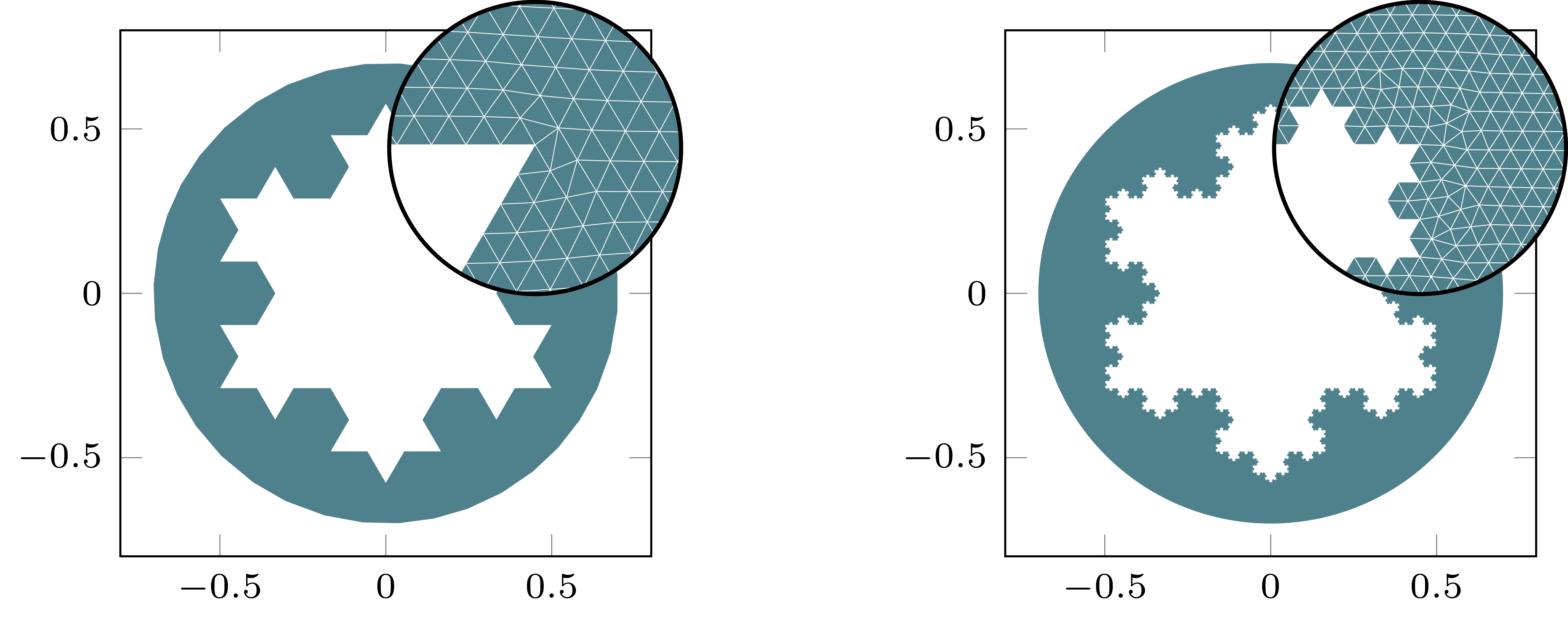}
	\caption{Meshes for both the 2nd (left) and the 5th (right) iteration of the Koch Snowflake used in our computation.}
	\label{fig:koch_meshes}
\end{figure}

We computed triangulated domains for the Koch iterations 2, 3, 4 and 5, cf. Figure \ref{fig:koch_meshes} for illustrations of the 2nd and 5th iterations. Figure \ref{fig:koch_contour_plots} shows example visualisations of the results for the third Koch iteration for two regions in the complex plane (near $-1$ and $-14$,  respectively). The algorithm yields three resonances near $-14$ and one resonance near $-1$.

\begin{figure}[htbp]
	\centering
	\includegraphics[width=0.95\textwidth]{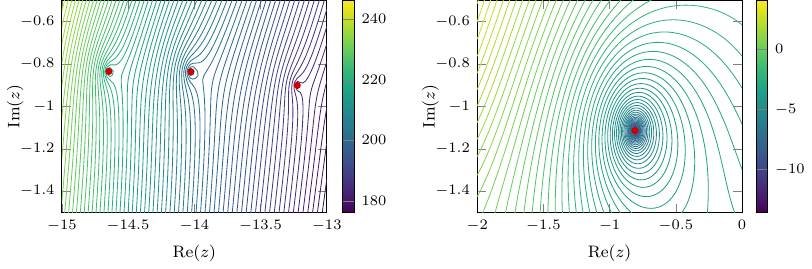}
	\caption{Resonances and contour plots of $\log|\det(T_n(k))|$ for the third iteration of the Koch Snowflake near $k_0=-1-1i$ and $k_0=-14-1i$. }
	\label{fig:koch_contour_plots}
\end{figure}

As the Koch iteration increases and the boundary of the prefractals becomes more and more irregular as they approximate the Koch curve. In this process, more and more small cavities open up in the boundary and one would expect waves of appropriate wave lengths to become increasingly trapped. As a consequence, we would expect the higher resonances (whose wavelength fits the cavities) to depend more strongly on the Koch iteration than the lower ones (whose wave length corresponds to the large scale structure of the domain).

This intuitive understanding is supported by our numerical results, as is shown in Figure \ref{fig:koch_contour_plots}. As the Koch iteration increases from 2 to 5, the three resonances near $-14$ move to the right in steps of order $10^{-1}$. The resonances near $-1$, on the other hand, move by an order of magnitude less, with steps of order $10^{-2}$.

\begin{figure}[htbp]
	\centering
	\includegraphics{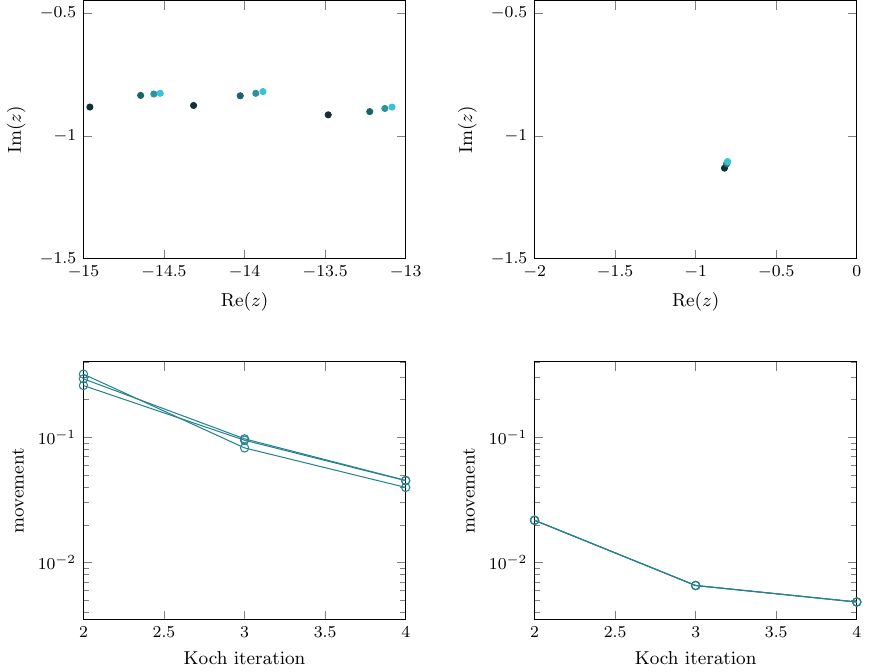}
	\caption{Movement of resonances as the Koch iteration increases. Brighter cyan colours correspond to larger Koch iteration.}
	\label{fig:koch_contour_plots}
\end{figure}

\appendix
\section{Scattering resonances with NtD operators}
\label{app:decomp}

Recall the definition of the solution operator $S(k)$ \eqref{eq:weak-S}. 
Consider the annulus
\begin{equation*}
 A_X  = B_{X}(0) \bs \overline{B_{X - 1}(0)}.
\end{equation*}
Denote the inner boundary of $A_X$ by
\begin{equation*}
 \Gamma_{X-1} := \partial B_{X-1}(0).
\end{equation*}
Let
\begin{equation*}
 S^A \in \cB(H^{-\f12}(\Gamma);H^1(A_X))
\end{equation*}
denote the solution operator for the BVP
\begin{equation}\label{eq:BVPs-SA}
   \begin{cases}
    - \Delta u = 0 \quad \text{on} \quad   A_X \quad  \\
    \del_\nu|_\Gamma u = g, \quad \del_\nu|_{\Gamma_{X-1}} u = - \frac{X}{X-1} g
  \end{cases}.
\end{equation}
Note that the above boundary value problems are well-posed since the compatibility condition
\begin{equation}
  \label{eq:comp-cond}
  \int_{\del A} \del_\nu |_{\del A} u = 0.
\end{equation}

Next, let $R(k):= (-\DND -k^2)^{-1}$, $k \in \C_+$, denote the resolvent operator for the Laplacian $-\DND$ on $L^2(\Om)$ endowed Dirichlet boundary conditions on $\partial U$ and Neumann boundary conditions on $\Gamma$. 
Finally, we introduce a smooth cutoff function $\chi \in C^\infty(A_X)$ such that
\begin{equation*}
 \chi =
 \begin{cases}
 1 \quad \text{near} \quad \Gamma \\
 0 \quad \text{near} \quad \Gamma_{X-1}
 \end{cases}.
\end{equation*}
\begin{lemma}
  \label{lem:S-decomp}
  The operator $ S(k)$ admits the decomposition
  \begin{equation}
    \label{eq:S-decomp}
    S(k)  = \chi S^A + R(k)D_\chi(k) S^A, \qquad k \in \C_+,
  \end{equation}
  where, $D_\chi(k) \in \cB(H^1(A_X);L^2(\Om))$ is a first order differential operator defined by
  \begin{equation}
    \label{eq:D-chi-defn}
    D_\chi(k)  =  k^2 \chi + \Delta(\chi) + 2 \nabla(\chi) \cdot \nabla, \qquad k \in \C_-.
  \end{equation}
\end{lemma}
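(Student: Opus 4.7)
The plan is to verify the identity pointwise in $g$: fix $g \in H^{-\f12}(\Gamma)$ and set $u := S(k)g \in \HND$ and $v := S^A g \in H^1(A_X)$. The claim is that $w := u - \chi v \in \HND$ equals $R(k) D_\chi(k) v$. Since $\chi$ vanishes in a neighbourhood of $\Gamma_{X-1}$, the product $\chi v$ (and similarly $D_\chi(k) v$) extends by zero from $A_X$ to a well-defined function supported away from $\Gamma_{X-1}$. Because $\partial U \subset B_{X-1}(0)$, this extension vanishes on a neighbourhood of $\partial U$, so $\chi v \in \HND$ and $D_\chi(k) v \in L^2(\Omega)$ in an unambiguous way.

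The main computation will be the weak identity
\begin{equation*}
(-\Delta - k^2)(\chi v) = -\chi \Delta v - 2 \nabla \chi \cdot \nabla v - (\Delta \chi) v - k^2 \chi v = - D_\chi(k) v,
\end{equation*}
which uses only the product rule and the harmonicity $\Delta v = 0$ on $A_X$. Combined with $(-\Delta - k^2) u = 0$ on $\Omega$, this gives $(-\Delta - k^2) w = D_\chi(k) v$ as distributions on $\Omega$. I would carry out this step in the weak formulation: for any test function $\phi \in \HND$, apply the definitions \eqref{eq:weak-S} of $S(k)$ and the analogous variational problem for $S^A$, together with the integration-by-parts formula for $\chi v$, and collect terms.

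Next I would check boundary data for $w$. On $\partial U$, both $u$ and $\chi v$ have vanishing Dirichlet trace (the latter because $\chi v$ is supported in $A_X$, which is disjoint from $\partial U$), so $w|_{\partial U} = 0$. On $\Gamma$, we have $\chi \equiv 1$ in a neighbourhood, hence $\partial_\nu|_\Gamma(\chi v) = \partial_\nu|_\Gamma v = g$ by the boundary conditions in \eqref{eq:BVPs-SA}, while $\partial_\nu|_\Gamma u = g$ by the definition of $S(k)$. Therefore $\partial_\nu|_\Gamma w = 0$. Hence $w$ lies in $\operatorname{dom}(-\DND)$ and satisfies $(-\DND - k^2) w = D_\chi(k) v$, which by the definition of $R(k)$ yields $w = R(k) D_\chi(k) v$, completing the decomposition.

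The main point of care will be the weak interpretation of the compatibility between $\chi v$ (defined on $A_X$ and extended by zero) and the spaces $\HND$, $\operatorname{dom}(-\DND)$, together with justifying the boundary identity $\partial_\nu|_\Gamma v = g$ from the variational problem for $S^A$; this is where the compatibility condition \eqref{eq:comp-cond} enters to ensure $v$ is well-defined. The algebraic identity for $D_\chi(k)$ itself is a one-line product rule, so the entire argument reduces to bookkeeping once the extension-by-zero conventions are fixed.
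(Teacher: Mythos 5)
Your proposal is correct and follows essentially the same route as the paper: set $w = u - \chi v$, use the product rule together with harmonicity of $v$ to get $(-\Delta - k^2)w = D_\chi(k)v$, and identify $w$ with $R(k)D_\chi(k)v$ via membership in $\HND$ (extension by zero). Your explicit verification of the vanishing Neumann data on $\Gamma$ (using $\chi \equiv 1$ near $\Gamma$ and the matching boundary data of $u$ and $v$) is a point the paper leaves implicit, and is a welcome addition rather than a deviation.
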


\begin{proof}
  For arbitrary $g \in H^{-1/2}(\Gamma)$, let $u = S(k)g$ and $v = S^A g$. Consider the function $w := u - \chi v$.
  We aim to show that $w = R(k)D_\chi(k) v$.
  Keeping in mind that $u$ and $v$ are smooth functions by interior elliptic regularity, we compute,
  \begin{align*}
    - \Delta w & = - \Delta u + \Delta(\chi)v + 2 \nabla (\chi) \cdot \nabla(v) + \chi \Delta(v) \\
               & = k^2(u - \chi v ) + k^2 \chi v   + \Delta(\chi) v + 2 \nabla(\chi) \cdot \nabla(v) \\
               & = k^2 w + D_\chi(k) v .
  \end{align*}
  Since $u \in \HND$ and $v \in H^1(A_X)$, an extension by zero shows that $w = u - \chi v \in \HND$, completing the proof.
\end{proof}

\begin{lemma}
\label{lem:SA-decomp}
  There exist a compact operator $K^A$ such that
  \begin{equation*}
    \cN^{1/2}\gamma_\Gamma S^A \cN^{1/2}  = X + K^A.
  \end{equation*}
  Furthermore, for any $s \in \R$, the operators $\cN^sK^A$ and $K^A\cN^s$ are also compact and $S^A:H^{\f12}(\Gamma) \to H^1(A_X)$ is bounded.
\end{lemma}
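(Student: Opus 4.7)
My approach is to exploit the rotational symmetry of the annulus $A_X$ and solve \eqref{eq:BVPs-SA} by separation of variables. For $g = \sum_{\alpha \in \Z} g_\alpha e_\alpha \in H^{-\f12}(\Gamma)$, I would seek $u = S^A g$ in the form $u(r,\theta) = \sum_\alpha u_\alpha(r) \frac{e^{\i \alpha \theta}}{\sqrt{2\pi X}}$, so that each radial coefficient solves the Euler-type ODE $r^2 u_\alpha'' + r u_\alpha' - \alpha^2 u_\alpha = 0$ on $(X-1,X)$. For $\alpha \neq 0$ this gives $u_\alpha(r) = a_\alpha r^{|\alpha|} + b_\alpha r^{-|\alpha|}$, while for $\alpha = 0$ one has $u_0(r) = a_0 + b_0 \log r$. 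Translating the Neumann data into the modal conditions $u_\alpha'(X) = g_\alpha$ and $u_\alpha'(X-1) = \f{X}{X-1} g_\alpha$ and solving the resulting $2 \times 2$ linear system, routine algebra yields
\[
u_\alpha(X) = \f{X}{|\alpha|} \cdot \f{X^{|\alpha|} - (X-1)^{|\alpha|}}{X^{|\alpha|} + (X-1)^{|\alpha|}} \, g_\alpha, \qquad \alpha \neq 0,
\]
with a bounded $\alpha = 0$ contribution fixed by a convenient normalization (e.g.\ $a_0 = 0$, giving $u_0(X) = X g_0 \log X$).

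It follows that $\gamma_\Gamma S^A$ is diagonal in the orthonormal basis $\{e_\alpha\}_{\alpha \in \Z}$. Conjugating by $\cN^{\f12}$ then gives
\[
\cN^{\f12}\, \gamma_\Gamma S^A\, \cN^{\f12} e_\alpha = X \cdot \f{X^{|\alpha|} - (X-1)^{|\alpha|}}{X^{|\alpha|} + (X-1)^{|\alpha|}} e_\alpha = \br*{X - \f{2 X (X-1)^{|\alpha|}}{X^{|\alpha|} + (X-1)^{|\alpha|}}} e_\alpha
\]
for every $\alpha \neq 0$. Defining $K^A := \cN^{\f12}\gamma_\Gamma S^A \cN^{\f12} - X I$, the operator $K^A$ is diagonal and its eigenvalues decay like $(1 - 1/X)^{|\alpha|}$, i.e.\ exponentially in $|\alpha|$ (modulo a finite-rank correction at $\alpha = 0$). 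Exponential decay of the diagonal entries immediately implies compactness, in fact trace class. Since $\cN^s$ is the diagonal multiplier $\max\{|\alpha|,1\}^s$, whose growth is only polynomial, both $\cN^s K^A$ and $K^A \cN^s$ retain exponentially decaying diagonal entries and are therefore compact for every $s \in \R$.

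Finally, the boundedness of $S^A : H^{\f12}(\Gamma) \to H^1(A_X)$ reduces to the classical theory for the Neumann problem: once the compatibility condition \eqref{eq:comp-cond} is verified from the coupling between the two boundary data, the Lax--Milgram theorem applied on the quotient $H^1(A_X)/\C$ gives the bound $\norm{u}_{H^1(A_X)} \leq C \norm{g}_{H^{-\f12}(\Gamma)}$, and restricting to the continuously embedded subspace $H^{\f12}(\Gamma) \hookrightarrow H^{-\f12}(\Gamma)$ yields the claim. The main obstacle is really just bookkeeping: one must carefully track the normalization of the basis functions $e_\alpha$, the orientation of the outward normal on $\Gamma_{X-1}$, and the additive constant arising at the zero mode from the non-uniqueness of the Neumann problem. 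Once the explicit eigenvalues of $\gamma_\Gamma S^A$ are in hand, the compactness assertions for $K^A$ and its weighted variants follow transparently from the exponential decay.
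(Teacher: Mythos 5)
Your proposal is correct and follows essentially the same route as the paper: separation of variables in polar coordinates, an explicit $2\times 2$ linear system for the Euler-equation coefficients, identification of the diagonal form of $\gamma_\Gamma S^A$, and exponential decay of the correction. Two small remarks are worth making. First, you obtain
$\gamma_\Gamma S^A e_\alpha = \tfrac{X}{|\alpha|}\tfrac{X^{|\alpha|}-(X-1)^{|\alpha|}}{X^{|\alpha|}+(X-1)^{|\alpha|}} e_\alpha$,
whereas the paper records the reciprocal fraction $\tfrac{X}{\alpha}\tfrac{X^{\alpha}+(X-1)^{\alpha}}{X^{\alpha}-(X-1)^{\alpha}}$. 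The discrepancy traces back to the sign of $\partial_r u$ on the inner circle: with the outward normal on $\Gamma_{X-1}$ equal to $-\hat r$ (which is the convention forced by the compatibility condition \eqref{eq:comp-cond} that the paper itself invokes), one gets your $u_\alpha'(X-1)=\tfrac{X}{X-1}g_\alpha$ and hence your fraction. The paper's $c_1,c_2$ in \eqref{eq:SA-sol-2} in fact correspond to $\rho'(X-1)=-\tfrac{X}{X-1}$; one can check that both choices give $X+O((1-1/X)^{|\alpha|})$ after conjugation by $\cN^{1/2}$, so the lemma is unharmed, but your sign appears to be the internally consistent one. Second, for the $H^1$-boundedness of $S^A$ you use Lax--Milgram on the quotient $H^1(A_X)/\C$ plus the embedding $H^{1/2}\hookrightarrow H^{-1/2}$, while the paper performs a direct modal computation of $\|u_\alpha\|_{L^2}$ and $\|\nabla u_\alpha\|_{L^2}$. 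Your abstract argument is cleaner (and generalizes), but note that the quotient bound only controls $\inf_c\|u+c\|_{H^1}$; you should add a word that your chosen gauge ($a_0=0$, equivalently the radial logarithmic representative in the $\alpha=0$ mode) is controlled, e.g.\ via $\|u_0\|_{H^1(A_X)}\lesssim |g_0|$ which follows directly from the explicit formula, so that the estimate transfers to the concrete representative defining $S^A$.
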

\begin{proof}
	The result follows from the matrix representation of $S^A$ in the basis $\{e_\alpha\}_{\alpha\in\Z}$, which we will derive by explicit calculation. For $\alpha\in\Z$ let $u = S^A e_\alpha$. A separation ansatz in polar coordinates $u(r,\theta) = \rho(r)\phi(\theta)$ gives the general solution\footnote{We focus on the case $\alpha\neq 0$, which is sufficient for proving compactness.}
	\begin{align}\label{eq:SA-sol-1}
		\rho(r) = c_1 r^\alpha + c_2 r^{-\alpha}
	  \qquad \text{and} \qquad
		\phi(\theta) = e_\alpha(\theta),
	\end{align}
	with $c_1$, $c_2$ to be determined. Imposing the boundary conditions $\del_\nu|_\Gamma u = e_\alpha$, $\del_\nu|_{\Gamma'} u = - \frac{X}{X-1} e_\alpha$ and a direct calculation yields
	\begin{align}\label{eq:SA-sol-2}
		c_1 = \f{X}{\alpha} \f{1}{X^\alpha - (X-1)^\alpha}
      \qquad \text{and} \qquad
		c_2 = \f{X}{\alpha} \f{1}{(X-1)^{-\alpha} - X^{-\alpha}}
	\end{align}
	and thus
	\begin{align}\label{eq:radial_part_of_S^A}
		\rho(r) = \f{X}{\alpha} \left( \f{r^\alpha}{X^\alpha - (X-1)^\alpha} + \f{r^{-\alpha}}{(X-1)^{-\alpha} - X^{-\alpha}} \right)
	\end{align}
	From \eqref{eq:radial_part_of_S^A} we immediately conclude that
	\begin{align*}
		\gamma_\Gamma S^A(e_\alpha)
		= \f{X}{\alpha} \left(
			\f{X^\alpha + (X-1)^\alpha}{X^\alpha - (X-1)^\alpha}
		\right) e_\alpha(\theta).
	\end{align*}
	Hence $\gamma_\Gamma S^A$ is diagonal in the basis $\{e_\alpha\}_{\alpha\in\Z}$. We conclude that for any $\alpha\neq 0$
	\begin{align}\label{eq:NS^AN_decay}
		\bigl( \cN^{\f12}\gamma_\Gamma S^A\cN^{\f12} \bigr)_{\alpha\alpha}
		&= X\f{|\alpha|}{\alpha} \f{X^\alpha + (X-1)^\alpha}{X^\alpha - (X-1)^\alpha}
		\\
		&= \begin{cases}
			X + 2X\left( \bigl(\f{X}{X-1}\bigr)^\alpha - 1 \right)^{-1} & \text{for }\alpha>0
			\\[2mm]
			X - 2X\left( 1 - \left(\f{X-1}{X}\right)^\alpha \right)^{-1} & \text{for }\alpha<0
		\end{cases}
	\end{align}
	Since the terms $((X/(X-1))^\alpha - 1)^{-1}$ and $((X-1)/X)^\alpha -1)^{-1}$ decay exponentially as $\alpha\to +\infty, -\infty$, respectively, eq. \eqref{eq:NS^AN_decay} immediately implies the assertion.
	
	To prove boundedness of $S^A:H^{\f12}(\Gamma) \to H^1(A_X)$, consider $u_\alpha := S^A(\alpha^{-\f12} e_\alpha)$. The above calculation yields
	\begin{align*}
		u_\alpha(r,\theta) = \alpha^{-\f12}\rho(r)e_\alpha(\theta).
	\end{align*}
	A lengthy calculation yields explicit formulas for $\|u_\alpha\|_{L^2}$, $\|u_\alpha'\|_{L^2}$, which imply
	\begin{align}
		\|u_\alpha\|_{L^2} &\sim \alpha^{-1} \quad\text{ as }\alpha\to\pm\infty
		\\
		\|u_\alpha'\|_{L^2} &\sim 1 \quad\text{ as }\alpha\to\pm\infty,
	\end{align}
	which immediately implies the desired $H^1$-boundedness.
\end{proof}

\begin{lemma}
\label{lem:K-prop}
 There exists an analytic family of compact operators $K(k)$, $k \in \C_-$,
 on $L^2(\Gamma)$ such that
 \begin{equation*}
  \frac{1}{2}\cN^{\f12} (N_1(k)+ \Min(k) N_2(k)) \cN^{-\f12} = I + K(k), \qquad k \in \C_-.
 \end{equation*}
Furthermore, $K(k)$ is bounded on $L^2(\Gamma)$ locally uniformly for $k \in \C_-$.
\end{lemma}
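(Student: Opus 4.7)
My plan is to decompose $T(k)$ into a diagonal ``principal'' part plus a compact remainder by combining the annulus-model splitting of $\Min(k)$ from Lemma \ref{lem:S-decomp} with the asymptotic identity from Lemma \ref{lem:SA-decomp}, and then verify via large-order Hankel asymptotics that the principal part differs from the identity by a compact operator. First, applying Lemma \ref{lem:S-decomp} and using that $\chi \equiv 1$ near $\Gamma$ so $\gamma_\Gamma \chi S^A = \gamma_\Gamma S^A$, I would write
\begin{equation*}
\Min(k) \;=\; \gamma_\Gamma S(k) \;=\; \gamma_\Gamma S^A + M_c(k), \qquad M_c(k) := \gamma_\Gamma R(k) D_\chi(k) S^A.
\end{equation*}
Since $-\DND$ is self-adjoint with spectrum in $[0,\infty)$, the resolvent $R(k)=(-\DND - k^2)^{-1}$ is analytic on $\C_-$ with locally uniform operator bounds. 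Because the support of $D_\chi(k) S^A g$ is a compact subset of the interior of $\Om$ (away from $\partial U$), interior elliptic regularity yields $R(k) D_\chi(k) S^A g \in H^2$ in a neighbourhood of $\Gamma$, and hence $M_c(k) : H^{-\f12}(\Gamma) \to H^{\f32}(\Gamma)$ boundedly with locally uniform $k$-dependence. Bordering with $\cN^{\f12}$ on both sides then gives an operator $L^2(\Gamma) \to H^1(\Gamma)$, which is compact on $L^2(\Gamma)$ by Rellich-Kondrachov.

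Next, since $\cN$, $N_1(k)$ and $N_2(k)$ are all diagonal in $\set{e_\alpha}$ and hence commute, I would substitute the decomposition above into the definition of $T(k)$ to obtain
\begin{equation*}
T(k) \;=\; T_0(k) + \f12\bigl(K^A + \cN^{\f12} M_c(k) \cN^{\f12}\bigr)\,\cN^{-1} N_2(k), \qquad T_0(k) := \f12\bigl(N_1(k) + X \cN^{-1} N_2(k)\bigr),
\end{equation*}
where I used $\cN^{\f12} \gamma_\Gamma S^A \cN^{\f12} = X + K^A$ from Lemma \ref{lem:SA-decomp}. The factor $\cN^{-1} N_2(k)$ is a bounded diagonal operator on $L^2(\Gamma)$ (with locally uniform bounds in $k$) thanks to the Hankel-derivative asymptotic $(H^{(1)}_\nu)'(kX)/A_\nu = O(\nu)$, so the last term is compact and analytic in $k \in \C_-$.

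Finally, I would show that $T_0(k) - I$ is compact on $L^2(\Gamma)$. Since $T_0(k)$ is diagonal, this reduces to verifying that its diagonal entries tend to $1$ as $|\alpha| \to \infty$, which follows by combining the leading Debye asymptotics $H^{(1)}_\nu(kX)/A_\nu \to 1$ and $(H^{(1)}_\nu)'(kX)/A_\nu \sim -\nu/(kX)$ with the sign and chain-rule conventions built into the definitions of $N_1(k)$ and $N_2(k)$ and the derivation of the resonance equation \eqref{eq:H12-op-resonance-cond}. The convergence has rate $O(\nu^{-1})$, so $T_0(k) - I$ is in fact Hilbert-Schmidt. Analyticity and local uniform boundedness of $K(k) := T(k) - I$ in $\C_-$ then follow term by term from the analyticity and boundedness of $\Min(k)$, $R(k)$ and the Hankel-function coefficients. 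The main obstacle I foresee is precisely this last step: tracking how the normalisation $A_\nu$, the chain rule relating $\partial_r$ to differentiation in the argument of $H^{(1)}_\nu$, and the outward-normal conventions in \eqref{eq:BVP-in}--\eqref{eq:BVP-out-Min} conspire so that the principal symbol of $T_0(k)$ evaluates to exactly $1$ rather than to some nontrivial $k$-dependent scalar.
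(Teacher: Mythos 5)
Your route is essentially the paper's: split $\Min(k)$ via Lemmas \ref{lem:S-decomp} and \ref{lem:SA-decomp} into the explicit annulus NtD plus a smoothing remainder (compact on $L^2(\Gamma)$ after bordering with $\cN^{\frac12}$, by exactly the mapping chain $L^2\to H^{-\frac12}\to H^1(A_X)\to L^2(\Om)\to H^2\to H^{\frac32}\to H^1\hookrightarrow\hookrightarrow L^2$ the paper uses), and reduce the rest to large-order Hankel asymptotics for the diagonal operators; the paper merely organises the bookkeeping factor by factor ($N_1=I+K_1$, $N_2=$ multiple of $\cN$ plus compact, $\cN^{\frac12}\Min\cN^{\frac12}=X+K_3$) rather than through your $T_0(k)$. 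One small repair: the support of $D_\chi(k)S^Ag$ is away from $\partial U$ but \emph{not} away from $\Gamma$, so ``interior elliptic regularity'' is not what gives $H^2$ near $\Gamma$; you need elliptic regularity up to the smooth Neumann boundary portion $\Gamma$ (the paper implements this via $\gamma_\Gamma\chi R(k)=\gamma_\Gamma R(k)$ and $\chi R(k):L^2(\Om)\to H^2$). That is easily fixed.

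The step you yourself flag as the main obstacle is, however, a genuine gap, and with the formulas you actually invoke it comes out wrong. Using $H^{(1)}_\nu(kX)/A_\nu\to 1$ and $(H^{(1)}_\nu)'(kX)/A_\nu\sim -\nu/(kX)$ together with the literal definition of $N_2$, the diagonal entries of $T_0(k)=\tfrac12\bigl(N_1(k)+X\cN^{-1}N_2(k)\bigr)$ tend to $\tfrac12(1-\tfrac1k)$ --- a nontrivial $k$-dependent scalar, not $1$, so $T_0(k)-I$ would not even be compact. The limit $1$ only emerges once one uses the convention under which \eqref{eq:H12-op-resonance-cond} was derived, namely that $N_2(k)q=\partial_\nu|_\Gamma u_q(\cdot;k)$ in \eqref{eq:N1-N2-prop} is the normal derivative appearing in \eqref{eq:BVP-out-Min} (normal taken for $\Omout$), so that differentiating $H^{(1)}_{|\alpha|}(k|x|)$ in $|x|$ contributes the chain-rule factor $k$ and the orientation fixes the sign: the principal part of $N_2$ is then a multiple of $\cN/X$ that cancels the principal part $X\cN^{-1}$ of the annulus NtD with a $+$ sign, giving $\tfrac12(1+1)=1$. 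This is precisely the content of the paper's appendix computation: it writes $N_2(k)_{\alpha\alpha}=A_{|\alpha|}^{-1}\bigl(kH^{(1)}_{|\alpha|-1}(kX)-\tfrac{|\alpha|}{X}H^{(1)}_{|\alpha|}(kX)\bigr)$ as in \eqref{eq:LN2_besselFormula}, shows the first term is asymptotically negligible, and then multiplies the resulting principal parts against $\cN^{\frac12}\Min(k)\cN^{\frac12}=X+K_3(k)$. Since the whole point of the lemma (and of the prefactor $\tfrac12$ and the normalisation $A_\nu$) is that the principal symbol is exactly $I$, your proposal is incomplete until this sign/chain-rule bookkeeping is carried out; everything else --- compactness of the remainders, analyticity and local uniform boundedness coming from $R(k)$, $D_\chi(k)$ and the non-vanishing of $A_\nu$ on $\C_-$ --- is correct and coincides with the paper's argument.
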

\begin{proof}
	Let $k\in\C_-$. Recalling the definitions of $N_1$ and $N_2$ we have
	\begin{equation*}
 		N_1(k)_{\alpha\alpha} := \frac{H^{(1)}_{|\alpha|}(kX)}{A_{|\alpha|}(k)}, \qquad 
 		N_2(k)_{\alpha\alpha} := \frac{(H_{|\alpha|}^{(1)})'(kX)}{A_{|\alpha|}(k)}
	\end{equation*}
	and $N_i(k)_{\alpha\beta}=0$ otherwise, where $A_\nu := -i \sqrt{\frac{2}{\pi \nu}} \br*{\frac{e k X}{2 \nu}}^{-\nu}$.
	By well-known properties for Bessel functions (cf. \eqref{eq:Hankel-asym}) we have
	\begin{align}\label{eq:N1_asymptoticFormula}
		\lim_{|\alpha|\to\infty} N_1(k)_{\alpha\alpha} &= 1
	\end{align}
	and hence
	\begin{align}
		N_1(k) = I + K_1(k),
	\end{align}
	where $K_1(k) = \diag\left(\f{H^{(1)}_{|\alpha|}(kX)}{A_{|\alpha|}(k)} - 1, \alpha\in\Z\right)$ is compact. Similarly, for $N_2$ we have
	\begin{align}\label{eq:LN2_besselFormula}
		N_2(k)_{\alpha\alpha} &= A_{|\alpha|}(k)^{-1}\bigg( k H_{|\alpha|-1}^{(1)}(kX) - \f{|\alpha|}{X} H_{|\alpha|}^{(1)}(kX) \bigg),
	\end{align}
	where we have used the general formula $\f{d}{dz}H^{(1)}_\nu(z) = H_{\nu-1}^{(1)}(z) - \f{\nu}{z}H_{\nu}^{(1)}(z)$. The first term on the right-hand side of \eqref{eq:LN2_besselFormula} behaves like 
	\begin{align*}
		 \frac{k H_{|\alpha|-1}^{(1)}(kX)}{A_{|\alpha|}(k)} &\sim c\f{(|\alpha|-1)^{|\alpha|-1}}{|\alpha|^{|\alpha|}}
		 \\
		 &\xrightarrow{|\alpha|\to\infty} 0
	\end{align*}
	for a suitable constant $c$. Hence, comparing to \eqref{eq:LN2_besselFormula} we obtain
	\begin{align}\label{eq:LN2_asymptoticFormula}
		N_2(k) &= -X^{-1}\cN + K_2(k),
	\end{align}
	where $K_2(k) = \diag\left(\frac{k H_{|\alpha|-1}^{(1)}(kX)}{A_{|\alpha|}(k)} , \alpha\in\Z\right)$ is compact. Collecting results, we have shown
	\begin{align}
		\cN^{\f12}N_1(k)\cN^{-\f12} &= I + K_1(k)
		\label{eq:N1_finalFormula}
		\\
		N_2(k)\cN^{-\f12} &= -X^{-1}\cN^{\f12} + \cN^{-\f12}K_2(k).
		\label{eq:N2_finalFormula}
	\end{align}
	It remains to control $\Min(k)$. Combining Lemmas \ref{lem:S-decomp} and \ref{lem:SA-decomp} we have
	\begin{align*}
		\Min(k) &= \gamma_\Gamma S(k)
		\\
		&= \gamma_\Gamma (\chi S^A + R(k)D_\chi(k)S^A)
	\end{align*}
	and 
	\begin{align*}
		\cN^{\f12}\Min(k)\cN^{\f12} &= \cN^{\f12} \gamma_\Gamma \chi S^A \cN^{\f12} + \cN^{\f12} \gamma_\Gamma R(k)D_\chi(k)S^A \cN^{\f12}
		\\
		&= \cN^{\f12} \gamma_\Gamma S^A \cN^{\f12} + \cN^{\f12} \gamma_\Gamma R(k)D_\chi(k)S^A \cN^{\f12}
		\\
		&= X+K^A + \cN^{\f12} \gamma_\Gamma R(k)D_\chi(k)S^A \cN^{\f12}.
	\end{align*}
	We simplify notation by writing
	\begin{align}\label{eq:Min_decompFormula}
		\cN^{\f12}\Min(k)\cN^{\f12} = X+K_3(k),
	\end{align}
	where $K_3(k) = K^A + \cN^{\f12} \gamma_\Gamma R(k)D_\chi(k)S^A \cN^{\f12}$. Combining \eqref{eq:Min_decompFormula} and \eqref{eq:N2_finalFormula} we obtain
	\begin{align}
		\cN^{\f12}\Min(k)N_2(k)\cN^{-\f12} &= \cN^{\f12}\Min(k)(-X^{-1}\cN^{\f12} + \cN^{-\f12}K_2(k))
		\nonumber
		\\
		&= -X^{-1}\cN^{\f12}\Min(k)\cN^{\f12} + \cN^{\f12}\Min(k)\cN^{-\f12}K_2(k)
		\nonumber
		\\
		&= \cN^{\f12}\Min(k)\cN^{\f12}(-X^{-1} + \cN^{-1}K_2(k))
		\nonumber
		\\
		&= (X+K_3(k))(-X^{-1} + \cN^{-1}K_2(k))
		\nonumber
		\\
		&= I - X^{-1}K_3(k) + X\cN^{-1}K_2(k) + K_3(k)\cN^{-1}K_2(k)
		\label{eq:MinN2_finalFormula}
	\end{align}
	Combining \eqref{eq:MinN2_finalFormula} and \eqref{eq:N1_finalFormula} we obtain the final formula
	\begin{align}
		\frac{1}{2}\cN^{\f12} (N_1(k)+ \Min(k) N_2(k)) \cN^{-\f12}
		&= I + K(k),
	\end{align}
	where
	\begin{align}\label{eq:K(k)_def}
		2K(k) = K_1(k) - X^{-1}K_3(k) + X\cN^{-1}K_2(k) + K_3(k)\cN^{-1}K_2(k).
	\end{align}
	To prove the assertion, it remains to show that $K(k)$ is compact for every $k\in \C_-$ and that $K(k)$ is locally uniformly bounded on $L^2(\Gamma)$. 
	
	Local uniform boundedness follows from continuity of $H_\nu^{(1)}$, $D_\chi$ in $\C_-$ and the fact that $A_{\nu}$ has no zeros in $\C_-$. To prove compactness we consider each term in \eqref{eq:K(k)_def} separately. Compactness of $K_1$ is already established, thus we focus on $K_3$. $K^A$ is compact by Lemma \ref{lem:SA-decomp}, so it suffices to prove compactness of $\cN^{\f12} \gamma_\Gamma R(k)D_\chi(k)S^A \cN^{\f12}$. Employing Lemmas \ref{lem:S-decomp}, \ref{lem:SA-decomp} we have the following sequence of bounded operators
	\begin{align*}
		L^2(\Gamma) \xrightarrow{\cN^{\f12}} 
		H^{\f12}(\Gamma) \xrightarrow{S^A}
		H^1(A_X) \xrightarrow{D_\chi(k)}
		L^2(\Omega) \xrightarrow{\chi R(k)}
		H^2(\Omega) \xrightarrow{\gamma_\Gamma}
		H^{\f32}(\Gamma) \xrightarrow{\cN^{\f12}}
		H^1(\Gamma),
	\end{align*}
	where we have used the fact that $\gamma_\Gamma\chi R(k) = \gamma_\Gamma R(k)$. We conclude that the range of $\cN^{\f12} \gamma_\Gamma R(k)D_\chi(k)S^A \cN^{\f12}$ is compactly embedded in $L^2(\Gamma)$, proving compactness of the operator and of $K_3(k)$. 
	
	It only remains to prove compactness of $\cN^{-1}K_2(k)$. However, this is trivial, since $K_2(k)$ is bounded and $\cN^{-1}$ is compact.
\end{proof}

%\nocite{*}
\bibliography{mybib}
\bibliographystyle{abbrv}

\end{document}